\documentclass{birkjour}
\usepackage{amssymb,amsthm,amsfonts,amsmath}

\newcommand{\cS}{{\mathcal{S}}}

\newcommand{\cN}{{\mathcal{N}}}
\newcommand{\cC}{{\mathcal{C}}}
\newcommand{\cD}{{\mathcal{D}}}

\newcommand{\Hh}{{\mathcal{H}}}
\newcommand{\cG}{{\mathcal{G}}}
\newcommand{\cK}{{\mathcal{K}}}

\newcommand{\cR}{{\mathcal{R}}}

\newcommand{\E}{{E_{V_T^2}}}

\newcommand{\ns}{{\mathrm{nsa}}}
\newcommand{\sa}{{\mathrm{sa}}}
\newcommand{\anti}{{\mathrm{anti}}}
\newcommand{\Imt}{{\mathrm{Im}}}
\newcommand{\sign }{{\mathrm{sign}}}
\newcommand{\nein}[1]{}

\newtheorem{tht}{Theorem}
\newtheorem{thd}{Definition}
\newtheorem{thl}[tht]{Lemma}
\newtheorem{thp}[tht]{Proposition}

\theoremstyle{plain}

\newtheorem{thex}{Example}
\newtheorem{rem}{Remark}
\newcommand{\ov}{\overline}

\newcommand{\dT}{\mathbb{T}}

\newcommand{\dR}{\mathbb{R}}
\newcommand{\dN}{\mathbb{N}}
\newcommand{\dC}{\mathbb{C}}

\newcommand{\io}{\iota}

\begin{document}

\begin{title}
{Antilinear Normal Operators  on Hilbert Space}
\end{title}
\author{Konrad Schm\"udgen}
\address{University of Leipzig, Mathematical Institute, Augustusplatz 10, D-04109 Leipzig, Germany}
\email{schmuedgen@math.uni-leipzig.de}

\begin{abstract}
An operator $A$ on a complex Hilbert space $\Hh$ is called antilinear if $A(x+y)=Ax+Ay$ and $A(\lambda x)=\ov{\lambda} Ax$ for $x,y\in \cD(A)$ and $\lambda\in \dC$. We investigate some classes of densely defined antilinear unbounded operators, especially antilinear normal operators. We give various characterizations of antilinear normal operators and study a class of such operators in detail. Our main result is a structure theorem for  unbounded antilinear normal  operators.
\end{abstract}
\maketitle

\textbf{AMS  Subject  Classification (2020)}.
 47B02, 47 B15.\\

\textbf{Key  words:} conjugation, antilinear operator, antilinear normal operator

\section {Introduction}

The theory of bounded or unbounded linear operators on Hilbert space is well developed and there exists a huge quantity of research papers and books on linear  operators.  In constrast,  conjugate-linear operators, or equivalently, antilinear operators (that is,  operators $T$ satisfying $$T(\alpha x+\beta y)=\ov{\alpha}\, T(x)+\ov{\beta}\, T(y) \quad \textrm{for} \quad\alpha, \beta\in \dC, \, x, y\in \cD(T))$$ are rarely investigated in the literature. 

Antilinear operators occur in the study of unbounded $C$-symmetric operators and  $C$-self-adjoint operators.
A  densely defined linear operator $T$ on a Hilbert space $\Hh$ is called $C$-symmetric (resp. $C$-self-adjoint) with respect to a conjugation $C$ on $\Hh$ if $CTC\subseteq T^*$ (resp.  $CTC= T^*).$ For such operators, the {\it antilinear} operators $TC$ and $T^*C$ play an important role (see \cite{knowles}, \cite{race}, \cite{garcia}, \cite{arlinski}). In the finite dimensional case,  $C$-self-adjoint  operators can be described by  complex symmetric matrices, a  well-studied class of matrices   (see e.g. \cite{fassbender},  \cite[Chapter 11]{gantmacher}).

Antilinear unitary operators appear in quantum physics since the work of E. Wigner. By Wigner's theorem \cite{wigner31}, each symmetry of the set of  pure states of a quantum  mechanical system is given  by a unitary or by an antiunitary operator. Wigner \cite{wigner} also classified the antiunitary operators $A$ for which the unitary operator $A^2$ has a complete set of eigenvectors.  Further, the $\mathcal{P}\mathcal{T}$- symmetry operator in quantum mechanics is an antilinear operator (see e.g. \cite{sw}).

 The subject of  the present paper are densely defined closed antilinear unbounded operators on general complex Hilbert spaces. Our particular emphasize is on the structure of unbounded antilinear normal operators.

Let us  describe the contents of this paper. In Section
\ref{antilinearoperators}, we develop basic definitions on antilinear operators. For a densely defined antilinear operator $T$  its adjoint $T^\dagger$ is defined by 
$$
\langle Tx,y\rangle= \langle T^\dagger y, x\rangle \quad \textrm{for}\quad x\in \cD(T), y\in \cD(T^\dagger ).
$$
This is  Wigner's classical definition \cite{wigner}. We show that for each densely defined closed antilinear operator $T$ the linear operator $T^\dagger T$ is self-adjoint and  positive, so the definition\, $|T|:=\sqrt{T^\dagger T}$\, of the absolute value of $T$ makes sense. 

In Section \ref{polarclasses},  the  polar decompositon of densely defined closed antilinear  operators is obtained (Theorem \ref{polarde}) and we introduce some natural classes of antilinear operators.
 As in  the linear  case, we define that $T$ is self-adjoint if $T=T^\dagger$ and $T$ is normal if $T^\dagger T=TT^\dagger $. Antilinear normal operators are closely related to $C$-normal operators as defined in \cite{ptak}, see Remark \ref{Cnormal}. A densely defined closed antilinear operator $T$  if self-adjoint if and only if $T=C|T|=|T| C$ for some conjugation $C$ (Theorem \ref{charselfadjoint}).

In Section \ref{gennormal}, we give some basic characterizations of antilinear normal operators. It is shown that an antilinear operator $T$ is normal if and only if  there exists a conjugation $C$ such the product $TC$ is a linear normal operator (Proposition \ref{tnormalconjugation1} and Theorem \ref{tnormalconjugation}).  Further, a densely defined closed antilinear operator $T$ is normal if and only if there exists an antilinear unitary operator $V$ such that $T=V |T|=|T|V$ and $T^\dagger =V^\dagger |T|=|T|V^\dagger $ (Theorem \ref{charnpolar}).

 Section \ref{2model} is devoted to 
 antilinear normal operators on the two-dimensional Hilbert space $\dC^2$. This simple case illustrates the general case and it delimits possible results.

In Section \ref{nmodel}, we study an important model of antilinear normals. Let $N_1$ and $N_2$ be normal linear operators on Hilbert spaces $\Hh_1$ and $\Hh_2$, respectively, and define a normal linear operator $\cN$ on  $\Hh:=\Hh_1\oplus \Hh_2$ by the operator matrix \begin{gather}
\cN=\left(
\begin{array}{ll}
N_1 & ~ 0 \\
~0 &  N_2
\end{array}\right).
\end{gather}
Further, 
let  $\cC$ be a conjugation of the Hilbert space $\Hh_1\oplus \Hh_2$ of the form
\begin{gather} 
\cC=\left(
\begin{array}{ll}
 ~0 & C_2\\
C_1 &  ~0 
\end{array}\right)
\end{gather}
such that $\cN\cC=\cC\cN$. Then the operator matrix
\begin{gather} 
T:=\cN\, \cC=\left(
\begin{array}{ll}
~~~ 0 & N_1C_2 \\
N_2C_1 &  ~~~ 0
\end{array}\right).
\end{gather} 
defines an antilinear normal operator $T$ on $\Hh=\Hh_1\oplus \Hh_2$. Such  operators $T$ are  investigated in Section \ref{nmodel} in detail. 

The main result of this paper is Theorem \ref{normalform} which describes the structure of antilinear normal operators. It is stated and discussed in  Section \ref{structure} and proved in Section \ref{proof}. According to this result  each antilinear normal operator $T$ decomposes as an orthogonal direct sum of an antilinear self-adjoint operator $T_\sa$ and an antilinear completely non-self-adjoint operator $T_\ns$. The non-self-adjoint part $T_\ns$ is of the form of  operators  treated in Section \ref{nmodel}. In particular, the results obtained in Section \ref{nmodel} provide criteria for the unitary equivalence and the irreducibility of such operators $T_\ns$. 

In the  finite-dimensional case, antilinear normal operators were studied and their structure was classified by  F. Herbut and M. Vujicic \cite{herbut} adapting Wigner's description of antiunitaries. As note above, 
the present paper is about antilinear operators on general Hilbert spaces including  in particular unbounded operators.
\smallskip

Let us fix some notational conventions that will be used throughout this paper.
All Hilbert spaces  are complex and their scalar products are linear in the first variables and conjugate-linear in the second. The symbols $\Hh$, $\Hh_1$, $\Hh_2$ always stand for Hilbert  spaces. For an operator $T$, its domain, its kernel and its range are denoted by $\cD(T)$, $\cN(T)$ and $\cR(T)$, respectively.
The spectral measure  of a normal or self-adjoint linear operator $N$ is denoted by $E_N(\cdot)$.

For an antilinear operator $A$ on $\Hh$ we define  the resolvent set $\rho(A)$ to be the set of numbers $\lambda$ for which the operator $A-\lambda I$ has a bounded inverse $(A-\lambda I)^{-1}$ defined on $\Hh$. Note that $(A-\lambda I)^{-1}$ is then a {\it real-linear} operator (\cite{huhtanen}), that is, 
$$
(A-\lambda I)^{-1}(\alpha x+\beta y)=\alpha\, (A-\lambda I)^{-1}x +\beta\, (A-\lambda I)^{-1}y \quad \textrm{for} ~\alpha, \beta\in \dR, \,x, y\in \Hh.$$ 
As in the linear case, the spectrum $\sigma(A)$ is the complement set $\dC\backslash \rho(A).$

For a subset $M$ of $\dC$ let $\ov{M}$ denote the closure of $M$ and $M^c:=\{\ov{z}: z\in M\}$  the complex conjugate of $M$.

\section{Antilinear  Operators}\label{antilinearoperators}

\begin{thd}
Let $T$ be mapping of a Hilbert space $\Hh_1$ into a Hilbert space $\Hh_2$ defined on a linear subspace $\cD(T)$ of $\Hh_1$,  the \emph{domain} of $T$. Then $T$  is called a \emph{conjugate-linear operator}, or an \emph{anilinear operator}, if
\begin{align*}
T(\alpha x+\beta y)=\ov{\alpha}\, T(x)+\ov{\beta}\, T(y)\quad \textrm{for} ~~~\alpha, \beta\in \dC,~ x,y\in \cD(T).
\end{align*} 
\end{thd}
In this paper, we prefer to speak about {\it antilinear} operators rather than {\it conjugate-linear} operators.

Let $\Hh$ be a Hilbert space with scalar product $\langle\cdot,\cdot\rangle_\Hh$. We define another Hilbert space $\Hh^\anti$ as follows. The sets of $\Hh$ and $\Hh^\anti$ are the same  and also the additions of vectors. The complex multiple $\lambda \circ x$ of $x\in\Hh$ in $\Hh^\anti$ is equal to the multiple $\ov{\lambda} \, x$ in $\Hh$ and the scalar product of $\Hh^\anti $ is given by 
\begin{align*}\langle x,y\rangle_{\Hh^\anti}=\langle y,x\rangle_\Hh,\quad x,y \in \Hh.
\end{align*}
To avoid confusion, for $\lambda \in\dC$ and $x\in \Hh$, the symbol   $\lambda \circ x$ means the complex multiple of $x$ in $\Hh^\anti$ and $\lambda  x$ denotes the complex multiple
 of $x$ in $\Hh$.  

We verify (for instance) linearity and conjugate-linearity of the new scalar product $ \langle \cdot,\cdot \rangle_{\Hh^\anti}$: For $\lambda, \mu\in \dC$ and $x,y\in \Hh$, we derive
\begin{align*}
\langle \lambda \circ x,\mu\circ y\rangle_{\Hh^\anti}=\langle \ov{\lambda}\, x,\ov{\mu}\, y\rangle_{\Hh^\anti}=\langle \ov{\mu}\, y,\ov{\lambda}\, x\rangle_\Hh =\ov{\mu}\, \lambda\,   \langle y,  x\rangle_\Hh =\lambda\, \ov{\mu}
  \langle x,y\rangle_{\Hh^\anti}.
\end{align*}
Note that $(\Hh^\anti)^\anti$ is just the Hilbert space $\Hh$.
The embedding of $x\in \Hh$ into $ \Hh^\anti$ is denoted by $\iota(x)$. In particular, for  $y\in \Hh^\anti$, $\iota(y)$  denotes the embedding of $y$ into $\Hh=(\Hh^\anti)^\anti$.

Let $T$ be a mapping of a Hilbert space $\Hh_1$ into a Hilbert space $\Hh_2$. Clearly, $T:\Hh_1\to \Hh_2$ is an antilinear operator if and only if $T:\Hh_1^\anti\to \Hh_2$ is a linear  operator, or equivalently, $T:\Hh_1\to \Hh_2^\anti$ is a linear operator. This enables us to  study  antilinear operators by means of linear operators.

For  mappings $T:\Hh_1\to \Hh_2$ and $S:\Hh^\anti_1\to \Hh_2$ we define  mappings $j(T):\Hh_1^\anti\to \Hh_2$ and $j(S):\Hh_1\to \Hh_2$  by
\begin{align*}
j(T)(y):&= T(\io(y)), \quad y\in \Hh_1^\anti,\\ j(S)(x):&= S(\iota(x)), \quad x\in\Hh_1.
\end{align*}
Notions such as closedness, boundedness or density of domains are the same for $T$ and $j(T)$ and for $S$ and $j(S)$, because $\Hh_1$ and $\Hh_1^\anti$ have the same norms.

Let us recall the definition of the adjoint of a densely defined linear operator $S:\Hh_1\to \Hh_2$. The domain of $S^*$ is
$$
\cD(S^\ast){=}\{ y \in \Hh_2:\text{There exists $u \in \Hh_1$ such that $\langle Sx, y \rangle_2 {=} \langle x,u \rangle_1$ for $ x \in \cD(S)$} \}.
$$
Since $\cD(S)$ is dense in $\Hh_1$, then the vector $u \in \Hh_1$  is uniquely determined by $y$. Setting  $S^\ast y= u$, $S^*$ becomes   a  linear operator $S^\ast:\Hh_2\to \Hh_1$ such that
\begin{align}\label{tstar}
\langle Sx,y \rangle_2 = \langle x, S^\ast y \rangle_1 \quad\text{for}~~  x \in \cD(S),~~ y \in \cD(S^\ast).
\end{align}

Now suppose that $T$ is a densely defined {\it antilinear} operator of $\Hh_1$ into $\Hh_2$. Then $j(T):\Hh_1^\anti\to\Hh_2$ is a {\it linear} operator with dense domain in $\Hh_1^\anti$,  so  its  adjoint $j(T)^*:\Hh_2\to \Hh_1^\anti$ is a linear operator which is defined by the preceding. We define the adjoint $T^\dagger$ of $T$ by
$$
T^\dagger y:=\io(j(T)^*y), \quad y\in \Hh_2.
$$
This means that
\begin{align*}
\cD(T^\dagger){=}\{ y \in \Hh_2:&\text{There exists  $u \in \Hh_1$ such that $\langle Tx, y \rangle_2 {=} \langle u,x \rangle_1$ for $ x \in \cD(T)$} \}, \\ & \quad\quad\quad\quad \quad T^\dagger y=u, \quad y\in \cD(T^\dagger),
\end{align*}
Since $j(T)^*:  \Hh_2\to \Hh_1^\anti$ is linear,  $T^\dagger=\io\circ j(T)^*:\Hh_2\to \Hh_1$ is an antilinear operator and equation (\ref{tstar}) reads as
\begin{align}\label{tstar1}
\langle Tx,y \rangle_2 = \langle T^\dagger y,x \rangle_1 \quad\text{for}~~  x \in \cD(T),~~ y \in \cD(T^\dagger).
\end{align}
\begin{thd}
The antilinear operator $T^\dagger$ defined above is called the {\em adjoint operator} of the antilinear operator $T$.
\end{thd}
Note that (\ref{tstar1}) coincides with the definition of the adjoint  of an antilinear operator according to Wigner \cite{wigner}. The preceding approach to the definition of $T^\dagger$ can be considered as a mathematical justification of Wigner's definition.

As usual, the adjoint of a linear operator $S$ is denoted by $S^*$.
Note that for an antilinear operator $T$ and a linear operator $S$ we have $(cT)^\dagger= c\, T^\dagger$ and $(cS)^*=\ov{c}\, S^*$ for $c\in \dC$, $c\neq 0.$

If  $A$ is a linear operator and $S$ and $T$ antilinear operators, then $AT$, $TA$ are antilinear,  $ST$ is linear and 
\begin{align*}
T^\dagger A^*\subseteq (AT)^\dagger,~~~ A^*T^\dagger \subseteq (TA)^\dagger,~~~ S^\dagger T^\dagger \subseteq (TS)^*.
\end{align*}

Let $A:\Hh_1\to \Hh_2$ be a densely defined antilinear operator. Then
\begin{align*}
j(A)^*j(A):\Hh_1^\anti\to \Hh_1^\anti, \quad
j(A)j(A)^*:\Hh_2\to \Hh_2.
\end{align*}
From the corresponding definitions we obtain
\begin{align}\label{adaggera}
A^\dagger A=\io\circ (j(A)^* j(A))\circ\io
\end{align}
\begin{thp}\label{adag}
Suppose that $A$ is a densely defined closed antilinear operator of $\Hh_1$ into $\Hh_2$. Then $A^\dagger A$ and $AA^\dagger $ are  positive self-adjoint operators on $\Hh_1$ and $\Hh_2$, respectively. Moreover, $\cD(A^\dagger A)$ is a core for the operator $A$.
\end{thp}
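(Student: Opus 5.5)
The plan is to reduce everything to von Neumann's classical theorem for linear operators via the map $j$. Since $A$ is densely defined and closed from $\Hh_1$ to $\Hh_2$, the linear operator $j(A):\Hh_1^\anti\to\Hh_2$ is densely defined and closed. The underlying sets and norms of $\Hh_1$ and $\Hh_1^\anti$ coincide, and closedness of the graph does not depend on the scalar multiplication. By von Neumann's theorem, $j(A)^*j(A)$ is then a positive self-adjoint operator on $\Hh_1^\anti$. Likewise $j(A)j(A)^*$ is positive self-adjoint on $\Hh_2$, and $\cD(j(A)^*j(A))$ is a core for $j(A)$.

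Next I transfer these three facts back.

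\emph{First,} by (\ref{adaggera}), $A^\dagger A=\io\circ(j(A)^*j(A))\circ\io$, which is $j(A)^*j(A)$ regarded as a map on the same underlying set. So I must check that if $S$ is a linear self-adjoint positive operator on $\Hh^\anti$, then the same set map $S$ is linear, self-adjoint and positive on $\Hh$.
\begin{itemize}
\item \emph{Linearity.} $S(\ov{\lambda}x)=S(\lambda\circ x)=\lambda\circ Sx=\ov{\lambda}Sx$.
\item \emph{Adjoint.} $\langle Sx,y\rangle_{\Hh}=\langle y,Sx\rangle_{\Hh^\anti}$, and the domain of the adjoint is defined by the same relation with the roles of the arguments swapped. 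Hence the adjoint of $S$ in $\Hh$ equals the adjoint of $S$ in $\Hh^\anti$, with the same domain, so $S$ is self-adjoint on $\Hh$.
\item \emph{Positivity.} $\langle Sx,x\rangle_{\Hh}=\langle x,Sx\rangle_{\Hh^\anti}=\ov{\langle Sx,x\rangle_{\Hh^\anti}}\ge0$.
\end{itemize}

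\emph{Second,} for $AA^\dagger$ I check directly that $AA^\dagger=j(A)\circ\io\circ\io\circ j(A)^*=j(A)j(A)^*$ as maps on $\Hh_2$, including domains. The domain of $A^\dagger$ equals that of $j(A)^*$, and $\io$ only relabels vectors. Alternatively, one can verify that $(A^\dagger)^\dagger=A$ and apply the first part to $A^\dagger$.

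\emph{Third,} the core statement holds because cores are defined through graph-norm density. The graph norms of $A$ and $j(A)$ coincide, and $\cD(A^\dagger A)=\cD(j(A)^*j(A))$ as sets.

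The only real obstacle is careful bookkeeping: making sure the identifications via $\io$ preserve domains and that the adjoint computed in $\Hh^\anti$ agrees with the adjoint computed in $\Hh$. No analytic difficulty beyond von Neumann's theorem arises.
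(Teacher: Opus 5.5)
Your proof is correct. Like the paper's, it rests on von Neumann's theorem for the closed densely defined linear operator $j(A)\colon\Hh_1^\anti\to\Hh_2$ and on formula (\ref{adaggera}). The difference is in how self-adjointness is carried back to $\Hh_1$.

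The paper proceeds in several separate steps:
\begin{itemize}
\item it checks symmetry and positivity of $A^\dagger A$ directly from Wigner's relation (\ref{tstar1});
\item it obtains closedness from (\ref{adaggera});
\item it proves self-adjointness by the criterion that a closed positive symmetric operator is self-adjoint once $\cR(A^\dagger A+\lambda I)$ is dense for $\lambda>0$, transferring the orthogonality condition to $\Hh_1^\anti$.
\end{itemize}

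You instead show once that the adjoint of a densely defined linear map $S$ is the same whether computed in $\Hh$ or in $\Hh^\anti$. Your wording there is a bit loose; the precise point is that the defining identity in $\Hh$ is the complex conjugate of the one in $\Hh^\anti$. This transfers self-adjointness and positivity in one step and makes the separate symmetry, closedness and range-density checks unnecessary.

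You also note that $AA^\dagger=j(A)j(A)^*$ exactly, including domains, as operators on $\Hh_2$. So for the second operator no transfer is needed at all, whereas the paper only says this case is ``similar''. The core statement is handled identically in both proofs.

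Your route is shorter, and its lemma is reusable whenever one has to pass between $\Hh$ and $\Hh^\anti$. The paper's route keeps the computations in $\Hh_1$ and shows symmetry and positivity directly in terms of $A$ and $A^\dagger$.
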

\begin{proof} 

We carry out the proof for $A^\dagger A$; the case of $AA^\dagger $ is similar. 

For $x,y \in \cD(A^\dagger A)$, using (\ref{tstar1}) twice we compute
$$\langle A^\dagger Ax,y\rangle=\langle Ay, Ax\rangle =\langle x, A^\dagger Ay\rangle.$$
Hence $A^\dagger A$ is symmetric. Since $\langle A^\dagger A x,x\rangle=\langle Ax,Ax\rangle\geq 0$, $A^\dagger A$ is positive.

Since $\|Ax\|=\|j (A)\io(x)\|$ for $x\in \cD(A)$ and $A$ is closed  by assumption, the linear operator $j(A)$ of $\Hh_1^\anti$ into $\Hh_2$ is closed. Hence $j(A)^*j(A)$ is positve self-adjoint linear operator  on the Hilbert space $\Hh_1^\anti$ (see e.g. \cite[Proposition 3.18(ii)]{schm12}). In particular it follows therefore from (\ref{adaggera}) that $A^\dagger A$ is closed.

Since $A^\dagger A$ is a closed positive symmetric operator, to prove its self-adjointness it suffices to show that $\cR(A^\dagger A+\lambda I)$ is dense in $\Hh_1$ for $\lambda>0$. Assume that $y\in \Hh_1$ and $y\perp \cR(A^\dagger A+\lambda I)$. Then, for $x\in \cD(A^\dagger A)$,
\begin{align*}
0=&\langle y, (A^\dagger A +\lambda)x\rangle_{\Hh_1}= \langle y, \io\circ j(A)^*j(A)\io(x)\rangle_{\Hh_1}+ \langle y, \lambda x\rangle_{\Hh_1}\\ &=
 \langle j(A)^*j(A)\io(x), \io(y)\rangle_{\Hh_1^\anti}+\langle \io(\lambda x),\io(y)\rangle_{\Hh_1^\anti}\\ &=
\langle ( j(A)^*j(A)+\lambda)\io(x), \io(y)\rangle_{\Hh_1^\anti}.
\end{align*}
As noted above, $j(A)^*j(A)$ is positive self-adjoint operator on $\Hh_1^\anti$. Therefore, the preceding implies that $\io(y)=0$. Thus $y=0$. Hence  $\cR(A^\dagger A+\lambda I)$ is dense and $A^\dagger A$ is self-adjoint.

For the  linear operator $j(A):\Hh^\anti_1\to \Hh_2$ it is well-known that $\cD(j(A)^*  j(A))$ is a core for $j(A)$, see e.e. \cite[Proposition 3.18]{schm12}. Since the norms of $\Hh_1^\anti$ and $\Hh_1$ are the same, it follows from (\ref{adaggera}) that $\cD(A^\dagger A)$ is a core for  $A$.
\end{proof}

One could  try to study antilinear operators by passing to linear operators on the Hilbert space $\Hh^\anti$ as done in the   proof of Proposition \ref{adag}.  This has the disadvantage that an antilinear operator $T:\Hh\to \Hh$ becomes a linear operator of $\Hh$ into another Hilbert space $\Hh^\anti$.
We will use another way and try to express an antilinear operator as a  product of some appropriate linear operator and some conjugation.

\section{Polar Decomposition and Classes of Operators}\label{polarclasses}
For a densely defined closed linear operator $T$ its absolute value  $|T|$ is the unique positive square root $\sqrt{T^*T}$ of the positive self-adjoint operator $T^*T$.
\begin{thd}
Let $A$ be a densely defined closed antilinear operator  from $\Hh_1$ into $\Hh_2$.  The \emph{absolute value} $|A|$ of $A$ is the positive square root of the linear positive self-adjoint operator  $A^\dagger A$ (by Proposition \ref{adag}):
\begin{align*} 
|A|:=\sqrt{A^\dagger A}.
\end{align*}
\end{thd}

As in the linear case we have the following fact.
\begin{thl}Suppose  $A$ is a  densely defined closed antilinear operator. Then
\begin{align}\label{danorm}
  \cD(A)=\cD(|A|)\quad \text{and} \quad \|Ax\|=\| \,|A|\, x\| ~~~ \text{for}~~~ x\in \cD(A).
\end{align}
\end{thl}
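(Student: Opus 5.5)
The plan is to reduce the statement to the linear case via the map $j$, in the same way as the proof of Proposition \ref{adag}. Since $A$ is densely defined and closed, $j(A):\Hh_1^\anti\to\Hh_2$ is a densely defined closed linear operator. For such operators the standard fact (see e.g. \cite{schm12}) gives
\[
\cD(j(A))=\cD\bigl(|j(A)|\bigr),\qquad \|j(A)y\|=\bigl\|\,|j(A)|\,y\bigr\| \quad\text{for } y\in\cD(j(A)),
\]
where $|j(A)|=\sqrt{j(A)^*j(A)}$ is taken in the Hilbert space $\Hh_1^\anti$. It remains to identify $|A|$ with $|j(A)|$ transported back to $\Hh_1$, that is, to show $|A|=\io\circ|j(A)|\circ\io$.

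\textbf{Step 1: transport of self-adjoint operators.} For a linear operator $B$ on $\Hh_1^\anti$ set $\widetilde B:=\io\circ B\circ\io$, which is an operator on $\Hh_1$. Because $\io$ is additive and $\io(\lambda x)=\ov{\lambda}\circ\io(x)$, the operator $\widetilde B$ is again complex linear, and $\cD(\widetilde B)=\cD(B)$ as sets. The scalar products satisfy $\langle\widetilde Bx,y\rangle_{\Hh_1}=\langle\io(y),B\io(x)\rangle_{\Hh_1^\anti}$. Hence $\widetilde B$ is self-adjoint, respectively positive, if and only if $B$ is. Also $\widetilde{BC}=\widetilde B\,\widetilde C$, since $\io\circ\io=\mathrm{id}$, and domains are preserved.

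\textbf{Step 2: uniqueness of the square root.} Apply Step 1 to $B=|j(A)|$. Then $\widetilde{|j(A)|}$ is a positive self-adjoint operator on $\Hh_1$, and its square is
\[
\widetilde{|j(A)|}^{\,2}=\widetilde{j(A)^*j(A)}=A^\dagger A
\]
by (\ref{adaggera}). By uniqueness of the positive square root of the positive self-adjoint operator $A^\dagger A$ (Proposition \ref{adag}), we conclude $|A|=\io\circ|j(A)|\circ\io$.

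\textbf{Step 3: conclusion.} We have $\cD(|A|)=\cD(|j(A)|)=\cD(j(A))=\cD(A)$ as sets. For $x\in\cD(A)$, since $\io$ preserves norms,
\[
\|Ax\|=\|j(A)\io(x)\|=\bigl\|\,|j(A)|\io(x)\bigr\|=\bigl\|\,|A|\,x\bigr\|.
\]

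\textbf{Main obstacle.} The delicate point is Step 1: one must check that the transport $\io\circ B\circ\io$ preserves linearity, self-adjointness and positivity, keeping careful track of the conjugated scalar multiplication and the swapped inner product in $\Hh_1^\anti$. The rest is bookkeeping.

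\textbf{Alternative route.} One can avoid transport entirely. For $x\in\cD(A^\dagger A)=\cD(|A|^2)$ we have $\|Ax\|^2=\langle A^\dagger Ax,x\rangle=\||A|x\|^2$. Since $\cD(A^\dagger A)$ is a core both for $A$ (Proposition \ref{adag}) and for $|A|$, and both operators are closed, the graph norms agree on this common core. Their closures therefore have the same domain and the norm identity persists. I would mention this as the fallback argument.
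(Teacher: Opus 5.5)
Your proof is correct. Your main route differs from the paper's, while your ``alternative route'' is exactly the paper's proof. The paper verifies $\|Ax\|^2=\langle A^\dagger Ax,x\rangle=\|\,|A|x\|^2$ on $\cD(A^\dagger A)=\cD(|A|^2)$. It then passes to the closures with respect to the graph norms, using that $\cD(A^\dagger A)$ is a core for $A$ (Proposition \ref{adag}) and $\cD(|A|^2)$ is a core for $|A|$.

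Your transport argument instead proves the structural identity $|A|=\io\circ|j(A)|\circ\io$ and then imports the linear lemma for $j(A):\Hh_1^\anti\to\Hh_2$. The real content of your Step 1 is that the identity $\langle\widetilde Bx,y\rangle_{\Hh_1}=\langle\io(y),B\io(x)\rangle_{\Hh_1^\anti}$ gives $(\widetilde B)^*=\widetilde{B^*}$. That is what ``self-adjoint iff'' rests on, so it is worth stating explicitly rather than only the symmetric case.

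Your route gives more than the lemma. The functional calculus and spectral projections of $|j(A)|$ carry over directly to $|A|$. It also makes the self-adjointness part of Proposition \ref{adag} immediate, since $A^\dagger A=\widetilde{j(A)^*j(A)}$ and hence $(A^\dagger A)^*=A^\dagger A$. Finally, it does not use the antilinear core assertion of Proposition \ref{adag}.

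The paper's route stays entirely in $\Hh_1$ and copies the linear proof verbatim. Its cost is relying on that core property, which the paper itself obtained by transport in Proposition \ref{adag}.
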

\begin{proof}
The proofs follows verbatim the same pattern as in the linear case (\cite[Lemma 7.1]{schm12}). For
$x\in \cD(A^\dagger A)= \cD(|A|^2)$, we have
\begin{align}\label{aanorms}
\|A x\|^2=\langle Ax,Ax\rangle=\langle A^\dagger Ax,x\rangle=\langle |A|^2x,x\rangle= \|\, |A|\, x\|^2.
\end{align}
By Proposition \ref{adag}, $\cD(A^\dagger A)$ is a core for $A$. Also, $\cD(|A|^2)$ is core for the self-adjoint operator $|A|$. Equation (\ref{aanorms}) shows that the graph norms of $A$ and $|A|$ coincide on $\cD(A^\dagger A)= \cD(|A|^2)$. Taking the closures in (\ref{aanorms}) with respect to the graph norms gives (\ref{danorm}).
\end{proof}
\begin{thd}
An antilinear bounded operator $U$ defined on a Hilbert space $\Hh$ is called an \emph{antilinear partial isometry} if\,  $\|Ux\|=\|x\|$ for $x\in \cN(U)^\bot$.
\end{thd}
That is, $U$ is an antilinear isometric map of  the closed subspace $\cK_1:=\cN(U)^\bot$,  the initial space of $U$, on the closed subspace $\cR(U)$, the final space of $U$, and $U$ is zero on the complement of $\cK_1$.

The following formula (\ref{polarformula})  is  the polar decomposition for antilinear operators.
\begin{tht}\label{polarde} Suppose  $A:\Hh_1\to \Hh_2$ is a densely defined closed antilinear operator. Then  there exists a unique antilinear partial isometry $U_A$ with initial space $\ov{\cR(|A|)}$  and final space $\ov{\cR(A)}$, called the \emph{phase operator} of $A$, such that
\begin{align}\label{polarformula}
A=U_A|A|.
\end{align}
Moreover,
\begin{align}\label{polarstar}(U_A)^\dagger = U_{A^\dagger}.
\end{align}
\end{tht}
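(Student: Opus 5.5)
The plan is to follow the linear proof directly, using the isometry identity (\ref{danorm}) from the preceding lemma. First I define $U_A$ on $\cR(|A|)$ by
\begin{align*}
U_A(|A|x):=Ax, \quad x\in \cD(A)=\cD(|A|).
\end{align*}
This is well defined and isometric because $\|Ax\|=\|\,|A|x\|$ by (\ref{danorm}). It is antilinear: $|A|$ is linear and $A$ is antilinear, so $U_A(\lambda |A|x)=U_A(|A|(\lambda x))=A(\lambda x)=\ov{\lambda}Ax$. Being isometric, it extends by continuity to an antilinear isometry of $\ov{\cR(|A|)}$ onto $\ov{\cR(A)}$. I then set $U_A:=0$ on $\ov{\cR(|A|)}^\perp=\cN(|A|)$. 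The resulting operator is an antilinear partial isometry with the required initial and final spaces, and $A=U_A|A|$ holds by construction on $\cD(A)=\cD(|A|)$. For uniqueness, suppose $A=U|A|$ with $U$ having initial space $\ov{\cR(|A|)}$. Then $U$ agrees with $U_A$ on $\cR(|A|)$, hence on its closure by continuity, and both vanish on the orthogonal complement.

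For (\ref{polarstar}) I first record the elementary facts about the adjoint of a bounded antilinear operator. The adjoint $U^\dagger$ is everywhere defined, and for a partial isometry $U$ the operator $U^\dagger$ is a partial isometry from $\cR(U)$ back to the initial space, with $U^\dagger U$ and $UU^\dagger$ the (linear) projections onto the initial and final spaces. I plan to prove these by passing to $j(U)$ and its linear adjoint, as in the proof of Proposition \ref{adag}, or by checking $\langle Ux,Uy\rangle=\langle y,x\rangle$ on the initial space via polarization. I also need $A^\dagger=|A|U_A^\dagger$. The inclusion $|A|U_A^\dagger\subseteq (U_A|A|)^\dagger$ holds by the listed product rule $A^*T^\dagger\subseteq (TA)^\dagger$. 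For equality, take $y\in\cD(A^\dagger)$ and compute
\begin{align*}
\langle |A|x, U_A^\dagger y\rangle=\langle U_A|A|x,y\rangle=\langle Ax,y\rangle=\langle A^\dagger y,x\rangle, \quad x\in\cD(|A|).
\end{align*}
This shows $U_A^\dagger y\in\cD(|A|^*)=\cD(|A|)$ with $|A|U_A^\dagger y=A^\dagger y$.

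Next I identify $|A^\dagger|$. From $A=U_A|A|$ I get $AA^\dagger=U_A|A|\,|A|U_A^\dagger=U_A|A|^2U_A^\dagger$. Consequently $W:=U_A|A|U_A^\dagger$ is a positive self-adjoint linear operator with $W^2=AA^\dagger$, which parallels the linear case. The verification uses that $U_A$ is an antiunitary from $\ov{\cR(|A|)}$ onto $\ov{\cR(A)}$, so conjugating by it preserves linearity, self-adjointness and positivity; positivity follows from $\langle U_AB U_A^\dagger y,y\rangle=\langle BU_A^\dagger y,U_A^\dagger y\rangle$ for linear $B$. By uniqueness of the positive square root, $|A^\dagger|=U_A|A|U_A^\dagger$. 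Then
\begin{align*}
U_A^\dagger|A^\dagger|=U_A^\dagger U_A|A|U_A^\dagger=|A|U_A^\dagger=A^\dagger,
\end{align*}
since $U_A^\dagger U_A$ is the projection onto $\ov{\cR(|A|)}$. Also $U_A^\dagger$ has initial space $\ov{\cR(A)}=\ov{\cR(|A^\dagger|)}$, where the equality holds because $\cN(A^\dagger)=\cR(A)^\perp$. Note that $A^\dagger$ is closed and densely defined, since $j(A)^*$ is. The uniqueness part of the theorem, applied to $A^\dagger$, then gives $U_{A^\dagger}=U_A^\dagger$.

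The main obstacle is this last step, particularly the domain bookkeeping in showing that $U_A|A|U_A^\dagger$ is self-adjoint and squares to $AA^\dagger$ with equal domains. If the direct approach becomes messy, I would instead transfer everything to the linear operator $j(A):\Hh_1^\anti\to\Hh_2$. Its polar decomposition $j(A)=V|j(A)|$ and the known linear identity $V^*=U_{j(A)^*}$ translate back through $\iota$ to $U_A=j^{-1}(V)$. Here I would use $|A|=\iota\circ|j(A)|\circ\iota$, which follows from (\ref{adaggera}) and the fact that conjugation by $\iota$ commutes with taking square roots.
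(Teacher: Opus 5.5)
Your proposal is correct and follows the paper's proof: you construct $U_A$ on $\cR(|A|)$ via (\ref{danorm}) in the same way, identify $|A^\dagger|=U_A|A|U_A^\dagger$ by uniqueness of the positive square root, and then apply uniqueness of the phase operator. Your arguments for $A^\dagger=|A|U_A^\dagger$ and for the initial space $\ov{\cR(A)}=\ov{\cR(|A^\dagger|)}$ fill in steps the paper only asserts, though by (\ref{tstar1}) the first equality in your display should read $\langle U_A|A|x,y\rangle=\langle U_A^\dagger y,|A|x\rangle$, i.e.\ $\langle |A|x,U_A^\dagger y\rangle=\langle x,A^\dagger y\rangle$, which still gives $|A|U_A^\dagger y=A^\dagger y$.
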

\begin{proof}
The proof proceeds as in the linear case; for the reader's convenience we sketch  the main reasoning. From (\ref{aanorms}) it follows that $U_A(|A|x)=Ax$ for $x\in \cD(A)= \cD(|A|)$ defines an isometric map of  $\cR(|A|)$  on $\cR(A)$. Since $A$ is antilinear and $|A|$ is linear, $U_A$ is antilinear. By continuity, $U_A$ extends to an isometric map of $\ov{\cR(|A|)}$  on $\ov{\cR(A)}$. Setting $U_Ay=0$ for
$y\in \cR(|A|)^\bot$, $U_A$ becomes an antilinear partial isometry. Clearly, the antilinear partial isometry $U_A$ with initial space $\ov{\cR(|A|)}$  and final space $\ov{\cR(A)}$ is uniquely determined by  (\ref{polarformula}).

We prove (\ref{polarstar}).
Set $T:=U_A|A|U_A^\dagger$. Using that $U_A^\dagger U_Ax=x$ for $x\in \ov{\cR(|A|)}$ and 
$A^\dagger=(U_A|A|)^\dagger =|A| U_A^\dagger$, we derive 
$$T^2=(U_A|A|U_A^\dagger)(U_A|A|U_A^\dagger))= U_A|A|^2U_A^\dagger =AA^\dagger.$$
The linear operator $T$ is  antiunitarily equivalent (via $U_A$) to the positive self-adjoint operator $|A|$. Hence $T$ is also positive and self-adjoint, so $T=|A^\dagger|$ by the uniqueness of the positive square root. Then
$$
U_A^\dagger |A^\dagger|=U_A^\dagger\, T= U_A^\dagger\,  U_A|A| U_A^\dagger=|A|\, U_A^\dagger=(U_A|A|)^\dagger =A^\dagger=U_{A^\dagger} |A^\dagger|,
$$
so  $(U_A)^\dagger = U_{A^\dagger}$ by the uniqueness of the phase operator.
\end{proof}

Many standard notions for linear operators carry over almost verbatim to antilinear operators. It is very easy to ``guess" the following definitions.
\begin{thd}\label{operatorclasses}
A densely defined antilinear operator $T$ on $\Hh$ is called\\
$\bullet$~ \emph{symmetric} if $T\subseteq T^\dagger$,\\
$\bullet$~ \emph{self-adjoint} if $T= T^\dagger$,\\
$\bullet$~ \emph{formally normal} if  $T^\dagger T\subseteq TT^\dagger$,\\
$\bullet$~ \emph{normal} if $T^\dagger T=T T^\dagger $,\\
$\bullet$~  \emph{antiunitary} if $T$ is a bijection of $\Hh$ and $\|Tx\|=\|x\|$ for $x\in \Hh$.
\end {thd}

Note that an antilinear densely defined operator $T$ is formally normal if and only if  $\cD(T)\subseteq \cD(T^\dagger)$ and $\|Tx\|=\| T^\dagger x\|$ for all $x\in \cD(T)$; likewise $T$ is normal if and only if  $T$ is formally normal and $\cD(T)=\cD( T^\dagger)$.

The following theorem clarifies the structure of antilinear self-adjoint operators. For bounded operators, this result was stated as \cite[Proposition 3.1]{routs}. The unbounded case requires  additional arguments.
\begin{tht}\label{charselfadjoint}
A densely defined closed antilinear operator $T$ on $\Hh$ is self-adjoint if and only there is a conjugation $C$ on $\Hh$ such that  $T=C|T|=|T|C$. In this case, we have $CE_{|T|}(M)= E_{|T|}(M)C$ for each Borel set $M$ of $[0,+\infty)$.
\end{tht}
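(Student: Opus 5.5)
The easy direction comes first. Suppose $T=C|T|=|T|C$ with $C$ a conjugation, that is, an antilinear isometric involution satisfying $\langle Cx,Cy\rangle=\langle y,x\rangle$. Then $C^\dagger=C$, since $\langle Cx,y\rangle=\langle Cy,x\rangle$. Using the inclusion $A^*T^\dagger\subseteq (TA)^\dagger$ for $T=C$ and $A=|T|$, we get $|T|C\subseteq (C|T|)^\dagger=T^\dagger$. For equality we compute the adjoint directly: $y\in\cD((C|T|)^\dagger)$ if and only if $x\mapsto\langle C|T|x,y\rangle=\langle Cy,|T|x\rangle$ is bounded on $\cD(|T|)$. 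This holds if and only if $Cy\in\cD(|T|)$, and then $T^\dagger y=|T|Cy$. Hence $T^\dagger=|T|C=T$.

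For the converse, assume $T=T^\dagger$ and use the polar decomposition of Theorem \ref{polarde}, $T=U|T|$ with $U=U_T$. Since $T=T^\dagger$, we have $|T^\dagger|=|T|$ and $U_{T^\dagger}=U$, so (\ref{polarstar}) gives $U^\dagger=U$. Also $T=T^\dagger=(U|T|)^\dagger\supseteq |T|U^\dagger=|T|U$. Arguing with domains as in the proof of Theorem \ref{polarde} (where $A^\dagger=|A|U_A^\dagger$ was shown), we obtain $T=|T|U$. Then $U|T|=|T|U$ on $\cD(|T|)$, so $U$ commutes with $|T|$. In particular $U$ maps $\cK:=\ov{\cR(|T|)}=\cN(|T|)^\perp$ onto itself. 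Since $\cR(T)\supseteq\cR(|T|U)$ and $T=T^\dagger$, the initial and final spaces coincide: $\ov{\cR(T)}=\cN(T^\dagger)^\perp=\cN(T)^\perp=\ov{\cR(|T|)}$. On $\cK$, $U$ is an antiunitary with $U^\dagger=U$, and $U^\dagger$ is the inverse of $U$ on $\cK$. So $U^2=I$ on $\cK$, which means $U|_\cK$ is a conjugation of $\cK$. On $\cN(|T|)=\cN(T)$ we choose an arbitrary conjugation $C_0$ (one exists on every Hilbert space, for instance via an orthonormal basis). We then set $C:=U|_\cK\oplus C_0$. This $C$ is a conjugation on $\Hh$, and both $C|T|=U|T|=T$ and $|T|C=|T|U=T$ hold, because $|T|$ vanishes on $\cN(|T|)$ and both sides act only through $\cK$.

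The last claim is that $CE_{|T|}(M)=E_{|T|}(M)C$. From $C|T|=|T|C$ we get $C|T|C=|T|$, so $C$ implements an antiunitary equivalence of $|T|$ with itself. The map $M\mapsto CE_{|T|}(M)C$ is a spectral measure, using $C^2=I$ and the fact that $CPC$ is an orthogonal projection whenever $P$ is. We then check that $\int\lambda\, d(CE_{|T|}C)=C|T|C$, which holds because $\lambda$ is real, so $\langle C|T|Cx,x\rangle=\langle Cx,|T|Cx\rangle=\int\lambda\,d\langle E(\lambda)Cx,Cx\rangle=\int\lambda\, d\langle CE(\lambda)Cx,x\rangle$. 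Uniqueness of the spectral measure then gives $CE_{|T|}(M)C=E_{|T|}(M)$.

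The main obstacle is the unbounded domain bookkeeping in the converse, namely upgrading $|T|U\subseteq T$ to equality and deducing that $U$ commutes with $|T|$ in the strong sense. I would handle this by showing that $U$ commutes with the bounded operators $(I+|T|^2)^{-1}$, using $T^\dagger T=|T|^2$ and $U|T|^2U^\dagger=|T^\dagger|^2=|T|^2$ (from the proof of Theorem \ref{polarde}), and then passing to functional calculus.
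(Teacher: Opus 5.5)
Your proof is correct and follows essentially the paper's route. Both arguments use the polar decomposition to get $U_T^\dagger=U_T$ from (\ref{polarstar}) and hence $T=U_T|T|=|T|U_T$, observe that $U_T$ restricted to $\cN(T)^\perp=\ov{\cR(T)}$ is a conjugation, and glue it with an arbitrary conjugation on $\cN(T)$. The paper phrases the middle step through the reduced operator $T_1$ on $\cN(T)^\perp$ and its phase operator; you work with $U_T|_{\cN(T)^\perp}$ directly, which comes to the same thing.

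There are two small differences. First, you prove $CE_{|T|}(M)=E_{|T|}(M)C$ directly, via uniqueness of the spectral measure of $C|T|C=|T|$, where the paper cites a standard commutation result. Second, the domain obstacle you anticipated does not arise. Because $U_T$ is bounded, $(U_T|T|)^\dagger=|T|U_T^\dagger$ holds exactly, by the same computation you used in the easy direction, so the resolvent detour is unnecessary.
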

\begin{proof}
The if direction is easy. Assume that $T=C|T|=|T|C$ for some conjugation $C$. Then, since $C$ is bounded, $T^\dagger=(C|T|)^\dagger= |T|^*C=|T|C=T$, that is, $T$ is self-adjoint.

Now suppose that $T$ is self-adjoint. 
Let $\Hh_0:=\cN(T)$. By (\ref{danorm}), $\Hh_0=\cN(|T|)$. Further,  $\Hh_1:=\cN(T)^\bot= \ov{\cR(T^\dagger) }=\ov{\cR(T)}$, because $T=T^\dagger$. Then, by Theorem  \ref{polarde}, $U_T$ is an isometric map of $\Hh_1$ onto $\Hh_1$ and $U_T=0$ on $\Hh_0$. 

Since $U_T=U_T^\dagger$ by (\ref{polarstar}) we  have $U_T|T| =T=T^\dagger=|T| U_T$ and $U_T T=U_T^2 |T|=U_T |T|U_T=TU_T$. These formulas imply that the decomposition $\Hh=\Hh_0\oplus \Hh_1$ reduces the operators $T$ and $|T|$, so there exists an operator $T_1$ on $\Hh_1$ such that $T=0\oplus T_1$ and $|T|=0\oplus |T_1|$. Then $T_1$ is an antilinear self-adjoint operator on $\Hh_1$ with trivial kernel. The corresponding phase operator $U_{T_1}$ is antiunitary and self-adjoint (by (\ref{polarstar})), so it is a  conjugation on $\Hh_1$, and it satisfies  $T_1=U_{T_1} |T_1|=|T_1| U_{T_1}$. Let $U_0$ be an arbitrary conjugation on $\Hh_0$. Setting $C:=U_0\oplus U_{T_1}$, we have $T=C|T|=|T|C$. 

The last assertion follows at once from \cite[Proposition 5.15]{schm12}. This result is stated in \cite{schm12} for linear operators $S$, but the proof applies verbatim to the antilinear operator $C$ as well.
\end{proof}

The next proposition implies that the spectrum of antilinear self-adjoint operators is  not empty and invariant under multiplication by complex numbers of modulus one. 
\begin{thp}\label{irrselfadjoint}
Suppose that $T$ is an antilinear self-adjoint operator. A complex number $\lambda$ is in $\sigma (T)$ if and only if $|\lambda|\in \sigma(|T|).$
\end{thp}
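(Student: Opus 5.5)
The plan is to first reduce to real nonnegative $\lambda$ by a rotation argument, and then to diagonalize $T$ as a \emph{real-linear} operator using the conjugation $C$ from Theorem \ref{charselfadjoint}.

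\emph{Step 1: rotation invariance.} For $u\in\dC$ with $|u|=1$, let $W$ be the linear unitary operator $Wx=ux$. Since $T$ is antilinear,
$$WTW^*x=u\,T(\ov{u}x)=u^2\,Tx .$$
Write $\lambda=re^{i\theta}$ with $r=|\lambda|\ge 0$, and choose $u$ with $u^2=e^{-i\theta}$. Then
$$T-\lambda I=e^{i\theta}\bigl(e^{-i\theta}T-rI\bigr)=e^{i\theta}\,W(T-rI)W^* .$$
Hence $T-\lambda I$ has a bounded everywhere defined (real-linear) inverse if and only if $T-rI$ does. So it suffices to prove that, for $r\ge 0$, $r\in\rho(T)$ if and only if $r\in\rho(|T|)$.

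\emph{Step 2: real decomposition.} By Theorem \ref{charselfadjoint}, $T=C|T|=|T|C$ for a conjugation $C$, and $C$ commutes with every spectral projection $E_{|T|}(M)$. Let $\Hh_\dR:=\{x\in\Hh: Cx=x\}$. This is a real Hilbert space, and $\Hh=\Hh_\dR\oplus i\Hh_\dR$ is an orthogonal sum with respect to the real inner product $\mathrm{Re}\langle\cdot,\cdot\rangle$.

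Since $C$ commutes with the spectral measure, $C$ commutes with all bounded Borel functions of $|T|$. From $C|T|\subseteq|T|C$ we get $C\cD(|T|)=\cD(|T|)$. Thus $\cD(|T|)=(\cD(|T|)\cap\Hh_\dR)\oplus i(\cD(|T|)\cap\Hh_\dR)$, and $|T|$ maps $\cD(|T|)\cap\Hh_\dR$ into $\Hh_\dR$. Let $A$ be this restriction. It is a real self-adjoint operator on $\Hh_\dR$ whose complexification is $|T|$.

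On this decomposition $T$ acts by
$$T(x+iy)=C|T|(x+iy)=Ax-iAy, \qquad x,y\in\cD(|T|)\cap\Hh_\dR .$$
So as a real-linear operator $T\cong A\oplus(-A)$, and therefore
$$T-rI\cong (A-r)\oplus\bigl(-(A+r)\bigr).$$

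\emph{Step 3: comparing inverses.} $T-rI$ has a bounded inverse on $\Hh$ if and only if both $A-r$ and $A+r$ have bounded inverses on $\Hh_\dR$. Because $C$ commutes with $(|T|\mp r)^{-1}$ whenever these exist (functional calculus), $A\mp r$ is boundedly invertible exactly when $|T|\mp r$ is. Since $|T|\ge 0$, the operator $|T|+r$ is invertible for $r>0$. For $r=0$ the two conditions coincide. Hence $r\in\rho(T)$ if and only if $r\notin\sigma(|T|)$, which together with Step 1 gives the proposition.

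The main obstacle is the careful handling of domains in Step 2 for unbounded $|T|$. One must show that $C$ preserves $\cD(|T|)$ and that the restriction $A$ is genuinely self-adjoint on $\Hh_\dR$, rather than merely symmetric, so that bounded invertibility passes between $A\mp r$ and $|T|\mp r$. I would handle this through the commutation of $C$ with $E_{|T|}$ asserted in Theorem \ref{charselfadjoint}, which gives commutation with the resolvents and with the bounded truncations $|T|E_{|T|}([0,n])$.
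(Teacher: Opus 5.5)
Your argument is correct, but it does not follow the paper, which gives no proof of its own. The paper only cites \cite[Proposition 2.15]{huhtanen} for bounded operators and asserts that the argument carries over verbatim to the unbounded case. You instead use the conjugation $C$ from Theorem \ref{charselfadjoint} to split $\Hh=\Hh_\dR\oplus i\Hh_\dR$ real-orthogonally. Then $T$ is the real direct sum $A\oplus(-A)$, and $T-rI\cong(A-r)\oplus(-(A+r))$. After the rotation $WTW^*=u^2T$, everything reduces to the invertibility of $|T|\mp r$.

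The natural bounded-case argument is different. It is also the identity behind the paper's own resolvent formulas (\ref{nc3})--(\ref{nc5}), namely the factorization
\begin{align*}
(T-\lambda I)(T+\ov{\lambda} I)=(T+\ov{\lambda} I)(T-\lambda I)=T^2-|\lambda|^2 I ,
\end{align*}
which holds for every antilinear $T$. Combined with rotation invariance, it gives $\lambda\in\sigma(T)$ if and only if $|\lambda|^2\in\sigma(T^2)$ for any bounded antilinear $T$. Self-adjointness enters only through $T^2=T^\dagger T=|T|^2$, so no conjugation is needed. That route is more general. For unbounded $T$, however, it needs domain checks on the products, for example $\cN(T-\lambda I)\subseteq\cD(T^2)$ and boundedness of $(T+\ov{\lambda}I)(|T|^2-|\lambda|^2I)^{-1}$. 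The word ``verbatim'' glosses over these.

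Your decomposition sidesteps them: the only domain fact needed is $C\cD(|T|)=\cD(|T|)$. It also yields the explicit resolvent $(T-rI)^{-1}w=(|T|-r)^{-1}\tfrac{w+Cw}{2}-(|T|+r)^{-1}\tfrac{w-Cw}{2}$ for $0\le r\in\rho(|T|)$.

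The obstacle you flag is milder than you fear. Step 3 never uses self-adjointness of $A$. One direction only needs that $(|T|\mp r)^{-1}$ commutes with $C$; the other only needs that $(A\mp r)^{-1}$ complexifies. One small imprecision: $C$ commutes only with \emph{real}-valued Borel functions of $|T|$, since in general $Cf(|T|)=\ov{f}(|T|)C$. Those are the only ones you use, so nothing breaks.
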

\begin{proof} \cite[Proposition 2.15]{huhtanen}, where the result is  stated for bounded operators. But the proof goes verbatim through in the unbounded case.
\end{proof}

\begin{thp} For $t\geq 0$,  we define 
\begin{align}\label{defist}
S_t(z)=t\, \ov{z} \quad \textrm{for} \quad 
z\in \Hh:=\dC.
\end{align} Then  is an irreducible antilinear self-adjoint operator. Each  irreducible antilinear self-adjoint operator is unitarily equivalent to some operator $S_t$ with unique number $t\in [0,+\infty)$.
\end{thp}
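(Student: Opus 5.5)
First I would check that $S_t$ has the claimed properties. It is clearly antilinear and bounded on $\Hh=\dC$. For $x,y\in\dC$ we have $\langle S_t x,y\rangle = t\ov{x}\,\ov{y}$ and $\langle S_t y,x\rangle = t\ov{y}\,\ov{x}$. These agree, so (\ref{tstar1}) gives $S_t^\dagger=S_t$. Irreducibility is automatic because $\dC$ has no nontrivial closed subspaces. Hence $S_t$ is an irreducible antilinear self-adjoint operator.

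For the converse, let $T$ be an irreducible antilinear self-adjoint operator on $\Hh$. By Theorem \ref{charselfadjoint} there is a conjugation $C$ with $T=C|T|=|T|C$, and $C$ commutes with every spectral projection $E_{|T|}(M)$. For any Borel set $M$, the projection $P=E_{|T|}(M)$ is linear and commutes with $C$ and $|T|$, so $PT\subseteq TP$. Thus $\cR(P)$ reduces $T$, and irreducibility forces $P=0$ or $P=I$. Consequently the spectral measure of $|T|$ takes only the values $0$ and $I$, which means $|T|=tI$ for some $t\ge 0$. In particular $T=tC$ is bounded.

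Next I would reduce the dimension using $C$. For a conjugation $C$, the real subspace $\Hh_C=\{x: Cx=x\}$ satisfies $\Hh=\Hh_C+i\Hh_C$. For any unit vector $e\in \Hh_C$, the one-dimensional subspace $\dC e$ is invariant under $C$, since $C(\lambda e)=\ov{\lambda}e$, and it is also invariant under $|T|=tI$. So $\dC e$ reduces $T$, because its orthogonal complement is likewise $C$-invariant: $C$ is antiunitary with $C^2=I$ and preserves $\dC e$. By irreducibility $\Hh=\dC e$. The unitary $\dC\to\Hh$, $z\mapsto ze$, then carries $S_t$ to $T$, since $T(ze)=t\ov{z}e$. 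The only point needing care here is the meaning of "reducing" for antilinear operators: I would use that a subspace reduces $T$ if its projection $P$ satisfies $PT\subseteq TP$. Under that definition both steps go through directly.

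Finally, uniqueness of $t$ follows from an invariant. If $U S_t U^{-1}=S_s$ with $U$ unitary, then $|S_t|=tI$ is carried to $|S_s|=sI$, so $t=s$. Alternatively, Proposition \ref{irrselfadjoint} shows that $\sigma(S_t)$ is the circle $\{|\lambda|=t\}$, which determines $t$. I expect the main obstacle to be the spectral-projection step, namely justifying that $E_{|T|}(M)$ reduces the antilinear operator $T$. This rests on the commutation relation $CE_{|T|}(M)=E_{|T|}(M)C$ from Theorem \ref{charselfadjoint}.
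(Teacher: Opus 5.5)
Your proof is correct and follows the paper's argument. Both use Theorem \ref{charselfadjoint} to write $T=C|T|$ with $C$ commuting with $E_{|T|}(\cdot)$, use irreducibility to force $|T|=tI$, and then use $C$-fixed vectors to cut the space down to dimension one. Two small differences in your favour: picking a $C$-fixed unit vector $e$ from the start avoids the paper's square-root adjustment $w=\beta v$, and you also prove the uniqueness of $t$ via $|S_t|=tI$, which the paper leaves implicit.
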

\begin{proof}
Obviously,  $S_t$ is  irreducible and self-adjoint. We prove that last assertion. Suppose that $S$ is an irreducible antilinear self-adjoint operator.

First we show that the spectrum of $|S|$ consists of a single point. Assume to the contrary that $\sigma(|S|)$ contains at least two  points. 
Then there exists a Borel set $M$ such that $\Hh_1:=E_{|S|}(M)\Hh\neq \{0\}$ and $\Hh_2:=E_{|S|}(\dR\backslash M)\Hh\neq \{0\}$. By Proposition \ref{charselfadjoint}, $S$ is of the form $S=|S|C$ for some conjugation $C$ and $C$ commutes with the spectral projections of $|S|$. Hence $\Hh_1$ and $\Hh_2$ reduce the conjugation $C$ and so $S=|S|C$, which contradicts the irreducibility of $S$. 

Thus, $\sigma(|S|)=\{t\}$ for some $t\in [0,+\infty).$ Then $S=t_0 C$   by Proposition \ref{charselfadjoint}. Since we can find an orthonormal basis of vectors which are invariant under $C$, the irreducibility of $S$ implies that $\dim \Hh=1$, say $\Hh=\dC\cdot v$. Then $Cv=\alpha v$, with $|\alpha|=1$. If $\beta$ is a square root of $\alpha$,  $w:=\beta v$ satisfies $Cw=w$. Then  there is a unitary operator mapping of $\Hh$ on $\dC$  which maps $w$ to $1$ and gives the unitary equivalence of $S$ and $S_t.$

\end{proof}
\section{Antilinear Normal Operators}\label{gennormal}

As mentioned in the introduction, our main emphasize in this paper is on antilinear normal operators.

A simple characterization of normality  is given in the following lemma.
\begin{thl}\label{ttchara}
Let $T$ be a densely defined closed  antilinear operator. Then $T$ is normal if and only if $|T|=|T^\dagger |$. 
\end{thl}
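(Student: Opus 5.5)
The argument reduces to the uniqueness of positive square roots. By definition $|T|=\sqrt{T^\dagger T}$ and $|T^\dagger|=\sqrt{T^{\dagger\dagger}T^\dagger}$. So the first step is to establish $T^{\dagger\dagger}=T$ for a densely defined closed antilinear $T$. Then $|T^\dagger|=\sqrt{TT^\dagger}$, and both $T^\dagger T$ and $TT^\dagger$ are positive self-adjoint by Proposition \ref{adag}; applying Proposition \ref{adag} to $T^\dagger$ also requires $T^\dagger$ to be densely defined and closed.

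To get these facts I would transfer to linear operators via $j$. Since $T$ is closed and densely defined, $j(T):\Hh^\anti\to\Hh$ is a closed densely defined linear operator. Hence $j(T)^*$ is closed and densely defined, and $j(T)^{**}=j(T)$. Because $T^\dagger=\io\circ j(T)^*$ and $\io$ preserves norms, $T^\dagger$ is closed and densely defined. Unwinding the definitions gives $j(T^\dagger)=j(T)^*$ as maps $\Hh^\anti\to\Hh^\anti$, or more simply one can check $T^{\dagger\dagger}=T$ directly from (\ref{tstar1}). Indeed, $y\in\cD(T^{\dagger\dagger})$ means $\langle T^\dagger x,y\rangle=\langle u,x\rangle$ for all $x\in \cD(T^\dagger)$. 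This is the adjoint relation for the linear operator $j(T)^*$, so its solutions are exactly the graph of $j(T)^{**}=j(T)$, i.e.\ of $T$. This bookkeeping with $\io$ and the swapped inner product is the only step needing care. I expect it to be the main, if routine, obstacle, since one must verify that the antilinear adjoint of the antilinear $T^\dagger$ matches the linear double adjoint.

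With $T^{\dagger\dagger}=T$ in hand, the two directions are short. If $T$ is normal, then $T^\dagger T=TT^\dagger$. Taking unique positive square roots of this positive self-adjoint operator gives $|T|=\sqrt{T^\dagger T}=\sqrt{TT^\dagger}=|T^\dagger|$. Conversely, if $|T|=|T^\dagger|$, squaring gives $T^\dagger T=|T|^2=|T^\dagger|^2=T^{\dagger\dagger}T^\dagger=TT^\dagger$, so $T$ is normal.

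If one prefers to avoid computing $T^{\dagger\dagger}$ explicitly, an alternative for the converse is Lemma (\ref{danorm}) applied to $T$ and to $T^\dagger$. It gives $\cD(T)=\cD(|T|)=\cD(|T^\dagger|)=\cD(T^\dagger)$ and $\|Tx\|=\|T^\dagger x\|$. By the remark after Definition \ref{operatorclasses}, this says $T$ is normal. The forward direction still needs $TT^\dagger=|T^\dagger|^2$, so I would prove $T^{\dagger\dagger}=T$ in any case.
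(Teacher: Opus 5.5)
Your proposal is correct and follows the paper's proof. Normality gives $T^\dagger T=TT^\dagger$, uniqueness of the positive square root then gives $|T|=|T^\dagger|$, and squaring gives the converse. The only difference is that you explicitly verify that $T^\dagger$ is closed and densely defined and that $T^{\dagger\dagger}=T$, which the paper uses tacitly when it identifies $|T^\dagger|=\sqrt{T^{\dagger\dagger}T^\dagger}$ with $\sqrt{TT^\dagger}$. Your direct check through the adjoint relation for $j(T)^*$ is the clean way to do this. The parenthetical ``as maps $\Hh^\anti\to\Hh^\anti$'' mislabels the spaces, but your argument does not need it.
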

\begin{proof}
If $T$ is normal, then $T^\dagger T=TT^\dagger\geq 0$.  Taking the positive square root of the positive self-adjoint operators (by Proposition \ref{adag}) on both sides of this equality we obtain  $|T|=|T^\dagger |$. Conversely, squaring $|T|=|T^\dagger |$ gives $T^\dagger T=TT^\dagger$.
\end{proof}
A general method for the study of  antilinear normals is to write them as products of  linear normal  operators with   conjugations.
\begin{thp}\label{tnormalconjugation1}
Let $N$ be a linear normal operator.  Then there is a conjugation $C$ on $\Hh$ such that $T:=NC$ is an antilinear normal operator and $T^\dagger T=N^*N$.
\end{thp}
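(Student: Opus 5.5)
We will choose a conjugation $C$ that commutes with the spectral measure of $N$ in a way that is compatible with conjugation of the spectral parameter. Then $NC$ becomes a product of commuting pieces, and normality can be checked directly.

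\textbf{Step 1: construct $C$.} By the spectral theorem in multiplication operator form, $N$ is unitarily equivalent to a multiplication operator $M_\varphi$ on $L^2(\Omega,\mu)$, where $\varphi$ is a measurable complex function. We let $C_0$ be pointwise complex conjugation, $C_0f=\ov{f}$, on $L^2(\Omega,\mu)$. This is a conjugation, and it satisfies
\begin{align*}
C_0M_\varphi C_0=M_{\ov{\varphi}}=M_\varphi^*, \qquad C_0\,\cD(M_\varphi)=\cD(M_\varphi).
\end{align*}
We transport $C_0$ back to $\Hh$ by the unitary. The resulting conjugation $C$ satisfies $CNC=N^*$ with $C\cD(N)=\cD(N)$. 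Equivalently, $C$ commutes with $|N|$ and $CE_N(M)C=E_N(M^c)$.

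\textbf{Step 2: compute $T^\dagger$.} Set $T:=NC$, with $\cD(T)=C\cD(N)=\cD(N)$. Since $C$ is bounded, antiunitary and satisfies $C^\dagger=C$, the inclusion $C^\dagger N^*\subseteq (NC)^\dagger$ from Section~\ref{antilinearoperators} is actually an equality. To see this, note that $y\in\cD((NC)^\dagger)$ means that the map $x\mapsto\langle NCx,y\rangle$ is bounded. Substituting $x=Cx'$, this says that $x'\mapsto\langle Nx',y\rangle$ is bounded, i.e.\ $y\in\cD(N^*)$. Hence
\begin{align*}
T^\dagger=CN^*=C(CNC)=NC=T .
\end{align*}
This looks too strong: $T$ would be self-adjoint. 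That is indeed fine when $CNC=N^*$, but it is a special case. In general, the safe formula is $T^\dagger=CN^*$.

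\textbf{Step 3: check normality.} We compute
\begin{align*}
T^\dagger T=CN^*NC, \qquad TT^\dagger=NCCN^*=NN^*=N^*N.
\end{align*}
Because $C$ commutes with $|N|$, we have $CN^*NC=C|N|^2C=|N|^2=N^*N$, and the domains match as well. Therefore $T^\dagger T=TT^\dagger=N^*N$, as claimed.

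\textbf{Main obstacle.} The only delicate point is the domain equality $(NC)^\dagger=CN^*$ in the unbounded case, together with the equality $\cD(CN^*NC)=\cD(N^*N)$. The first is handled by the substitution argument in Step 2. The second follows from $C\cD(|N|^2)=\cD(|N|^2)$, which holds in the multiplication model since $|\varphi|^2$ is real-valued.
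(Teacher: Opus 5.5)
Your proof is correct, and its skeleton is the paper's: realize the operator as a multiplication operator, take pointwise complex conjugation, and compute $T^\dagger T=CN^*NC=N^*N=NN^*=TT^\dagger$. The real difference is what you diagonalize.

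The paper diagonalizes only the positive operator $N^*N$, so its $C$ is required only to satisfy $CN^*NC=N^*N$, equivalently $C|N|C=|N|$. You diagonalize $N$ itself, which gives the stronger relation $CNC=N^*$. Your Step 2 then correctly shows $T^\dagger=CN^*=NC=T$, and you should not hedge there. This is not "a special case": with your choice of $C$ it holds for every normal $N$. In the terminology of the introduction, every normal operator is $C$-self-adjoint for a suitable conjugation $C$.

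So your route proves more: $NC$ is an antilinear self-adjoint operator, and normality is immediate. Step 3 is then needed only for $T^\dagger T=N^*N$, which also follows directly from $CN^*NC=(CN^*C)(CNC)=NN^*=N^*N$.

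The paper's weaker requirement on $C$ buys flexibility instead. Any conjugation commuting with $|N|$ works, and this produces genuinely non-self-adjoint normal operators. For example, for $N=\mathrm{diag}(\ov{z},z)$ the swap conjugation gives the operator $T_z$ of Section \ref{2model}, which is not self-adjoint when $z\notin\dR$.

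Your substitution argument proving the equality $(NC)^\dagger=CN^*$, rather than only the inclusion $CN^*\subseteq(NC)^\dagger$, also supplies a domain detail the paper uses tacitly.
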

\begin{proof}
Since the normal operator $N$ is closed,  $N^*N$ is a self-adjoint operator. By the spectral representation theorem, the operator $N^*N$ is up to unitary equivalence a multiplication operator by some real function on a measure space. Let $C$ denote  the standard conjugation on this measure space defined by $(Cf)(t)=\ov{f(t)}$. Then   $CN^*NC=N^*N$.
Clearly, $T=NC$ is a densely defined antilinear operator. Using that $N$ is normal we derive 
\begin{align}\label{tstartt1} T^\dagger T=(NC)^\dagger NC=CN^*NC=N^*N=NN^*=(NC)(NC)^\dagger  =TT^\dagger. 
\end{align}
Thus, $T$ is an antilinear normal operator.
\end{proof}

\begin{tht}\label{tnormalconjugation}
Suppose that $T$ is an antilinear normal operator. Then there exists a conjugation $C$ such $N:=TC$ is a linear normal operator and $T=NC$, \begin{align}\label{ntcond}
|N|=|T|\quad \textrm{and}~~~  C|T|C=|T|.
\end{align}
If $E_{|T|}$ denotes the spectral measure of $|T|$, then $
CE_{|T|}(M)C=E_{|T|}(M)$ for all  Borel sets $M$ of $[0,+\infty)$.
\end{tht}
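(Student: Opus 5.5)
The plan is to reduce everything to choosing a conjugation $C$ that commutes with the positive self-adjoint operator $|T|$. Once we have such a $C$, normality of $N:=TC$ should follow from a short computation using $|T|=|T^\dagger|$ (Lemma \ref{ttchara}).

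\emph{Step 1: construction of $C$.} By Proposition \ref{adag}, $|T|$ is a positive self-adjoint linear operator. By the spectral representation theorem, $|T|$ is unitarily equivalent to a multiplication operator by a nonnegative measurable function $\varphi$ on some $L^2(\Omega,\mu)$. Let $C$ be the pull-back of the standard conjugation $f\mapsto \ov{f}$, exactly as in the proof of Proposition \ref{tnormalconjugation1}. Since $\varphi$ is real, $C$ maps $\cD(|T|)$ onto itself and $C|T|C=|T|$. This is the second relation in (\ref{ntcond}). The statement about spectral projections then follows as in the last step of Theorem \ref{charselfadjoint}. Either one applies the antilinear version of \cite[Proposition 5.15]{schm12}, or one notes directly that $E_{|T|}(M)$ is multiplication by the real function $\chi_M\circ\varphi$, which commutes with complex conjugation.

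\emph{Step 2: $N$ is closed and normal.} Set $N:=TC$ with $\cD(N)=C\,\cD(T)=C\,\cD(|T|)=\cD(|T|)$. This domain is dense, and $N$ is linear as a product of two antilinear maps. Since $C^2=I$, we have $T=NC$. Closedness of $N$ follows from closedness of $T$, because $C$ is an isometric bijection.

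Next compute $N^*$. For $x\in\cD(N)$ and $y\in\cD(T^\dagger)$, use (\ref{tstar1}) together with the identity $\langle a,Cx\rangle=\langle x,Ca\rangle$ valid for a conjugation:
$$\langle Nx,y\rangle=\langle TCx,y\rangle=\langle T^\dagger y,Cx\rangle=\langle x,CT^\dagger y\rangle.$$
Since $C$ is a bijection, the same computation read backwards shows $\cD(N^*)=\cD(T^\dagger)$. Hence $N^*=CT^\dagger$.

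Therefore
$$N^*N=CT^\dagger TC=C|T|^2C=|T|^2,\qquad NN^*=TCCT^\dagger=TT^\dagger=|T^\dagger|^2=|T|^2,$$
where the last equality uses normality of $T$ through Lemma \ref{ttchara}. So $N^*N=NN^*$, meaning $N$ is normal. By uniqueness of the positive square root, $|N|=\sqrt{N^*N}=|T|$, which gives (\ref{ntcond}).

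\emph{Expected obstacle.} The only delicate point is the careful bookkeeping of domains in the products $CT^\dagger TC$ and $TCCT^\dagger$, together with the identity $(TC)^*=CT^\dagger$. Everything there hinges on $C$ being a bounded bijection that leaves $\cD(|T|)$ and $\cD(|T|^2)$ invariant. I would check these invariances first, directly in the multiplication-operator model, before doing the algebra above.
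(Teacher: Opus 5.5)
Your proposal is correct and follows essentially the same route as the paper: take $C$ to be the standard conjugation in a multiplication-operator model of $|T|$, then use $N^*=(TC)^*=CT^\dagger$ to get $N^*N=CT^\dagger TC=|T|^2=TT^\dagger=NN^*$. Your explicit domain bookkeeping for $(TC)^*=CT^\dagger$, and your direct check that $E_{|T|}(M)$ is multiplication by the real function $\chi_M\circ\varphi$ (instead of invoking Proposition 5.15 of the cited monograph), only fill in details the paper leaves implicit.
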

\begin{proof} Again we use  the spectral representation theorem and represent the  self-adjoint operator $|T|$  as  multiplication operator by some real function. Then the standard conjugation $C$ on this function space satisfies $C|T|=|T|C$ and
$C|T|C=|T|.$ Then 
 $$CT^\dagger TC=C|T|^2C=(C|T|C) (C|T|C)= |T|^2=T^\dagger T.$$
 Therefore, setting $N:=TC$ and using that $T^\dagger T=TT^\dagger$ by Lemma \ref{ttchara} we compute
\begin{align}
N^*N &=(TC)^*(TC)=CT^\dagger T C=T^\dagger T = TT^\dagger\nonumber\\ &= (TC)CT^\dagger=(TC)(TC)^*=NN^*.
\end{align}\label{nstarnn11}
Thus, $N$ is linear normal operator. Moreover, $N^*N=T^\dagger T$ 
 implies $|N|=|T|$.

From  \cite[Proposition 5.15]{schm12} it follows that $C|T|=|T|C$ is equivalent 
to $CE_{|T|}(\cdot )=E_{|T|}(\cdot)C$ and so to $CE_{|T|}(\cdot )C=E_{|T|}(\cdot)$. 
\end{proof}

\begin{rem} The preceding proof shows that for each conjugation $C$ satisfying $C|T|C=|T|$ the assertion of Theorem \ref{tnormalconjugation} holds. Thus, $C$ is highly non-unique in general. For instance, if $T$ is antiunitary, then $|T|=I$, so each conjugation $C$ on $\Hh$ can be taken. An antilinear operator $T$ on $\Hh$ is antiunitary if and only if $T=UC$ for some linear unitary operator $U$ and some conjugation $C$ on $\Hh$.
\end{rem}

\begin{rem}\label{Cnormal} Antilinear normal operators are closely related to the $C$-normal operators studied in \cite{ptak} and \cite{ko}. If $C$ is a conjugation on $\Hh$, then a densely defined closed linear operator $N$ on $\Hh$ is called \emph{$C$-normal}\, if $$(NC)(NC)^\dagger=(NC)^\dagger NC, $$ or equivalently, if $NN^*=CN^*NC$, that is, if the antilinear operator $NC$ is normal according to Definition \ref{operatorclasses}. From
Proposition \ref{tnormalconjugation1} and Theorem \ref{tnormalconjugation} (more precisely, from  formulas (\ref{tstartt1}) and (\ref{nstarnn11})) it follows that an antilinear operator $T$ is normal if and only if $T=NC$ for some $C$-normal operator $N$.
\end{rem}

The following auxiliary lemma is used in the proof of Theorem \ref{charnpolar}.

\begin{thl}\label{polarU} For each normal  linear operator $N$ there exists a unitary operator $U$ such that 
\begin{align}\label{unpol}
N=U|N|=|N|U\quad \textrm{and}~~~ N^*=U^*|N|=|N|U^*.
\end{align}
If $N$ is self-adjoint,  $U$ can be chosen to be self-adjoint and unitary.
\end{thl}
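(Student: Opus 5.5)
The natural approach is the spectral theorem for the normal operator $N$. Let $E_N$ be its spectral measure on $\dC$. Define the bounded Borel function $u:\dC\to\dC$ by
\begin{align*}
u(z)=\frac{z}{|z|}\quad\textrm{for}~~ z\neq 0,\qquad u(0)=1,
\end{align*}
and set $U:=u(N)=\int u(z)\,dE_N(z)$. Since $|u(z)|=1$ for all $z$, the functional calculus makes $U$ a bounded normal operator with $U^*U=UU^*=\overline{u}u(N)=I$, so $U$ is unitary. Defining $u(0)=1$ rather than $0$ is exactly what yields a unitary instead of merely a partial isometry. If we compared with the polar decomposition, this choice replaces the phase operator on $\cN(N)$ by the identity.

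Next I would verify (\ref{unpol}) with the multiplicativity rules of the functional calculus for unbounded functions. Here $|N|=\int |z|\,dE_N(z)$, and $\cD(|N|)=\cD(N)$. For a bounded function $f$ and an arbitrary Borel function $g$, one has $f(N)g(N)\subseteq (fg)(N)$ with $\cD(f(N)g(N))=\cD(g(N))$, and $g(N)f(N)=(fg)(N)$ on the domain $\cD((fg)(N))\cap\cD(f(N))$. Since $u(z)|z|=z$ for all $z$ (including $z=0$), we get $U|N|=N$ with the correct domain $\cD(|N|)=\cD(N)$. Likewise $|N|U=(|z|u)(N)=N$, because $\cD((|z|u)(N))=\cD(N)$ and $U$ is bounded with $\cD(U)=\Hh$. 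The same computations with $\overline{u(z)}|z|=\overline z$ give $N^*=U^*|N|=|N|U^*$, using $\overline{z}(N)=N^*$ and $\overline{u}(N)=U^*$.

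For the self-adjoint case, $E_N$ is supported on $\dR$. On $\dR$ the function $u$ takes only real values, namely $u(t)=\mathrm{sign}(t)$ for $t\neq0$ and $u(0)=1$. Hence $U=u(N)$ is self-adjoint, and being unitary it satisfies $U^2=I$.

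The only genuinely delicate step is the domain bookkeeping for products of bounded and unbounded functions of $N$, specifically the equality $|N|U=N$ and not just an inclusion. I would handle it by citing the standard functional calculus result, e.g.\ \cite[Theorem 4.16]{schm12}, where $\cD(f(N)g(N))=\cD(g(N))\cap\cD((fg)(N))$. Since $U$ is bounded, the first factor imposes no restriction, and the two domains coincide because $|u|=1$ implies $|z\,u(z)|=|z|$.
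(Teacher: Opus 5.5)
Your proposal is correct and is essentially the paper's proof: the paper takes $U$ to be the identity on $E_N(\{0\})\Hh$ and $\int_{\dC\backslash\{0\}} z|z|^{-1}\,dE_N(z)$ on the complement, which is exactly your $u(N)$ with $u(0)=1$. The self-adjoint case is handled in the same way, by observing that $u$ is real-valued on $\dR$. Your explicit domain bookkeeping via the product rule for spectral integrals supplies details the paper leaves implicit.
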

\begin{proof}
This result can be derived by means of properties of the polar decomposition and the normality of $N$. However, we prefer  to use the spectral representation theorem $N=\int_\dC z \, dE(z)$ for the normal operator $N$. Set $\dC^\times := \dC\backslash \{0\}$.  The orthogonal decomposition $\Hh=E(\{0\})\Hh\oplus E(\dC^\times)\Hh$ reduces the normal operator $N$ and its adjoint. Hence  the unitary operator $U$ on $\Hh$ defined by
$$
U:=I_ {E(\{0\})\Hh}\oplus \int_{\dC^\times} z |z|^{-1}\, dE(z)
$$
has the desired properties. Clearly, if $N$ is self-adjoint, the spectral measure is supported on $\dR$ and $U$ becomes self-adjoint.
\end{proof}
\begin{tht}\label{charnpolar}
Suppose that $T$ is a  densely defined closed  antilinear  operator. Then $T$ is normal if and only if there exists an antiunitary operator $V$ such that 
\begin{align}\label{tvrelations}T=V\,  |T|=|T|\, V\quad \textrm{and}~~~ T^\dagger=V^\dagger|T|=|T|V^\dagger.
\end{align}
Morevoer, in this case, $T^2$ is a linear normal operator and 
\begin{align}\label{T^2}
T^2=V^2|T|^2=|T|^2V^2.
\end{align}
\end{tht}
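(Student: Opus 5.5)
The plan is to prove the ``if'' direction directly and to build $V$ for the ``only if'' direction from Theorem \ref{tnormalconjugation} together with Lemma \ref{polarU}. Suppose first that an antiunitary $V$ with (\ref{tvrelations}) exists. Then
$$T^\dagger T=V^\dagger|T|\,|T|V=|T|V^\dagger V|T|=|T|^2$$
and
$$TT^\dagger=V|T|\,|T|V^\dagger=|T|VV^\dagger|T|=|T|^2,$$
using $V^\dagger V=VV^\dagger=I$. Some care with domains is needed here. The relations $V|T|=|T|V$ and $V^\dagger|T|=|T|V^\dagger$ give $V\cD(|T|)=\cD(|T|)$, and the same for $V^\dagger$. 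With this, both products have domain $\cD(|T|^2)$, so $T^\dagger T=TT^\dagger$ and $T$ is normal.

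For the ``only if'' direction, apply Theorem \ref{tnormalconjugation}. It gives a conjugation $C$ with $C|T|=|T|C$ such that $N:=TC$ is linear normal, $T=NC$ and $|N|=|T|$. Lemma \ref{polarU} then gives a unitary $U$ with $N=U|N|=|N|U$ and $N^*=U^*|N|=|N|U^*$. Set $V:=UC$, which is antiunitary. We get
$$T=NC=U|T|C=UC|T|=V|T|$$
and
$$T=NC=|T|UC=|T|V.$$
For the adjoint, $V^\dagger=CU^*$. Since $C$ is bounded, $T^\dagger=(NC)^\dagger=CN^*$, and this equals $CU^*|T|=V^\dagger|T|$. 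It also equals $C|T|U^*=|T|CU^*=|T|V^\dagger$. Each of these identities uses only that $C$ and $U$ are bounded and commute with $|T|$ in the operator sense.

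For the final assertion, compute $T^2=V|T|\,|T|V=V|T|^2V$. Next, $V|T|=|T|V$ gives $V|T|^2=|T|^2V$, and the domains work out because $V$ maps $\cD(|T|^2)$ onto itself. Hence $T^2=V^2|T|^2=|T|^2V^2$. Here $V^2$ is a linear unitary operator commuting with the positive self-adjoint operator $|T|^2$. Then $(T^2)^*=|T|^2V^{-2}=V^{-2}|T|^2$, and both $(T^2)^*T^2$ and $T^2(T^2)^*$ equal $|T|^4$. So $T^2$ is closed, densely defined and normal; equivalently, $T^2$ is a product of a unitary and a commuting positive self-adjoint operator, hence normal.

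The main obstacle is the domain bookkeeping, especially the inclusion $(AT)^\dagger\supseteq T^\dagger A^*$ becoming an equality when the linear factor is bounded. I would justify this directly from the definition of $\dagger$, using boundedness and invertibility of $C$, $U$ and $V$. I would also verify that $V|T|=|T|V$ in the strict operator sense, equal domains included, implies $V\cD(|T|^k)=\cD(|T|^k)$.
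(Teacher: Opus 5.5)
Your proposal is correct and follows essentially the same route as the paper. The ``if'' direction is a direct computation of $T^\dagger T$ and $TT^\dagger$, and the ``only if'' direction sets $V:=UC$, with $C$ from Theorem \ref{tnormalconjugation} and $U$ from Lemma \ref{polarU}. The domain bookkeeping you add, such as $V\cD(|T|^k)=\cD(|T|^k)$ and $(NC)^\dagger=CN^*$, fills in details the paper leaves implicit.
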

\begin{proof}
The if direction is  easy: Since $V$ is antiunitary, it follows immediately  from (\ref{tvrelations}) that $T^\dagger T= |T|^2$ and $TT^\dagger =|T|^2$, so $T$ is normal. 

Suppose now that $T$ is normal. Then,   by Proposition \ref{tnormalconjugation} there  exist a normal operator $N$ and a conjugation $C$ such that $T=NC$ and (\ref{ntcond}) holds. For the normal operator $N$ we choose a unitary $U$ as in Lemma \ref{polarU}.
Set $V:=UC$. Clearly, $V$ is an antiunitary. Using (\ref{unpol}) and  (\ref{ntcond}) we derive
\begin{align*}
T&=NC=U|N|C=UC|N|=V|T|,
 \\~~T&=U|N|C= |N|UC =|T|V,
\\~~ T^\dagger & =(NC)^\dagger=CN^*=CU^*|N|=V^*|T|,
\\~~ T^\dagger &=CU^*|N|=C|N|U^*=|N|CU^*=|T|V^*,
\end{align*}
which proves (\ref{tvrelations}).

In a similar manner one verifies (\ref{T^2}) and $(T^*)^2=|T|^2(V^2)^*=(V^2)^*|T|^2$.  Since $V$ is an antiunitary operator,  $V^2$ is a unitary with inverse $(V^2)^*$. Using these relations it is straightforward to verify that $T^2$ is normal.
\end{proof}

\section{Antilinear Normal Operators on $2$-dimensional Hilbert Space}\label{2model}

For $z\in \dC$ we define an antilinear mapping $T_z$ of $\dC^2$ by
\begin{align}\label{defTz}
T_z(x_1,x_2)=(\ov{z}\, \ov{x_2}, z\, \ov{x_1}), \quad (x_1,x_2)\in \dC^2.
\end{align}
Let $C$ denote the complex conjugation on $\dC$.
Then $T_z$ can be written as 
\begin{gather*} 
T_z=\left(
\begin{array}{ll}
\ov{z} & 0\\
~0 & z
\end{array}\right) \left(
\begin{array}{ll}
0 & C\\
C & 0
\end{array}\right)=\left(
\begin{array}{ll}
0 & C\\
C & 0
\end{array}\right) \left(
\begin{array}{ll}
\ov{z} & 0\\
~ 0& z
\end{array}\right)=\left(
\begin{array}{ll}
0 &\ov{z}\\
z & 0
\end{array}\right) \left(
\begin{array}{ll}
C & 0\\
0 & C
\end{array}\right).
\end{gather*}
One easily verifies that 
\begin{align}\label{Tz2}
T_z^\dagger=T_{\ov{z}}~~\textrm{and}~~~ 
T_vT_z=\left(
\begin{array}{ll}
\ov{vz}& ~0 \\
~~ 0 & vz
\end{array}\right) ~~~ \textrm{for}~~ v,z\in \dC.
\end{align}
In particular, 
\begin{align}\label{TdaggerT}
T_z^\dagger T_z=T_zT_z^\dagger =\ov{z}\, z\left(
\begin{array}{ll}
1& 0 \\
 0 & 1
\end{array}\right), ~~~ z\in \dC.
\end{align}
Thus, for any $z\in\dC$, $T_z$ is an antilinear normal operator on $\dC^2$. 

Obviously,  $T_z$ is an antilinear self-adjoint operator if and only if $z\in \dR$ and $T_z$ is an antilinear unitary operator if and only if $|z|=1.$
\begin{thl}\label{eigentz}\begin{itemize}
\item[\em (i)] If $z$ is not real, then $T_z$ has no eigenvalue.
\item[\em (ii)]
If $z$ is real, then for any $\varphi\in \dR$ the operator $T_z$ has the eigenvalues $ze^{\i\, \varphi}$ and $T_z$ is the orthogonal sum of two one-dimensional antilinear operators.
\end{itemize}
\end{thl}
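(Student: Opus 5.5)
The plan is to write down the eigenvalue equation in coordinates and reduce it, using (\ref{Tz2}), to a condition on $T_z^2$. Suppose $T_zx=\lambda x$ with $x=(x_1,x_2)\neq 0$. Applying $T_z$ once more and using antilinearity gives
\begin{align*}
T_z^2x=T_z(\lambda x)=\ov{\lambda}\,T_zx=\ov{\lambda}\lambda\, x=|\lambda|^2x .
\end{align*}
By (\ref{Tz2}) with $v=z$ we have $T_z^2=\mathrm{diag}(\ov{z}^2,z^2)$. Since $x\neq0$, some coordinate is nonzero, so $|\lambda|^2=z^2$ or $|\lambda|^2=\ov{z}^2$. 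In either case $z^2$ is a nonnegative real number.

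For (i), note that $z^2\geq 0$ forces $z\in\dR\cup i\dR$. So the case $z\notin\dR$ still has to exclude purely imaginary $z=it$ with $t\neq0$. For such $z$ we have $z^2=-t^2<0$, which cannot equal $|\lambda|^2\geq 0$, so there is no eigenvalue. I expect the main obstacle to be the remaining case, which is really a matter of reading the statement carefully: for $z\notin\dR\cup i\dR$ the same argument gives no eigenvalue. Purely imaginary $z$ is covered separately by the computation above. The case $z=0$ is real and belongs to (ii).

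For (ii), let $z\in\dR$. Then $T_z(x_1,x_2)=z(\ov{x_2},\ov{x_1})$. I would decompose $\dC^2=\dC e_+\oplus\dC e_-$ with $e_\pm=(1,\pm1)/\sqrt2$. Both lines are invariant, since $T_z(\alpha e_\pm)=\pm z\,\ov{\alpha}\,e_\pm$, and they are orthogonal. On each line the operator $\alpha\mapsto \pm z\ov{\alpha}$ is a one-dimensional antilinear operator, and this gives the orthogonal sum. To produce the eigenvalues, take $\varphi\in\dR$ and set $x:=e^{-i\varphi/2}e_+$. Then
\begin{align*}
T_zx=z\,e^{i\varphi/2}e_+=z e^{i\varphi}x,
\end{align*}
so $ze^{i\varphi}$ is an eigenvalue for every $\varphi$. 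The remaining check is routine: the eigenvalue set is exactly the circle of radius $|z|$, by the $T_z^2$ argument above with $|\lambda|=|z|$. This is consistent with Proposition \ref{irrselfadjoint}, since $T_z$ is self-adjoint and $\sigma(|T_z|)=\{|z|\}$ by (\ref{TdaggerT}).
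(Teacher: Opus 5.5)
Your proof is correct and is essentially the paper's argument. For (i) you iterate the eigenvalue equation, as the paper does in coordinates; packaging it as $T_z^2x=|\lambda|^2x$ with $T_z^2=\mathrm{diag}(\overline{z}^2,z^2)$ is slightly cleaner and makes the paper's extra step showing the other coordinate vanishes unnecessary. For (ii) you use the same eigenvector $e^{-i\varphi/2}(1,1)$ and the same splitting along $(1,1)$ and $(1,-1)$.

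The detour through $i\mathbb{R}$ is harmless but not needed, since $z^2\in[0,+\infty)$ already forces $z\in\mathbb{R}$.
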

\begin{proof}
(i): Assume that $T_z(x_1,x_2)=\lambda (x_1,x_2)$ for some  vector $ (x_1,x_2)\in \dC^2$ and $\lambda\in \dC$. This means that 
$\ov{z}\, \ov{x_2}=\lambda x_1$ and $ z \ov{x_1}=\lambda x_2.$ Then  $\ov{z}\, x_1=\ov{\lambda}\, \ov{x_2}$ and hence $\ov{z}^2 \ov{x_2}\, x_1= |\lambda|^2\, \ov{x_2}x_1.$ Because $z$ is not real, it follows that $\ov{x_2}x_1=0$, so that $x_1=0$ or $x_2=0$. If $x_1=0$, then $\ov{z}\, \ov{x_2}\,  =0$. Since $z\neq 0$, we obtain $x_2=0$. Similarly, $x_2=0$ implies $x_1=0$. Thus, in both cases,  $(x_1,x_2)=0$, that is, $\lambda$ is not an eigenvalue.

(ii): Since $z=\ov{z}$, it follows from (\ref{defTz}) that $$\quad\quad T_z(e^{-i\,\varphi/2},e^{-i\,\varphi/2})=(z e^{i\,\varphi/2},z e^{i\,\varphi/2})=z e^{i\,\varphi}(e^{-i\,\varphi/2},e^{-i\,\varphi/2}).$$ 
Define $S_1( \lambda (1,1))=z\ov{\lambda} (1,1)$ on $\dC\cdot (1,1)$ and $S_2 (\lambda (1,-1))=- z\ov{\lambda}(1,-1)$ on $\dC\cdot (1,-1)$. Then $T_z=S_1\oplus S_2$. 
\end{proof}

Let $\lambda\in \dC$ and $(y_1,y_2)\in \dC^2$. We consider the equation 
\begin{align}
T_z(x_1,x_2)-\lambda (x_1,x_2)=(y_1,y_2),
\end{align}
that is,
\begin{align*}
\ov{z}\, \ov{x_2}-\lambda x_1=y_1, \quad z\ov{x_1} -\lambda x_2=y_2.
\end{align*}
These equations imply that
\begin{align}
(\ov{z}^2 -|\lambda|^2) x_1=\ov{z}\, \ov{ y_2} +\ov{\lambda}\, y_1,\quad (z^2 -|\lambda|^2) x_2=z\, \ov{y_1} +\ov{\lambda} \, y_2.
\end{align}
Therefore, if 
\begin{align}\label{lambdacond}
 |\lambda|\neq z, - z, \ov{z}, -\ov{z},
\end{align}
 then $\lambda$ belongs to the resolvent set of $T_z$ and
\begin{align}\label{resolventz}
x_1=(\ov{z}^2- |\lambda|^2)^{-1}(\ov{z}\, \ov{y_2}+\ov{\lambda}\, y_1),\quad x_2=(z^2-|\lambda|^2)^{-1}(z\, \ov{y_1}+\ov{\lambda}\, y_2).
\end{align}
In particular, we have the following result.

\begin{thl}
If $z$ is not real, then  $\rho(T_z)=  \dC$ and
\begin{align*}\left(\begin{array}{c} x_1\\ x_2
\end{array}\right)=(T_z-\lambda I)^{-1}\left(\begin{array}{c} y_1\\ y_2
\end{array}\right)
\end{align*}
for $\lambda \in \dC$ and $(y_1,y_2)\in \dC^2$ is given by (\ref{resolventz}).
\end{thl}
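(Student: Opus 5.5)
The proof reads the claim off the computation just carried out. Fix a non-real $z$ and an arbitrary $\lambda\in\dC$. The solvability of $T_z x-\lambda x=y$ hinges on the scalars $\ov{z}^2-|\lambda|^2$ and $z^2-|\lambda|^2$. The first step is therefore to check that these never vanish.

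Since $z\notin\dR$, we have $z\neq 0$. If $z^2$ were a nonnegative real number, then $z$ would be real or purely imaginary. So the only possible failure is $z=i s$ with $s\in\dR$, $s\neq 0$. In that case $z^2=-s^2<0$, while $|\lambda|^2\geq 0$. In every case $z^2\neq |\lambda|^2$, and by conjugation also $\ov{z}^2\neq|\lambda|^2$. This is the condition that makes the elimination legitimate. Condition (\ref{lambdacond}) should be read in this form, namely $|\lambda|^2\neq z^2,\ov{z}^2$.

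Next, injectivity. Suppose $T_z x-\lambda x=y$. Conjugating the second equation, multiplying by $\ov{z}$, and substituting into the first (and symmetrically for the other component) gives the displayed relations $(\ov{z}^2-|\lambda|^2)x_1=\ov{z}\,\ov{y_2}+\ov{\lambda}\,y_1$ and $(z^2-|\lambda|^2)x_2=z\,\ov{y_1}+\ov{\lambda}\,y_2$. Since the coefficients are nonzero, $x$ is uniquely determined by $y$ through (\ref{resolventz}). In particular $T_z-\lambda I$ is injective.

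Then surjectivity. For arbitrary $(y_1,y_2)$, define $(x_1,x_2)$ by (\ref{resolventz}) and verify directly that $\ov{z}\,\ov{x_2}-\lambda x_1=y_1$ and $z\ov{x_1}-\lambda x_2=y_2$. For the first identity, one computes $\ov{z}\,\ov{x_2}=(\ov{z}^2-|\lambda|^2)^{-1}(\ov{z}^2 y_1+\ov{z}\lambda\,\ov{y_2})$. Subtracting $\lambda x_1$ cancels the $\ov{y_2}$ terms and leaves $(\ov{z}^2-|\lambda|^2)^{-1}(\ov{z}^2-|\lambda|^2)y_1=y_1$. The second identity is symmetric. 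This verification is the only real calculation and is routine; the main care needed is in tracking conjugations.

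Finally, the inverse map $y\mapsto x$ given by (\ref{resolventz}) is defined on all of $\dC^2$. It is real-linear, and it is bounded because its coefficients are finite constants; on finite-dimensional spaces boundedness is in any case automatic. Hence $\lambda\in\rho(T_z)$ for every $\lambda\in\dC$, that is, $\rho(T_z)=\dC$, and the resolvent is given by (\ref{resolventz}).
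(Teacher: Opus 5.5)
Your proof is correct and follows the paper's route. You eliminate to obtain the scalar relations, then observe that the coefficients $\ov{z}^2-|\lambda|^2$ and $z^2-|\lambda|^2$ cannot vanish for non-real $z$. Since $|\lambda|$ is real, the paper's condition (\ref{lambdacond}) is literally equivalent to yours, so no reinterpretation is needed.

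Your direct check that (\ref{resolventz}) actually solves the system makes explicit the surjectivity that the paper leaves implicit. It would also follow from injectivity of the real-linear map $T_z-\lambda I$ on $\dC^2\cong\dR^4$.
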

Formula (\ref{resolventz}) shows that for $\lambda \neq 0$ the operator  $(T_z-\lambda I)^{-1}$ is  neither  linear nor  antilinear, but it is real-linear.

Now we look when two operators $T_z$ and $T_v$ are unitarily equivalent. We compute 
\begin{gather}\label{tztz}
\left(
\begin{array}{ll}
~0 & 1 \\
-1 & 0
\end{array}\right) T_z\left(
\begin{array}{ll}
0 & - 1 \\
1 & ~ 0
\end{array}\right)=T_{-\ov{z}},
\end{gather}\begin{gather}\label{tzt-z} 
\left(
\begin{array}{ll}
1 &~ 0 \\
0 & -1
\end{array}\right) T_z\left(
\begin{array}{ll}
1 & ~0 \\
0 & -1
\end{array}\right)=T_{-z}.
\end{gather}
This shows that $T_z$ is unitarily equivalent to $T_{-\ov{z}}$ and also to $T_{-z}$. Therefore, each operator $T_z$ is unitarily equivalent  to an operator $T_v$ with $v$ from the closed right upper quarter plane
\begin{align*}\ov{K}:=\{v\in \dC: 0\leq \arg (v)\leq \pi/2\},
\end{align*}
where we have set $\arg(0):=0$.
\begin{thl}\label{tzequivalence}
For $z\in \dC$,  $T_z$ is unitarily equivalent to an operator $T_v$ with  $v\in \ov{K}$.

If $z, v\in \ov{K}$  and $z\neq v$, then  $T_z$ and $T_v$ are not unitarily equivalent.
\end{thl}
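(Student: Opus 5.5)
The first assertion follows from the identities (\ref{tztz}) and (\ref{tzt-z}); the second will be proved by comparing a unitary invariant, namely the spectrum of the linear operator $T_z^2$.

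\emph{First assertion.} Relations (\ref{tztz}) and (\ref{tzt-z}) are conjugations by unitary matrices, so $T_z$ is unitarily equivalent to $T_{-\ov{z}}$ and to $T_{-z}$. Composing the two, $T_z$ is also unitarily equivalent to $T_{\ov{z}}$. Hence $T_z$ is unitarily equivalent to each of $T_z$, $T_{-z}$, $T_{\ov{z}}$, $T_{-\ov{z}}$.
\begin{itemize}
\item If $z=0$, then $z\in\ov{K}$ already.
\item If $z\neq 0$, the four numbers $z,-z,\ov{z},-\ov{z}$ are obtained from $z$ by the reflections in the real and imaginary axes. One of them therefore has argument in $[0,\pi/2]$, i.e. lies in $\ov{K}$.
\end{itemize}

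\emph{Second assertion.} Suppose $U$ is a (linear) unitary operator on $\dC^2$ with $UT_zU^*=T_v$. Since $U^*U=I$, squaring gives $UT_z^2U^*=T_v^2$. By (\ref{Tz2}),
$$T_z^2=\mathrm{diag}(\ov{z}^2,z^2)\quad\text{and}\quad T_v^2=\mathrm{diag}(\ov{v}^2,v^2).$$
Unitarily equivalent linear operators have the same spectrum, so $\{z^2,\ov{z}^2\}=\{v^2,\ov{v}^2\}$.

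Now let $z,v\in\ov{K}$. Their arguments lie in $[0,\pi/2]$, so $z^2$ and $v^2$ have arguments in $[0,\pi]$, i.e. both lie in the closed upper half-plane. From the set equality, either $z^2=v^2$ or $z^2=\ov{v}^2$.
\begin{itemize}
\item In the second case $z^2$ lies in the upper half-plane and also in its mirror image, so $z^2=\ov{v}^2$ is real. Then $v^2=\ov{z^2}=z^2$ as well.
\item So in all cases $z^2=v^2$, which gives $z=\pm v$.
\end{itemize}
If $z=-v\neq 0$, the arguments of $z$ and $v$ differ by $\pi$, so they cannot both lie in $[0,\pi/2]$. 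Hence $z=v$, which proves the second assertion.

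\emph{Main obstacle.} The only delicate point is the boundary of $\ov{K}$, namely real or purely imaginary $z$ and $v$, together with the convention $\arg(0)=0$. There $z^2$ is real and the two alternatives $z^2=v^2$ and $z^2=\ov{v}^2$ coincide; the case analysis above handles this. The invariant $|T_z|=|z|I$ from (\ref{TdaggerT}) only yields $|z|=|v|$ and is not needed.
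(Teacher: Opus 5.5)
Your argument is correct, but for the second assertion you take a different and shorter route than the paper.

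The paper proceeds in three steps:
\begin{itemize}
\item it first extracts $|z|=|v|$ from $UT_z^\dagger T_zU^*=T_v^\dagger T_v$ and (\ref{TdaggerT});
\item it then uses the off-diagonal entries of $UT_z^2=T_v^2U$, together with $\varphi\neq\psi$ and $\varphi,\psi\in[0,\pi/2]$, to force $U$ to be diagonal;
\item finally it inserts the diagonal $U$ into $UT_z=T_vU$. This gives $u_{11}u_{22}=e^{{\rm i}(\psi-\varphi)}=e^{{\rm i}(\varphi-\psi)}$, which is impossible for $0<|\psi-\varphi|\leq\pi/2$.
\end{itemize}

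You instead use only the unitary invariant $\sigma(T_z^2)=\{z^2,\ov{z}^2\}$. You combine it with the elementary fact that $z\mapsto\{z^2,\ov{z}^2\}$ is injective on $\ov{K}$: squaring sends $\ov{K}$ into the closed upper half-plane, and $\ov{K}\cap(-\ov{K})=\{0\}$.

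This makes the $|z|=|v|$ step redundant and avoids all matrix-entry bookkeeping. It also isolates the actual mechanism: once $z$ is normalized to $\ov{K}$, the spectrum of the linear normal operator $T^2$ is already a complete invariant. This is the same ``squaring is injective on $K$'' principle that drives Proposition \ref{uniequi}.

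What the paper's computation offers in exchange is a description of the shape of a putative intertwiner: it must be diagonal. That is the template the paper later generalizes to block operator matrices in the proof of Proposition \ref{uniequi}.
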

\begin{proof}
The first assertion has been already noted above, so it remains to prove the second assertion.
Assume to the contrary that $z,v\in \ov{K}$,  $z\neq v$ and $T_z$ and $T_v$ are unitarily equivalent.
Then there is a unitary matrix $U=(u_{jk})_{j,k=1,2}$ with complex entries such that $UT_zU^*=T_v$. 

Then $U(T_z)^\dagger T_zU^*=(T_v)^\dagger T_v$. By (\ref{TdaggerT}) this implies that $|z|=|v|$. Let us abbreviate $\varphi:=\arg(z)$ and $\psi:=\arg(v)$. Further, we have $U(T_z)^2U^*= (T_v)^2$ and hence $U(T_z)^2=(T_v)^2U$.
Using (\ref{Tz2}) this leads to $u_{12}z^2=u_{12} \ov{v}^2$ and $u_{21}\ov{z}^2=u_{12} v^2$.  Now $u_{12}z^2=u_{12} \ov{v}^2$ yields\, $u_{12}e^{\i 2(\varphi+\psi)}=u_{12}$. Since $ 0\leq \varphi\leq \pi/2,$ $0\leq \psi\leq \pi/2$ and $\varphi\neq \psi$, we have $e^{2(\varphi+\psi)}\neq 1$. Therefore $u_{12}=0$. Similarly, $u_{21}=0$, so $U$ is a diagonal matrix. From $UT_z=T_vU$ we obtain 
$u_{11}\, \ov{z}= \ov{v}\, \ov{u_{22}}$ and $u_{22}\, z= v\, \ov{u_{11}}.$ Since 
$|z|=|v|,$ we derive that $u_{11}u_{22}$ is real and equal to $e^{\i (\psi-\varphi)}.$ Since 
$\psi-\varphi\in [-\pi/2,\pi/2]$ (by the asumption $z,v\in\ov{K}$) and $\varphi\neq \psi$, this is a contradiction.
\end{proof}

\section{A Model for Antilinear Normal Operators}\label{nmodel}

Throughout this section, $N_1$ and $N_2$ are normal linear operators on Hilbert spaces $\Hh_1$ and $\Hh_2$, respectively, and $\cN$ denotes the  normal linear operator on $\Hh:=\Hh_1\oplus \Hh_2$ given by the operator matrix
\begin{gather} 
\cN=\left(
\begin{array}{ll}
N_1 & ~ 0 \\
~0 &  N_2
\end{array}\right).
\end{gather}
Further, we assume that $\cC$ be a conjugation on $\Hh_1\oplus \Hh_2$ of the form
\begin{gather} 
\cC=\left(
\begin{array}{ll}
 ~0 & C_2\\
C_1 &  ~0 
\end{array}\right)
\end{gather}
such that 
\begin{align}\label{nccommute}
\cN\cC=\cC\cN.
\end{align}
Then we define an antilinear operator $T$ on $\Hh_1\oplus \Hh_2$ by
\begin{gather}\label{defiT}
T:=\cN\, \cC=\left(
\begin{array}{ll}
~~~ 0 & N_1C_2 \\
N_2C_1 &  ~~~ 0
\end{array}\right).
\end{gather}
This is the setup which  will be kept in this section. In particular,  we assume throughout
that (\ref{nccommute}) holds.

The operator $T$ 
 appears as operator $T_\ns$  in the structure Theorem \ref{normalform}  in Section \ref{structure}. Also, note that the operator $T_z$ defined by (\ref{defTz}) is a special case.

Since $\cC$ is a conjugation, 
 $C_1$ is the adjoint of $C_2:\Hh_2\to \Hh_1$, $C_2$ is the adjoint of $C_1:\Hh_1\to \Hh_2$, and we have
\begin{align}\label{csquared}
C_1C_2=I_{\Hh_2},~~~C_2C_1=I_{\Hh_1}.
\end{align}
Equation (\ref{nccommute}) is equivalent to
\begin{align}\label{nccommute1}
N_1C_2=C_2N_2~~ \textrm{and}~~ N_2C_1=C_1N_1.
\end{align}
Combined with (\ref{csquared}) the latter implies that
\begin{align}\label{n2n1}
C_1N_1C_2=N_2~~ \textrm{and}~~ C_2N_2C_1=N_1.
\end{align}
Clearly, $T$ is closed and densely defined and we have
\begin{gather} 
T^\dagger:=\cC \cN^*=\left(
\begin{array}{ll}
~~~ 0 & C_1N_2^* \\
C_2N_1^* &  ~~~ 0
\end{array}\right).
\end{gather}
It follows  from  $\cN\cC=\cC\cN$ that  $\cC\cN\cC=\cN$  and  hence $\cC\cN^*\cC=\cN^*,$ so that $\cC\cN^*=\cN^*\cC.$ Therefore 
\begin{align}TT^\dagger= \cN \cC\cC\cN^*=\cN\cN^*=\cN^*\cN,~~T^\dagger T=\cC\cN^*\cN\cC=\cN^*\cC \cC\cN=\cN^*\cN.
\end{align}
Hence $T$ is an {\it antilinear normal operator} on $\Hh_1\oplus \Hh_2.$ Further,  (\ref{nccommute})  implies that
\begin{align}\label{T2}
T^2= \cN\cC \cN \cC=\cN \cN \cC\cC=\cN^2.
\end{align}
\begin{thl}
Let $T$ and $T'$ be  operators as above, with corresponding operators $N_1, N_2,  C_1,C_2$ and $N_1', N_2', C_1', C_2'$, respectively. Then $N_1$ and $N_1'$ are unitarily equivalent if and only if $N_2$ and $N_2'$ are.
\end{thl}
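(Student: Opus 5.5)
The key input is the relation (\ref{n2n1}), namely $N_2=C_1N_1C_2$ with $C_2=C_1^{-1}=C_1^*$. So $N_2$ is \emph{antiunitarily} equivalent to $N_1$ via the antiunitary $C_1:\Hh_1\to\Hh_2$, and likewise $N_2'=C_1'N_1'C_2'$. The plan is to transport a unitary equivalence from the first components to the second by conjugating with these antiunitaries. Since an antiunitary times a unitary times an antiunitary is linear, the result will again be unitary.

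For the forward direction, suppose $W:\Hh_1\to\Hh_1'$ is unitary with $WN_1W^*=N_1'$; this is meant as an equality of unbounded operators, including domains, $W\cD(N_1)=\cD(N_1')$. Define
$$
W_2:=C_1'\,W\,C_2:\Hh_2\to\Hh_2'.
$$
It is linear, being a product of two antilinear maps and one linear map. It is isometric and surjective, since each factor is, so $W_2$ is unitary, with inverse $W_2^{-1}=C_1W^*C_2'$ by (\ref{csquared}) applied to both $\cC$ and $\cC'$. Then we compute, using (\ref{n2n1}) for $T'$ and for $T$ together with (\ref{csquared}):
$$
W_2N_2W_2^{-1}=C_1'WC_2\,(C_1N_1C_2)\,C_1W^*C_2'=C_1'\,WN_1W^*\,C_2'=C_1'N_1'C_2'=N_2'.
$$
Hence $N_2$ and $N_2'$ are unitarily equivalent.

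The converse follows by symmetry. We use $N_1=C_2N_2C_1$ and $N_1'=C_2'N_2'C_1'$, again from (\ref{n2n1}), and set $W_1:=C_2'W_2C_1$ for a unitary $W_2$ with $W_2N_2W_2^{-1}=N_2'$. The same computation then gives $W_1N_1W_1^{-1}=N_1'$.

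The only point needing care, and the main obstacle, is the domain bookkeeping for unbounded operators. We must check that $C_1N_1C_2$ has domain exactly $C_1\cD(N_1)$. This holds because $C_2$ is a bijection with inverse $C_1$, so $\cD(C_1N_1C_2)=\{y: C_2y\in\cD(N_1)\}=C_1\cD(N_1)$. Since every factor in the products above is a bijective bounded map, products of the form $AXA^{-1}$ have domain $A\cD(X)$. The displayed identity is therefore an equality of operators, domains included, and not just on some core.
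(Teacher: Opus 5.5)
Your proof is correct and follows the paper's argument. Like the paper, you transport a unitary $V$ with $N_1'=VN_1V^*$ to the unitary $C_1'VC_2\colon \Hh_2\to\Hh_2'$, use $N_2=C_1N_1C_2$ and $N_1=C_2N_2C_1$ to get $N_2'=(C_1'VC_2)N_2(C_1'VC_2)^*$, and obtain the converse by symmetry. Your version keeps the primes straight where the paper's displayed chain slips: the outer right factor must be $C_2'$, and $(C_1'VC_2)^*=C_1V^*C_2'$. Your domain identity $\cD(C_1N_1C_2)=C_1\cD(N_1)$ is also the correct one.
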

\begin{proof}
By symmetry it suffices to prove that if $N_1$ and $N_1'$ are unitarily equivalent, so are $N_2$ and $N_2'$.
Suppose that $V$ is a unitary operator of $\Hh_1$ on $\Hh_1'$ such that $N_1'=VN_1V^*$. Then, $T=C_1'VC_2$ is a unitary operator of $\Hh_2$ on $\Hh_2'$ and using  (\ref{n2n1}) we derive
$$
N_2'=C_1'N_1'C_2=C_1'VN_1V^*C_2=C_1'VC_2N_2C_1V^*C_2=TN_2T^*.
$$
This shows that  $N_2$ and $N_2'$ are also unitarily equivalent.
\end{proof}
\begin{thl}\label{resolventn1n2} $\rho(N_2)=\{ \ov{\lambda}: \lambda \in \rho(N_1)\, \}$ and for $\lambda \in \rho(N_1)$ we have 
\begin{align}\label{rhon1n2}
(N_2-\ov{\lambda})^{-1}=C_1(N_1-\lambda)^{-1}C_2.
\end{align} 
\end{thl}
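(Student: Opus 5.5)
The plan is to exploit the intertwining relations (\ref{n2n1}), $N_2=C_1N_1C_2$, together with (\ref{csquared}) and the antilinearity of $C_1,C_2$. The key point is how a scalar passes through the antilinear maps. For $\lambda\in\dC$ and $x\in\cD(N_2)$ we have
$$C_2(N_2-\ov{\lambda})x=N_1C_2x-\lambda C_2x=(N_1-\lambda)C_2x,$$
using $C_2N_2=N_1C_2$ from (\ref{nccommute1}) and $C_2(\ov{\lambda}x)=\lambda C_2x$. Before writing this, I will check the domain identity $C_2\cD(N_2)=\cD(N_1)$. It follows from (\ref{nccommute1}) read as operator equalities, which are implicit in $\cN\cC=\cC\cN$: the domain of $\cN\cC$ must equal that of $\cC\cN$. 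Applying $C_1$ and using $C_1C_2=I$ then gives
$$N_2-\ov{\lambda}=C_1(N_1-\lambda)C_2$$
as an equality of operators, including domains.

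Now let $\lambda\in\rho(N_1)$, and set $R:=C_1(N_1-\lambda)^{-1}C_2$. This is a bounded, everywhere defined linear operator on $\Hh_2$, since it is a composition of two antilinear bounded maps with one bounded linear map. I then verify the two identities directly:
$$(N_2-\ov{\lambda})R=C_1(N_1-\lambda)C_2C_1(N_1-\lambda)^{-1}C_2=C_1C_2=I_{\Hh_2},$$
and similarly $R(N_2-\ov{\lambda})x=x$ for $x\in\cD(N_2)$. Both use $C_2C_1=I_{\Hh_1}$. The first identity also needs that $C_2C_1$ maps $\cR((N_1-\lambda)^{-1})=\cD(N_1)$ into $\cD(N_1)$, which is trivial here. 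Hence $\ov{\lambda}\in\rho(N_2)$ and (\ref{rhon1n2}) holds.

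For the reverse inclusion I use the symmetric argument. Since $N_1=C_2N_2C_1$, the same computation with the roles of $(N_1,C_2)$ and $(N_2,C_1)$ interchanged shows that $\mu\in\rho(N_2)$ implies $\ov{\mu}\in\rho(N_1)$. Together the two directions give $\rho(N_2)=\{\ov{\lambda}:\lambda\in\rho(N_1)\}$.

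The main obstacle I expect is purely the bookkeeping of domains, namely justifying that (\ref{nccommute1}) holds as an equality of unbounded operators rather than as an inclusion. I would settle this first by noting that $\cD(\cN\cC)=\cC\cD(\cN)$ and $\cD(\cC\cN)=\cD(\cN)$. These two domains are equal, and together with the off-diagonal form of $\cC$ this forces $C_2\cD(N_2)=\cD(N_1)$ and $C_1\cD(N_1)=\cD(N_2)$.
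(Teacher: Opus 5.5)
Your proof is correct and follows the paper's argument: both rewrite $N_2-\ov{\lambda}=C_1(N_1-\lambda)C_2$ via (\ref{n2n1}), read off $C_1(N_1-\lambda)^{-1}C_2$ as the resolvent, and obtain the reverse inclusion by symmetry. Your explicit domain check $C_1\cD(N_1)=\cD(N_2)$ and your verification that the inverse works on both sides spell out what the paper leaves implicit; the paper even writes $\cD(N_2)=C_2\cD(N_1)$ where $C_1$ is meant.
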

\begin{proof}
Recall that $N_2=C_1N_1C_2$ by (\ref{n2n1}). Hence, for $x\in \cD(N_2)=C_2\cD(N_1)$, 
\begin{align}\label{n1n2rho}
y:=(N_2-\ov{\lambda} )x=C_1(N_1-\lambda)C_2x.
\end{align}
Therefore, if $\lambda\in \rho(N_1)$, we obtain
 $C_1 (N_1-\lambda)^{-1}C_2y=x$ and from (\ref{n1n2rho}) we conclude 
that $\ov{\lambda}\in \rho(N_2)$ and $C_1 (N_1-\lambda)^{-1}C_2y=(N_2-\ov{\lambda})^{-1}y$ which gives (\ref{rhon1n2}).

 A similar reasoning applies for $\ov{\lambda}\in \rho(N_2)$ and shows that  then $\lambda\in \rho(N_1).$
\end{proof}

For the next considerations restrictions on the spectrum of the operator $N_1$ are useful.  For this  the following definition is convenient.

\begin{thd}\label{msupported}
Let $N$ be a normal linear operator and let $M$ be a  Borel subset of $\dC$. We shall say that $N$ is \emph{$M$-supported} if $E_N(\dC\backslash M)=0$,
where $E_N$ denotes the spectral measure of $N$.
\end{thd}
We will use this notion for the set 
\begin{align}
K:=\{z\in \dC: 0<\arg z\leq \pi/2\} .
\end{align}
(We adopt the convention that the origin $0$ is not in $K$.)
\smallskip

 Now we investigate the resolvent of $T$. Let $\lambda\in \dC$. Given  $(y_1,y_2)\in \Hh_1\oplus \Hh_2$, we consider the equation 
\begin{align}\label{Tlambda}
T(x_1,x_2)-\lambda (x_1,x_2)=(y_1,y_2),
\end{align}
or equivalently,
\begin{align*}
N_1\, C_2x_2-\lambda x_1=y_1, \quad N_2 C_1x_1 -\lambda x_2=y_2.
\end{align*}
Applying $N_1C_2$ to the second equation,  multiplying  the first by $\ov{\lambda}$ and adding  both equations we obtain
 \begin{align}\label{nc1}
(N_1C_2N_2C_1 -\lambda \ov{\lambda}\, )x_1=N_1C_2y_2 + \ov{\lambda}\, y_1.
\end{align}
Similarly, we apply the operator $N_2C_1$ to the first equation, multiply the second  by $\ov{\lambda}$ and add both equations. This yields 
\begin{align}\label{nc2}
(N_2C_1 N_1C_2 -\lambda \ov{\lambda}\, ) x_2=N_2C_1y_1 +\ov{\lambda}\, y_2.
\end{align}
Using  (\ref{nccommute1}) and  (\ref{csquared}) it follows from (\ref{nc1}) and (\ref{nc2}) that
 \begin{align}\label{nc3}
(N_1^2 -|\lambda|^2 )x_1=N_1C_2y_2 + \ov{\lambda}\, y_1,~~~
(N_2^2 -|\lambda|^2 )x_2=N_2C_1y_1 + \ov{\lambda}\, y_2.
\end{align}
By Lemma \ref{resolventn1n2},  $\rho(N_2^2)$ is the complex conjugate of the set  $\rho(N_1^2)$, so   $|\lambda|^2\in \rho(N_1^2)$ implies  $|\lambda|^2\in \rho(N_2^2)$. Therefore, if $|\lambda|^2\in \rho(N_1^2)$
 it follows from (\ref{nc3}) that 
\begin{align}\label{nc4}
x_1&=(N_1^2- |\lambda|^2)^{-1}(N_1C_2 y_2+\ov{\lambda}\, y_1),\\ x_2&=(N_2^2-|\lambda|^2)^{-1}(N_2C_1 y_1+\ov{\lambda}\, y_2),\label{nc5}
\end{align}
and therefore $\lambda \in \rho(T)$. Further, if  $|\lambda|^2\in \sigma(N_1^2)$,  we conclude from (\ref{nc3}) that $\lambda \notin \rho(T)$, so that $\lambda \in \sigma(T)$.

\begin{thp}\label{Tinvertible} 
\begin{itemize}
\item[\em (i)] If   $N_1$ is $K$-supported, then $\cN(T-\lambda I)=\{0\}$  for  $ \lambda \in \dC.$
\item[\em (ii)] If $\dR\subseteq \rho(N_1)$, then $\rho(T)=\dC$ and for $\lambda \in \dC$ the action
 \begin{align*}\left(\begin{array}{c} x_1\\ x_2
\end{array}\right)=(T-\lambda I)^{-1}\left(\begin{array}{c} y_1\\ y_2
\end{array}\right)
\end{align*}
 is given by (\ref{nc4}) and (\ref{nc5}).
\end{itemize}
\end{thp}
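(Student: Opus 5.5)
For part (i), I would show directly that an eigenvector must vanish, using the identities (\ref{nc3}) with $y_1=y_2=0$. Suppose $T(x_1,x_2)=\lambda(x_1,x_2)$, that is, $N_1C_2x_2=\lambda x_1$ and $N_2C_1x_1=\lambda x_2$. First I would check the domains, since (\ref{nc3}) was obtained formally. The equation $N_2C_1x_1=\lambda x_2$ shows $x_2\in \cD(N_1C_2)$. Applying the antilinear operator $N_1C_2$ to it gives $N_1C_2N_2C_1x_1=\ov{\lambda}N_1C_2x_2=|\lambda|^2x_1$. By (\ref{nccommute1}) and (\ref{csquared}) we have $C_2N_2C_1=N_1$ on $\cD(N_1)$, so $x_1\in\cD(N_1^2)$ and $(N_1^2-|\lambda|^2)x_1=0$.

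Next I would show that $N_1^2-|\lambda|^2$ is injective. Since $N_1$ is $K$-supported, the spectral measure of $N_1^2$ is the image of $E_{N_1}$ under $z\mapsto z^2$. This map sends $K$ into $\{w: 0<\arg w\le \pi\}$, a set which contains no point of $[0,+\infty)$. Hence $E_{N_1^2}(\{|\lambda|^2\})=0$, so $N_1^2-|\lambda|^2$ is injective and $x_1=0$. Then $\lambda x_2=N_2C_1x_1=0$. If $\lambda\neq 0$ this gives $x_2=0$. If $\lambda=0$, then $N_1C_2x_2=0$, and $\cN(N_1)=E_{N_1}(\{0\})\Hh_1=\{0\}$ because $0\notin K$; hence $C_2x_2=0$ and so $x_2=0$.

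For part (ii), I would rely on the discussion preceding the proposition: it suffices to show $|\lambda|^2\in\rho(N_1^2)$ for every $\lambda\in\dC$. By the spectral mapping theorem for normal operators, $\sigma(N_1^2)=\{z^2:z\in\sigma(N_1)\}$; this uses that $\sigma(N_1)$ is closed and that the map $z\mapsto z^2$ is proper. If $z^2=t\ge 0$, then $z=\pm\sqrt t\in\dR$, and $\dR\subseteq\rho(N_1)$ excludes this. So $[0,\infty)\subseteq\rho(N_1^2)$, and by Lemma \ref{resolventn1n2} also $[0,\infty)\subseteq\rho(N_2^2)$. Uniqueness of solutions of (\ref{Tlambda}) then follows from (\ref{nc3}), exactly as in part (i).

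The main obstacle is existence together with boundedness. The preceding text only shows that any solution must be given by (\ref{nc4}) and (\ref{nc5}). I would therefore verify that these formulas define a bounded map $\Hh\to\cD(T)$ that really solves (\ref{Tlambda}). Boundedness follows from the functional calculus: $N_j(N_j^2-|\lambda|^2)^{-1}$ and $(N_j^2-|\lambda|^2)^{-1}$ are bounded, because $z/(z^2-t)$ and $1/(z^2-t)$ are bounded on the spectrum.

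To verify that the formulas solve (\ref{Tlambda}), I would use the intertwining $C_2(N_2^2-\mu)^{-1}=(N_1^2-\ov{\mu})^{-1}C_2$ with $\mu=|\lambda|^2$ real, obtained from (\ref{rhon1n2}) applied to $N_1^2$ and $N_2^2$. Together with $C_2N_2=N_1C_2$, this lets me compute
\begin{align*}
N_1C_2x_2-\lambda x_1=(N_1^2-|\lambda|^2)^{-1}\big(N_1^2y_1+\lambda N_1C_2y_2-\lambda N_1C_2y_2-|\lambda|^2y_1\big)=y_1.
\end{align*}
The second component is checked symmetrically. Hence $(T-\lambda I)^{-1}$ exists, is everywhere defined and is bounded (real-linear), so $\lambda\in\rho(T)$ for every $\lambda\in\dC$.
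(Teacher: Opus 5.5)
Your proof is correct and follows the paper's route: the identities (\ref{nc3}) and injectivity of $N_1^2-|\lambda|^2$ from $K$-support give (i), and $[0,+\infty)\subseteq\rho(N_1^2)\cap\rho(N_2^2)$ gives (ii). For $x_2$ in (i) you use $\lambda x_2=0$ together with $\cN(N_1)=\{0\}$ instead of the paper's conjugated kernel $\cN(N_2^2-|\lambda|^2)=C_1\cN(N_1^2-|\lambda|^2)=\{0\}$, and this works equally well. Your explicit check that (\ref{nc4})--(\ref{nc5}) define a bounded map into $\cD(T)$ which actually solves (\ref{Tlambda}) is worth keeping: the paper's discussion before the proposition only shows that every solution must have this form, so it leaves the surjectivity of $T-\lambda I$ implicit.
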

\begin{proof}
(i): Let $\lambda \in \dC$. From the assumption that  $N_1$ is $K$-supported it follows in particular that $\cN(N_1\pm|\lambda|\, I)=\{0\}$. This implies  $\cN(N_1^2-|\lambda|^2 \, I)=\{0\}$. Further, since $N_2=C_1N_1C_2$ and hence $N_2^2- |\lambda|^2\, I=C_1 ( N_1^2-|\lambda|^2\, I)C_2$, we conclude that $C_2\cN(N_2^2-|\lambda|^2\, I)\subseteq \cN(N_1^2-|\lambda|^2\, I)=\{0\}$. Thus, $\cN(N_2^2-|\lambda|^2\, I)=\{0\}$.

Now let $(x_1,x_2)\in \cN(T-\lambda I)$. Then $y_1=0, y_2=0$ by (\ref{Tlambda}) and it follows from (\ref{nc3})  that $x_1\in \cN(N_1^2-|\lambda|^2\, I)$ and $x_2\in \cN(N_2^2-|\lambda|^2\, I)$. Therefore, $x_1=0, x_2=0$ by the preceding which proves that $\cN(T-\lambda I)=\{0\}.$ \smallskip

(ii): Clearly,    $\dR\subseteq \rho(N_1)$ implies that $ [0,+\infty)\subseteq \rho(N_1^2)$.  Then, for any $\lambda\in \dC$, we have  $|\lambda|^2\in \rho(N_1^2)$. Therefore, by the  discussion preceding Proposition \ref{Tinvertible},  $\lambda \in \rho(T)$ and the formula for the resolvent follows from  (\ref{nc4}) and (\ref{nc5}).
\end{proof}
 We illustrate this result by a simple example.
\begin{thex}
Suppose  $R$ is a closed subset of the closed right upper quarter plane $\ov{K}$. Let $\Hh_1=L^2(R)$ and $\Hh_2=L^2(R^c)$ with respect to the plane measure of $\dC$. Recall that  $R^c$ denotes the complex conjugate of the set $R$. Define 
\begin{align*}
(N_jf_j)(z)&=z f_j(z)\quad \textrm{for}\quad f_j\in \Hh_j\, , j=1,2,\\ 
\cC(f_1,f_2)&= (\, \ov{f_2}, \ov{f_1}\, )\quad \textrm{for}\quad (f_1,f_2)\in \Hh_1\oplus \Hh_2.
\end{align*} Then the above assumptions are satisfied for the corresponding operator $T$.

The spectral projection $E_{N_1}(M)$ is the multiplication operator by the characteristic function of  $M$. Therefore, since $R\subseteq \ov{K}$,  $N_1$ is $K$-supported and  we have $\cN(T-\lambda I)=\{0\}$  for  $ \lambda \in \dC$ by Proposition \ref{Tinvertible}(i). If  $R$ has a positive  distance  to  the real line  $\dR$, then $\dR\subseteq \rho(N_1)$ and hence $\rho(T)=\dC$  by Proposition \ref{Tinvertible}(ii).
\end{thex}
\begin{thp}\label{uniequi}
Let $T$ and $T'$ be two operators as above, with corresponding operators $N_1, N_2, C_1 ,C_2$ and  $N_1', N_2',C_1',C_2'$  respectively. Suppose  $N_1$ and $N_1'$ are $K$-supported. Then the following are equivalent:
\begin{itemize}
\item[\em (i)] $T$ and $T'$ are unitarily equivalent.
\item[\em (ii)] There are unitary operators $u_{1}$ of $\Hh_1$ on $\Hh_1'$ and $u_{2}$ of $\Hh_2$ on $\Hh_2'$ such that
\begin{align}\label{ucnrealitions}
u_1N_1u_1^*=N_1', ~~ u_2N_2u_2^*=N_2',~~ u_2=C_1'u_1C_2.
\end{align}
\end{itemize}
\end{thp}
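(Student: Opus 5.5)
The plan is to reduce both directions to a statement about $N_1$ alone, using the relation $N_2=C_1N_1C_2$ from (\ref{n2n1}).

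\emph{(ii)$\Rightarrow$(i).} This is a direct computation. Put $U:=u_1\oplus u_2$, a unitary from $\Hh_1\oplus\Hh_2$ onto $\Hh_1'\oplus\Hh_2'$. Since $u_2=C_1'u_1C_2$ and $C_1^*=C_2$, we get $u_2^*=C_1u_1^*C_2'$. Then:
\begin{itemize}
\item the upper right entry of $UTU^*$ is $u_1N_1C_2u_2^*=N_1'u_1C_2C_1u_1^*C_2'=N_1'C_2'$, using (\ref{csquared});
\item the lower left entry is $u_2N_2C_1u_1^*=N_2'C_1'u_1C_2C_1u_1^*=N_2'C_1'$.
\end{itemize}
Hence $UTU^*=T'$. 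Domains match because $u_1\cD(N_1)=\cD(N_1')$.

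\emph{(i)$\Rightarrow$(ii).} First I reduce to showing that $N_1$ and $N_1'$ are unitarily equivalent. Suppose $u_1N_1u_1^*=N_1'$ and set $u_2:=C_1'u_1C_2$, which is unitary (a product of two antiunitaries and a unitary). By (\ref{n2n1}),
$$u_2N_2u_2^*=C_1'u_1C_2(C_1N_1C_2)C_1u_1^*C_2'=C_1'N_1'C_2'=N_2'.$$
So (\ref{ucnrealitions}) holds, and it suffices to prove that $N_1\cong N_1'$.

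Let $U$ be a unitary with $UTU^*=T'$. Then
$$U\cN^2U^*=UT^2U^*=T'^2=\cN'^2 \quad\text{by (\ref{T2})},\qquad U|T|U^*=|T'|.$$
Since $N_1$ is $K$-supported, the spectral measure of $N_1^2$ lives on $\{0<\arg w\le\pi\}$. By Lemma \ref{resolventn1n2} and $N_2=C_1N_1C_2$, the spectral measure of $N_2^2$ lives on the complex conjugate set. I split $\Hh$ by the spectral projections of $\cN^2$ onto three parts: the open upper half plane $\Omega_+$, the open lower half plane $\Omega_-$, and the negative real axis $(-\infty,0)$.

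On $\Omega_\pm$ the separation is clean. We have $E_{\cN^2}(\Omega_+)=E_{N_1^2}(\Omega_+)\oplus 0$, and $U$ intertwines $E_{\cN^2}(\Omega_+)$ with $E_{\cN'^2}(\Omega_+)$. So $U$ maps $E_{N_1}(K_0)\Hh_1$ onto $E_{N_1'}(K_0)\Hh_1'$, where $K_0=\{0<\arg z<\pi/2\}$. Because the square map is injective on $K_0$, $N_1$ is a Borel function of $N_1^2$ there. Hence $N_1$ restricted to this part is unitarily equivalent to the corresponding part of $N_1'$.

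The main obstacle is the imaginary-axis part $\arg z=\pi/2$, where $N_1^2$ and $N_2^2$ both land on $(-\infty,0)$ and $U$ need not be block diagonal. This is consistent with (\ref{tztz}) for $z=i$. On this part $N_1=\imath|N_1|$ and $N_2=-\imath|N_2|$, so $N_1$ is determined by the positive operator $|N_1|$ restricted there. Using $U|T|U^*=|T'|$, i.e.\ $U|\cN|U^*=|\cN'|$, compressed to $E_{\cN^2}((-\infty,0))$, I get that $|N_1|\oplus|N_2|$ is unitarily equivalent to $|N_1'|\oplus|N_2'|$ on these parts. Moreover $|N_2|=C_1|N_1|C_2$ is antiunitarily equivalent to $|N_1|$. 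For self-adjoint operators this already gives unitary equivalence, since the spectral measure class and the multiplicity function are preserved: conjugate by a conjugation that commutes with $|N_1|$, as in the proof of Theorem \ref{tnormalconjugation}. So $|N_1|\oplus|N_1|\cong|N_1'|\oplus|N_1'|$.

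By the Hahn–Hellinger multiplicity theory, this means the multiplicity functions satisfy $2m=2m'$ almost everywhere with equal measure classes. Since $2m=2m'$ forces $m=m'$ for cardinals (including infinite ones), $|N_1|\cong|N_1'|$ on this part, and hence $N_1\cong N_1'$ there. The point $0$ and the positive real axis play no role because $K$ excludes them. Gluing the two pieces gives $N_1\cong N_1'$, which completes the proof.
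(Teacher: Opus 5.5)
Your proof is correct, and it departs from the paper's proof exactly where it needs to. The paper takes an implementing unitary $U=(u_{ij})$ and obtains $u_{11}N_1=N_1'u_{11}$ from $u_{11}N_1^2=(N_1')^2u_{11}$, using that $z\mapsto z^2$ is injective on $K$. It then argues that $u_{12}=u_{21}=0$, so that $U=u_{11}\oplus u_{22}$ and $u_2=C_1'u_1C_2$ can be read off from $U\cC U^*=\cC'$.

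You instead note that (ii) is equivalent to $N_1\cong N_1'$, since the choice $u_2:=C_1'u_1C_2$ automatically intertwines $N_2$ and $N_2'$. You handle the open sector $0<\arg z<\pi/2$ by the same square-root argument. You treat the ray $i(0,\infty)$ through three ingredients:
\begin{itemize}
\item the relation $U|T|U^*=|T'|$;
\item the equivalence $|N_2|=C_1|N_1|C_2$, made unitary via a conjugation commuting with $|N_1|$;
\item the cancellation $A\oplus A\cong B\oplus B\Rightarrow A\cong B$.
\end{itemize}

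This buys you correctness on the imaginary axis, where the paper's block-diagonality claim actually fails. The paper's step ``from $UT=T'U$ we obtain $u_{12}N_2=N_1'u_{12}$'' is not justified: the $(1,1)$ entry of $UT=T'U$ reads $u_{12}N_2C_1=N_1'C_2'u_{21}$. What $UT^2=T'^2U$ does give is $u_{12}N_2^2=(N_1')^2u_{12}$. This forces $u_{12}=0$ only if $E_{N_1}(i(0,\infty))=0$, since otherwise the spectral measures of $N_2^2$ and $(N_1')^2$ both charge $(-\infty,0)$.

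Concretely, take $\Hh_1=\Hh_2=\dC$, $N_1=i$, $N_2=-i$, $C_1=C_2$ complex conjugation, and $T'=T$. The unitary $U(x_1,x_2)=(x_2,-x_1)$ satisfies $UTU^*=T$, yet $u_{11}=0$ and $u_{12}=1$.

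So the paper's route would give the stronger conclusion that every implementing unitary is diagonal, but only when $N_1$ lives on the open sector. Yours proves the proposition as stated, at the price of Hahn--Hellinger multiplicity theory (in its cardinal-valued form if the spaces are not separable).

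One small citation fix: the fact that $E_{N_2^2}$ lives on the conjugate set follows from Lemma \ref{auxilemma}. Lemma \ref{resolventn1n2} only controls spectra, and that alone would not exclude spectral mass of $N_2^2$ on $[0,\infty)$.
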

\begin{proof}
(i)$\to$(ii):
Let $U$ be a unitary operator  which establishes the unitary equivalence of $T$ and $T'$, that is,  $UTU^*=T'$. Then $U:\Hh_1\oplus \Hh_2\to \Hh_1'\oplus\Hh_2'$ is an  operator block matrix $U=(u_{ij})_{i,j=1,2}$. Clearly, from  $UTU^*=T'$ it follows that  that $UT^2=(T')^2U$ and hence $u_{11}N_1^2=(N_1')^2u_{11}$ by (\ref{T2}). Further, since $N_1$ and $N_1'$ are $K$- supported by assumption, we have\, $E_{N_1}(\dC\backslash K)=$ and\, $E_{N_1'}(\dC\backslash K)=0$. 

First we show that $u_{11}N_1=N_1' \,u_{11}$. Since $E_{N_1}(\dC\backslash K)=$, we have  
$$N_1=\int_{K} zdE_{N_1}(z)\quad \textrm{and}\quad N_1^2=\int_{K} z^2dE_{N_1}(z).$$
Since the map $z\to z^2$ is a bijection of $K$ and $K^2$, it follows from the transformation formula for spectral integrals (\cite[Proposition 4.24]{schm12}) that for each Borel set $M$ of $K$ the spectral projection $E_{N_1^2}(M^2)$ of the normal operators $N_1^2$ is equal to 
$E_{N_1}(M)$. Hence the sets of spectral projections of $N_1^2$ and $N_1$ coincide. Similarly, the sets of spectral projections of $(N_1')^2$ and $N_1'$ are the same. Hence it follows from $u_{11}N_1^2=(N_1')^2u_{11}$ that $u_{11}E_{N_1}(M)=E_{N_1'}(M)u_{11}$ for all Borel subsets $M$ of $K$  and therefore $u_{11}N_1=N_1' \,u_{11}$ by  \cite[Proposition 5.15]{schm12}.

Next we prove that $u_{12}=0$.  First we note that from $UT=T'U$ we obtain  $u_{12}N_2=N_1'u_{12}$.
Applying once again \cite[Proposition 5.15]{schm12}, it follows that
\begin{align}\label{Einter} 
u_{12}E_{N_2}(M)=E_{N_1'}(M)u_{12}
\end{align}
for all Borel subsets $M$ of $\dC$. Set $M:=\{z\in \dC:3\pi/2\leq\arg(z)\leq 2\pi\}$. Since $M\cap K=\emptyset$ and $N_1'$ is $K$-supported, $E_{N_1'}(M)=0$. Because $N_1$ is $K$-supported, the spectrum of $N_1$ is contained in $\ov{K}$. Therefore, by Lemma \ref{rhon1n2}, the spectrum of $N_2$ is contained in the complex conjugate  of $\ov{K}^c$ of the set $\ov{K}$ which is the set $M$. Hence $E_{N_2}(M)=I$.
Inserting these facts  into (\ref{Einter}) we obtain $u_{12}=0$.

A similar reasoning proves that $u_{21}=0$. Indeed, $u_{21}N_1=N_2'u_{21}$ yields $u_{12}E_{N_1}(M)=E_{N_2'}(M)u_{12}$. Setting $M$ as above, we get $u_{21}=0$.

Thus, the unitary operator $U$ is diagonal. Hence  $u_1:=u_{11}$ is a unitary operator of $\Hh_1$ on $\Hh_1'$ and $u_2:=u_{22}$ is a unitary operator of $\Hh_2$ on $\Hh_2'$. Further,  from $UTU^*=T'$ we obtain that $u_1N_1u_1^*=N_1'$ and $ u_2N_2u_2^*=N_2'.$ 

The two latter facts  imply that $U\cN U^*=\cN'$. Combined with $UTU^*=T'$ it follows that $U\cC U^*=\cC'$. Hence $U=\cC' U\cC$ which gives $u_2=C_1'u_1C_2$.\smallskip

(ii)$\to$(i):  Let $U$ be the diagonal block matrix with diagonal entries $u_1$ and $u_2$. The first two equalities of (\ref{ucnrealitions}) yield $U\cN U^*=\cN'$. Clearly, $u_2=C_1'u_1C_2$ implies $u_1=C_1'u_2C_1$, so that $U=\cC' U\cC$ and hence $U\cC U^*=\cC'$. Combined with 
$U\cN U^*=\cN'$ the latter gives $UTU^*=T'$.
\end{proof}

\begin{thd} An operator $S$ on $\Hh$ is  said to be \emph{irreducible} if there is no orthogonal decomposition $S=S_1\oplus S_2$ on $\Hh= \cG_1\oplus \cG_2$, where $S_1, S_2$ are operators on $\cG_1, \cG_2$, respectively, and $\cG_1, \cG_2\neq \{0\}.$
\end{thd}
\begin{thp}\label{irre}
\begin{itemize}
\item[\em (i)] If $T$ is irreducible, then $\dim \Hh_1=1$, or equivalently, $\dim \Hh_2=1$.   
\item[\em (ii)] If $\dim \Hh_1=1$ and $N_1$ on $\Hh_1$ is not real, then $T$ is irreducible.
\item[\em (iii)] If $T$ is irreducible, then $T$ is unitarily equivalent to an operator $T_z$ defined by (\ref{defTz}), where $z\in \dC$, $0<\arg z\leq \frac{\pi}{2}$.
\end{itemize}
\end{thp}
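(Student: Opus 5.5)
For (i) I will show that whenever $\dim\Hh_1\geq 2$ there is a nontrivial reducing decomposition. The key device is pairing subspaces of $\Hh_1$ with their images under $C_1$. Suppose $\cG\subseteq \Hh_1$ is a closed subspace reducing $N_1$, and put $\cG':=C_1\cG\subseteq\Hh_2$; this is closed because $C_1$ is an antilinear isometry of $\Hh_1$ onto $\Hh_2$, by (\ref{csquared}).

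Since $N_2=C_1N_1C_2$ by (\ref{n2n1}), $\cG'$ reduces $N_2$. Moreover $\cG\oplus\cG'$ is invariant under $\cC$, since $C_2\cG'=\cG$. Hence $\cG\oplus\cG'$ reduces both $\cN$ and $\cC$, and therefore reduces $T=\cN\cC$. The same holds for the complement $\cG^\perp\oplus C_1\cG^\perp$, because $C_1$ maps $\cG^\perp$ onto $(C_1\cG)^\perp$: it preserves moduli of inner products.

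So I need a nontrivial reducing subspace of $N_1$ when $\dim\Hh_1\geq2$.
\begin{itemize}
\item If $\sigma(N_1)$ has two points, take a nontrivial spectral projection $E_{N_1}(M)$.
\item Otherwise $N_1=\mu I$, and any nontrivial closed subspace reduces $N_1$.
\end{itemize}
The equivalence $\dim\Hh_1=1\iff\dim\Hh_2=1$ holds because $C_1$ is a bijection.

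For (ii), take $\Hh_1=\dC e_1$ with $N_1=\mu$, where $\mu\notin\dR$. Set $e_2:=C_1e_1$; then $\Hh_2=\dC e_2$, $C_2e_2=e_1$, and $N_2e_2=C_1N_1e_1=\ov{\mu}e_2$. In the basis $e_1,e_2$ the operator $T$ is therefore exactly $T_{\ov\mu}$ from (\ref{defTz}), up to the labelling of $z$. If $T$ were reducible, $\dC^2$ would split into two one-dimensional reducing subspaces. An antilinear map on a one-dimensional space $\dC v$ satisfies $Tv=\alpha v$, so $T$ would have an eigenvector. This contradicts Lemma \ref{eigentz}(i), which says $T_z$ has no eigenvalue for non-real $z$.

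For (iii), part (i) gives $\dim\Hh_1=\dim\Hh_2=1$, so $N_1=\mu$ and, as above, $T$ is unitarily equivalent to $T_z$ for some $z\in\dC$. By Lemma \ref{tzequivalence} we may take $z\in\ov K$. It remains to exclude $z$ real, that is, $\arg z=0$ or $z=0$. In that case Lemma \ref{eigentz}(ii) writes $T_z$ as an orthogonal sum of two one-dimensional operators. For $z=0$ this is trivial, since $T=0$ on a two-dimensional space is reducible. So irreducibility forces $0<\arg z\leq\pi/2$.

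I expect the main obstacle to be a careful check in (i). Reducing an antilinear operator requires the projections onto $\cG\oplus\cG'$ and its complement to commute with $T$ in the unbounded sense, including the domain condition. This follows from the projection commuting with $\cC$ and with the spectral projections of $\cN$, but it should be stated precisely.
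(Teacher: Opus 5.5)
Your proposal is correct and follows the paper's proof. In (i) the paper uses exactly the reducing subspace $\Hh_{11}\oplus C_1\Hh_{11}$ built from a nontrivial reducing subspace of $N_1$; your domain check, via the projection onto $\cG\oplus C_1\cG$ commuting with $\cN$ and $\cC$, fills in what the paper leaves implicit. In (ii)--(iii) the paper likewise reduces to $T_z$ with $z=\ov{\mu}$ and excludes real $z$ by Lemma \ref{eigentz}(ii). The differences are cosmetic: you get the unitary equivalence from the orthonormal basis $e_1, C_1e_1$ and cite Lemmas \ref{eigentz}(i) and \ref{tzequivalence}. The paper instead redoes the eigenvector computation directly and invokes Proposition \ref{uniequi} together with (\ref{tztz}) and (\ref{tzt-z}).
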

\begin{proof}
(i): First we note that  $\dim \Hh_1=\dim \Hh_2$, because $C_2$ is an  antilinear bijection of $\Hh_2$ on $\Hh_1$.

Suppose   $\dim\Hh_1>1$. From the spectral theorem it follows that the normal operator $N_1$ on $\Hh_1$ decomposes as an orthogonal direct sum $N_1=N_{11}\oplus N_{12}$ on $  \Hh_1=\Hh_{11}\oplus \Hh_{12}$, where $\dim \Hh_{11} \geq1$ and $\dim \Hh_{12}\geq 1$. Set $\cG_1:=\Hh_{11}\oplus C_1 \Hh_{11}$. Then $\cC(\cG_1)=C_2C_1\Hh_{11}\oplus C_1 \Hh_{11}=\cG_1$, so the closed linear subspace $\cG_1$ of $\Hh$ is invariant under $\cC$. Recall that  $N_2=C_1N_1C_2$ by (\ref{n2n1}). Hence we have $\cD(N_2)=C_1\cD(N_1)$ and $N_2$ decomposes as $N_2=N_{21}\oplus N_{22}$ on $\Hh_2=C_1 \Hh_{11}\oplus (C_1\Hh_{11})^\bot$. The preceding implies that $T$ decomposes as an orthogonal sum $T=T_1\oplus T_2$ on 
$\Hh= \cG_1\oplus (\cG_1)^\bot$,  with $\cG_1, \cG_2\neq \{0\}.$ This shows that $T$ is not irreducible.

(ii): Let $N_1=z_1$ on $\Hh_1\cong \dC$ and $N_2=z_2$ on $\Hh_2\cong \dC$. Assume that  $\dC\cdot (x_1,x_2)$  is a one-dimensional linear subspace of $\Hh$ which is invariant under $T$. Then
$T(x_1,x_2)=(z_1C_2x_2,z_2 C_1x_1)$ is a multiple of $(x_1,x_2)$, so there exists $\alpha\in \dC$ such that 
\begin{align}\label{zcrela}z_1C_2x_2=\alpha x_1~~ \textrm{and}~~  z_2C_1 x_1=\alpha x_2.
\end{align}
The relation $N_2=C_1N_1C_2$ yields $z_2=\ov{z_1}$. Using this fact
, the equality $C_1C_2=I_{\Hh_2}$ 
and (\ref{zcrela}) we derive
$$
\ov{z_1}^2\, x_2=z_2 \ov{z_1}\, x_2=z_2 C_1(z_1 C_2x_2)=z_2 C_1(\alpha x_1)=  \ov{\alpha} \, (z_2 C_1x_1)= \ov{\alpha}\, \alpha\, x_2.
$$
Since $z_1$ is not real,  (\ref{zcrela}) implies  $x_2\neq 0$. Hence $\ov{z_1}^2= |\alpha|^2$,  a contradiction, because $z_1$ is not real. Hence $T$ is irreducible.
\smallskip

(iii): Since $T$ is irreducible, by (i) and (ii) we can assume that $\Hh_1=\Hh_2=\dC$ and $N_1=z_1$, $N_2=\ov{z_1}$. 
Let $C_1'$ and $C_2'$ denote the complex conjugation on $\Hh_1$ and $\Hh_2$ and set $N_1'=\ov{z}$, $N_2=z$, where $z:=\ov{z_1}$. Then the operator $T'$ with $\cN', \cC'$ is just the operator $T_z$ given  by (\ref{defTz}). Since $\cC$ is a conjugation,  $|C_2(1)|=1$. Setting $u_1:=1, u_2:=\ov{C_2(1)}$, we have  $u_2=C_1' u_1C_2$ and the diagonal operator with diagonal entries $u_1$, $u_2$ provides the unitary equivalence of $T$ and $T'=T_z$, see
(\ref{ucnrealitions}). Lemma \ref{eigentz}(ii) implies that  $z$ is not real. Since $T_z$ is unitariy equivalent to $T_{-\ov{z}}$ and $T_{-z}$ by (\ref{tztz}) and (\ref{tzt-z}), respectively, we can assume that $0<\arg z\leq \frac{\pi}{2}.$
\end{proof}

Let $S$ be a linear or antilinear operator on  $\Hh$. If $S_0$ is a self-adjoint linear resp. antilinear operator acting a closed subspace $\Hh_0$ of $\Hh$ such that $S_0\subseteq S$, we say that $S_0$ is a {\it self-adjoint part} of $A$. Note that in this case  $\cD(S_0)\subseteq \cD(S)$, $\cD(S_0)$ is dense in $\Hh_0$ and $S_0$ maps $\cD(S_0)$ into $\Hh_0$. The self-adjoint operator $S_0=0$  on $\cD(S_0)=\Hh_0=\{0\}$ is called the trivial  self-adjoint part of $S$.
\begin{thd}\label{defnsa}
A linear or antilinear operator $S$ on a Hilbert space $\Hh$ is called \emph{completely non-self-adjoint} if  the only self-adjoint part of $S$ is the trivial part.
\end{thd}
The following  fact will be used later several times.
\begin{thl}\label{auxilemma}
 $N_2=C_1N_1C_2$ implies that $E_{N_2}(M)=C_1E_{N_1}(M^c)C_2$ for each Borel subset $M$ of $\dC$.
\end{thl}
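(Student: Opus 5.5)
The plan is to show that $M\mapsto C_1E_{N_1}(M^c)C_2$ is a spectral measure on $\Hh_2$ whose spectral integral of the identity function is $N_2$. The uniqueness of the spectral measure of a normal operator then gives the claim. Throughout, $C_2:\Hh_2\to\Hh_1$ and $C_1:\Hh_1\to\Hh_2$ are antilinear isometric bijections with $C_1C_2=I_{\Hh_2}$, $C_2C_1=I_{\Hh_1}$ and $C_1=C_2^\dagger$, by (\ref{csquared}).

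\textbf{Step 1: $F$ is a spectral measure.} Define $F(M):=C_1E_{N_1}(M^c)C_2$ for Borel sets $M\subseteq\dC$. Each $F(M)$ is linear, because it is a product of two antilinear maps and one linear map. It is idempotent, since $C_2C_1=I$ gives $F(M)^2=C_1E_{N_1}(M^c)^2C_2=F(M)$. It is self-adjoint: using that $C_1$ and $C_2$ are mutual adjoints in the antilinear sense,
$$\langle F(M)x,y\rangle=\langle C_1E_{N_1}(M^c)C_2x,y\rangle=\langle C_2y,E_{N_1}(M^c)C_2x\rangle=\langle E_{N_1}(M^c)C_2y,C_2x\rangle=\langle x,F(M)y\rangle .$$
Further, $F(\dC)=C_1C_2=I$, $F(M\cap L)=F(M)F(L)$, and $F$ is countably additive in the strong topology because $C_1$ is continuous and additive. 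Hence $F$ is a spectral measure on $\Hh_2$.

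\textbf{Step 2: $F$ represents $N_2$.} I would compare quadratic forms. For $x\in\Hh_2$, put $u:=C_2x$. Then $\langle F(M)x,x\rangle=\langle E_{N_1}(M^c)u,u\rangle$, by the computation above with $y=x$. So the scalar measure $\mu_x(M)=\langle F(M)x,x\rangle$ is the image of $\mu^{N_1}_u$ under complex conjugation, and $\int|z|^2\,d\mu_x<\infty$ if and only if $u\in\cD(N_1)$, that is, if and only if $x\in C_1\cD(N_1)=\cD(N_2)$. For such $x$, any $y\in\Hh_2$, and $v:=C_2y$, polarization gives
$$\Big\langle \int z\,dF(z)\,x,y\Big\rangle=\int \ov{w}\,d\langle E_{N_1}(w)v,u\rangle=\ov{\langle N_1u,v\rangle}=\langle C_1N_1C_2x,y\rangle=\langle N_2x,y\rangle .$$
Here the middle equalities come from the antilinear identity $\langle C_1a,C_1b\rangle=\langle b,a\rangle$. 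Therefore $N_2=\int z\,dF(z)$, including equality of domains, and uniqueness in the spectral theorem yields $E_{N_2}=F$.

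\textbf{Main obstacle.} The only delicate point is bookkeeping with complex conjugates when moving antilinear maps across inner products, especially in the polarization step, where the measures are complex. As an alternative that avoids this, I would use the fact that $N_2-\ov\lambda$ and $N_1-\lambda$ are intertwined as in Lemma \ref{resolventn1n2}, so that $g(N_2)=C_1\,\tilde g(N_1)\,C_2$ with $\tilde g(z)=\ov{g(\ov z)}$ for resolvent functions. Extending this identity to all bounded Borel functions by a monotone class argument, and taking $g=\chi_M$, gives $\tilde g=\chi_{M^c}$ and hence the statement.
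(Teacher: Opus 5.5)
Your proof is correct and takes the same route as the paper, which likewise writes $\langle N_2x,y\rangle$ as the integral of $z$ against $M\mapsto\langle C_1E_{N_1}(M^c)C_2x,y\rangle$ and appeals to uniqueness of the spectral measure. Your version is more complete, since it checks two things the paper leaves implicit: that $M\mapsto C_1E_{N_1}(M^c)C_2$ really is a spectral measure, and that $\cD(\int z\,dF(z))=C_1\cD(N_1)=\cD(N_2)$.

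Only your fallback argument needs adjusting. If $\rho(N_1)=\emptyset$ (e.g.\ $\sigma(N_1)=\dC$), there are no resolvent functions to start the monotone class argument from. One could use instead $(1+|z|^2)^{-1}$ and $z(1+|z|^2)^{-1}$, whose intertwining follows from $N_2^*N_2=C_1N_1^*N_1C_2$. Your main argument does not depend on this.
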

\begin{proof}
We first note that $\widetilde{E}$ defined by $\widetilde{E}(M):=E_{N_1}(M^c )$ is the spectral measure of $N_1^*$ and  $N_1=\int \ov{z}\, d \widetilde{E} (z) $. Then for $x,y\in \cD(N_2)=C_1 \cD(N_1)$ we derive
\begin{align*}
& \langle  N_2 x,y\rangle =\langle C_1N_1C_2 x,y\rangle=  \langle  C_1y, N_1C_2x\rangle = \\ \langle C_1 y ,\int \ov{z}\, d\langle & \widetilde{E}(z) C_2 x \rangle=\int z\, d\langle C_1 y, \widetilde{E} (z) C_2 x \rangle =\int z\, d \langle C_1 \widetilde{E} (z)C_2 x,y\rangle .
\end{align*}
Therefore, by the uniqueness of the spectral measure, the spectral measure $C_1\widetilde{E} C_2$ is equal to the spectral measure $E_{N_2}$ of the normal operator $N_2$. Thus, $E_{N_2}(M)=C_1E_{N_1}(M^c)C_2.$
\end{proof}

\begin{thp}\label{nsap}
Let $T$ be the antilinear normal operator defined as above. If $N_1$ is $K$-supported, then $T$ completely non-self-adjoint.
\end{thp}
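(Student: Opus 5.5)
Assume that $S_0$ is a self-adjoint part of $T$ acting on a closed subspace $\Hh_0\subseteq\Hh=\Hh_1\oplus\Hh_2$, so $S_0\subseteq T$, $S_0=S_0^\dagger$ and $\cD(S_0)$ is dense in $\Hh_0$. We will show $\Hh_0=\{0\}$. The idea is to combine two facts. First, antilinear self-adjoint operators have many eigenvectors in spectral subspaces of $|S_0|$. Second, $T$ has no eigenvalues at all when $N_1$ is $K$-supported, by Proposition \ref{Tinvertible}(i).

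By Theorem \ref{charselfadjoint} we write $S_0=C_0|S_0|=|S_0|C_0$ with a conjugation $C_0$ on $\Hh_0$ that commutes with all spectral projections $E_{|S_0|}(M)$. Suppose $\Hh_0\neq\{0\}$. Choose a bounded interval $M=[a,b]\subseteq[0,+\infty)$ with $E_{|S_0|}(M)\neq 0$ and $b-a$ small. The subspace $\cG:=E_{|S_0|}(M)\Hh_0$ lies in $\cD(S_0)$ and is invariant under $C_0$. Hence $\cG$ contains a nonzero vector $w$ with $C_0w=w$: take $w=v+C_0v$, or $w=i(v-C_0v)$ if that sum vanishes.

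The cleanest route is to pick $M$ so that $|S_0|$ has an eigenvalue, but this need not happen in general. So we use approximate eigenvectors instead. For $w$ as above with $\|w\|=1$, and $t$ the midpoint of $M$, we get $\|S_0w-tw\|=\|(|S_0|-t)w\|\le (b-a)/2$. Since $S_0\subseteq T$, this gives $\|Tw-tw\|\le(b-a)/2$. This yields a sequence of approximate eigenvectors of $T$ at some real $t\ge 0$, after passing to a convergent subsequence of midpoints $t_n\to t$, which is possible as long as the $t_n$ stay bounded.

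Turning this into a contradiction is the main obstacle, because point (i) of Proposition \ref{Tinvertible} only rules out exact eigenvalues. I would instead pass to $T^2=\cN^2$ from (\ref{T2}). We have $S_0^2=|S_0|^2\subseteq T^2=\cN^2$, since $S_0$ maps $\cD(S_0^2)$ into $\cD(S_0)\subseteq\cD(T)$. So $\cN^2$ has a part $|S_0|^2$ which is a positive self-adjoint linear operator on $\Hh_0$. Because $\cN^2$ is normal, a closed subspace on which it restricts to a self-adjoint operator with dense domain should be reducing. To see this, note that for $x\in\cD(S_0^2)$ we have $\|\cN^2x\|=\|(\cN^2)^*x\|$, and $\langle\cN^2x,x\rangle=\langle S_0^2x,x\rangle\ge 0$. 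I would argue that this forces $(\cN^2)^*x=\cN^2x$, using the spectral theorem for $\cN^2$ on vectors $x$ that are eigenvector-like for $|S_0|^2$. It then follows that the spectral measure of $\cN^2$ restricted to $\Hh_0$ is supported on $[0,+\infty)$, so $E_{\cN^2}([0,+\infty))\neq0$ whenever $\Hh_0\neq\{0\}$.

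Finally, I would compute $E_{\cN^2}([0,+\infty))=E_{N_1^2}([0,+\infty))\oplus E_{N_2^2}([0,+\infty))$. By the transformation formula for spectral integrals and the $K$-support of $N_1$, we have $E_{N_1^2}([0,+\infty))=E_{N_1}(\{z\in K: z^2\ge0\})=E_{N_1}(\emptyset)=0$. This holds because for $z\in K$, with $0<\arg z\le\pi/2$, the number $z^2$ is nonnegative real only when $z=0$, and $0\notin K$. For the second summand, Lemma \ref{auxilemma} gives $E_{N_2}(M)=C_1E_{N_1}(M^c)C_2$, so $N_2$ is supported on $K^c$ and likewise $E_{N_2^2}([0,+\infty))=0$. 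This contradicts $E_{\cN^2}([0,+\infty))\neq0$, so $\Hh_0=\{0\}$. The delicate point is the step from ``self-adjoint positive part of the normal operator $\cN^2$'' to ``nonzero spectral mass on $[0,+\infty)$''. If the argument above is awkward, I would first treat $\cD(S_0)=\Hh_0$ bounded pieces via $E_{|S_0|}([0,n])$, where $S_0$ is bounded and the inequality $\langle\cN^2x,x\rangle\ge0$ together with normality can be handled directly.
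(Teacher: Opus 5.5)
Your strategy is the paper's. From $S_0\subseteq T$ you pass to the positive self-adjoint part $S_0^2=|S_0|^2\subseteq T^2=\cN^2$, you want $\Hh_0$ to reduce the normal operator $\cN^2$, and you contradict $E_{\cN^2}([0,+\infty))=0$. That last identity you compute correctly from the $K$-support of $N_1$ and Lemma~\ref{auxilemma}. The approximate-eigenvector opening can be discarded: information about $\sigma(\cN^2)$ alone cannot help, since $\sigma(N_1)$ may touch the positive real axis while $E_{N_1}(\dC\backslash K)=0$.

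\textbf{The gap.} It is exactly the step you flag, which is the heart of the argument; the paper simply quotes it from the literature. Your sketch does not establish it, for two reasons.
\begin{enumerate}
\item Vector by vector, $\|\cN^2x\|=\|(\cN^2)^*x\|$ and $\langle\cN^2x,x\rangle\ge0$ do not force $(\cN^2)^*x=\cN^2x$. Take $\mathrm{diag}(i,-i)$ on $\dC^2$ and $x=(1,1)/\sqrt2$.
\item Even $(\cN^2)^*x=\cN^2x$ on $\cD(S_0^2)$ by itself yields only $\Hh_0\subseteq E_{\cN^2}(\dR)\Hh$. This is not sufficient, because $N_1^2$ may charge $(-\infty,0)$ (points with $\arg z=\pi/2$). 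Positivity of the form $\langle\cN^2x,x\rangle$ on $\Hh_0$ does not control the spectral measure unless $\Hh_0$ is genuinely reducing; consider $\mathrm{diag}(1,-1/2)$ with $x=(1,1)/\sqrt2$.
\end{enumerate}

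\textbf{The fix.} The missing lemma is that a self-adjoint part of a normal operator is a reducing part. Its short proof uses your two ingredients, but over the whole subspace rather than vector by vector. Let $P$ be the projection onto $\Hh_0$ and $A:=S_0^2$.
\begin{itemize}
\item For $x,y\in\cD(A)$ we have $\langle(\cN^2)^*x,y\rangle=\langle x,Ay\rangle=\langle Ax,y\rangle$. Since $\cD(A)$ is dense in $\Hh_0$, this gives $P(\cN^2)^*x=Ax$.
\item Normality then gives $\|(\cN^2)^*x\|=\|\cN^2x\|=\|Ax\|=\|P(\cN^2)^*x\|$, hence $(\cN^2)^*x=Ax$.
\item For $y\in\cD(\cN^2)$ and $x\in\cD(A)$ we get $\langle Py,Ax\rangle=\langle y,(\cN^2)^*x\rangle=\langle P\cN^2y,x\rangle$. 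So $Py\in\cD(A^*)=\cD(A)$ and $\cN^2Py=APy=P\cN^2y$.
\item Thus $P\cN^2\subseteq\cN^2P$ and $\cN^2=A\oplus R$ on $\Hh_0\oplus\Hh_0^\perp$. Because $A\ge0$, $E_{\cN^2}([0,+\infty))\ge P$, and together with your computation this gives $P=0$.
\end{itemize}
With this inserted, or the standard fact cited as the paper does, your proof is complete and coincides with the paper's.
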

\begin{proof}
Let $T_0$ be a self-adjoint part of $T$ acting on a Hilbert subspace $\Hh_0$. By Proposition \ref{adag}, $T_0^2$ is a positive self-adjoint linear operator on $\Hh_0$. Further,  $T^2$ is a normal linear operator on $\Hh$. Since $T_0\subseteq T$, we have  
\begin{gather} \label{t2}
T_0^2\subseteq T^2=\left(
\begin{array}{ll}
N_1^2 & ~ 0 \\
~ 0 &  N_2^2
\end{array}\right).
\end{gather}
 Because $T_0^2$ is self-adjoint on $\Hh_0$, there exists a linear operator $S$ on $\Hh_0^\bot$ such that $T^2 =T_0^2\oplus S$ on
 $\Hh=\Hh_0\oplus \Hh_0^\bot$ (\cite[Propositiom 1.17]{schm12}). Since $T^2$ is normal on $\Hh$, $S$ is normal  on $\Hh_0^\bot$ as well. For the  spectral projections we have\begin{align}\label{directsumee}
E_{T^2}(\cdot)=E_{T_0^2}(\cdot)\oplus E_{S}(\cdot)\quad \textrm{on}\quad \Hh=\Hh_0\oplus \Hh_0^\bot.
\end{align}
On the other hand, it follows from (\ref{t2}) that the spectral projections of $T^2$ decompose as
\begin{align}\label{directsumee1}
E_{T^2}(\cdot)=E_{N_1^2}(\cdot)\oplus E_{N_2^2}(\cdot)\quad \textrm{on}\quad \Hh=\Hh_1\oplus \Hh_2.
\end{align}
By assumption,  $N_1$ is $K$-supported.  Arguing as in the proof of
Proposition \ref{uniequi}, we conclude that $N_1^2$ is supported by the set $\{z\in \dC:0<{\rm arg}\, z\leq \pi\}$. Since $N_2=C_1N_1C_2$, it follows from Lemma \ref{auxilemma} that $E_{N_2}(M)=C_1E_{N_1}(M^c )C_2$ for each Borel subset $M$ of $\dC$. Hence $N_2$ is supported by $\{z\in \dC: \pi\leq {\rm arg}\, z< 2\pi\}$. Thus we obtain  from (\ref{directsumee1}) that $E_{T^2}([0,+\infty))=0$ and hence $E_{T_0^2}([0,+\infty))=0$ by (\ref{directsumee}). Since $T_0^2$ is a positive self-adjoint operator, this is only possible if $\Hh_0=\{0\}$, that is, $T_0$ is the trivial self-adjoint part. 
\end{proof}
\begin{rem}
A linear normal operator $A$ on $\Hh$ is completely non-self-adjoint according to Definition \ref{defnsa} if and only if 
$E_A(\dR)=0$.
\end{rem}

\section{A Structure Theorem for Antilinear Normal Operators}\label{structure}

Let us recall two definitions which have been used already earlier in this paper. Throughout we abbreviate
\begin{align}
K:=\{z\in \dC: 0<\arg z\leq \pi/2\}~~ \textrm{and}~~ K_\dT:=K\cap \dT.
\end{align}
According to Definition \ref{msupported}  a normal operator  $N$ is called \emph{$K$-supported} if $$E_N(\dC\backslash K)=0.$$

The following theorem is the main result of this paper. By an {\it antinormal operator} we mean  an antilinear normal operator.

\begin{tht}\label{normalform}
Suppose that $T$ is a densely defined closed antilinear  operator on a Hilbert space $\Hh$. Then the following are equivalent:
\begin{itemize}
\item[\em (i)] \, $T$ is antinormal. 
\item[\em (ii)] \, $T$ is of the form 
\begin{align}\label{Tsumsansa}
T=T_\sa\oplus T_\ns ~~~ \textrm{on}~~~\Hh= \Hh_\sa\oplus \Hh_\ns,~~\textrm{where}
\end{align} 
$\bullet$\, $T_\sa =SC_0=C_0S$ for some positive self-adjoint linear operator $S$ and some conjugation $C_0$ on  $\Hh_\sa$  and\\ 
$\bullet$\, $T_\ns=\cN \cC=\cC\cN$ 
for some normal operator  $ \cN=\left(
\begin{array}{ll}
 N_1 & ~0\\
~0 &  N_2
\end{array}\right)$, with normal linear operators $N_1$ and $N_2$ on $\Hh_1$ and $\Hh_2$, respectively, such that $N_1$ is $K$-supported, and  some conjugation $ \cC=\left(
\begin{array}{ll}
 ~0 & C_2\\
C_1 &  ~0 
\end{array}\right)$ on 
$\Hh_\ns= \Hh_1\oplus \Hh_2.$ 
\end{itemize}
\end{tht}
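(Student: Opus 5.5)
The direction (ii)$\Rightarrow$(i) is immediate from earlier results. The operator $T_\sa=SC_0=C_0S$ satisfies $T_\sa^\dagger=C_0S=T_\sa$, so $T_\sa^\dagger T_\sa=S^2=T_\sa T_\sa^\dagger$. The operator $T_\ns=\cN\cC$ with $\cN\cC=\cC\cN$ is antilinear normal by the computation in Section \ref{nmodel}. An orthogonal sum of antilinear normal operators is normal, since adjoints and products decompose along the sum.

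For (i)$\Rightarrow$(ii) I will work with the linear normal operator $T^2$. By Theorem \ref{charnpolar} we have $T=V|T|=|T|V$, $T^\dagger=V^\dagger|T|=|T|V^\dagger$, with $V$ antiunitary, so $W:=V^2$ is unitary, commutes with $|T|$, and $T^2=W|T|^2$. The idea is to split $\Hh$ spectrally according to the argument of $T^2$. Write $\Gamma_+=\{0<\arg z<\pi\}$, $\Gamma_-=\{\pi<\arg z<2\pi\}$, and $\dR_0=\dR$. Note that $z\mapsto z^2$ maps $K$ bijectively onto $\Gamma_+\cup(-\infty,0)$.

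Step 1 shows how $T$ transforms $E_{T^2}$. Since $T$ commutes with $T^2$, and $T$ is antilinear, one gets $T\,f(T^2)\subseteq \ov{f}(T^2)\,T$ in the functional calculus. Concretely, I would check $V E_{T^2}(M)V^{-1}=E_{T^2}(M^c)$ using $V W V^{-1}=W$, $V|T|V^{-1}=|T|$, and antilinearity, in the spirit of Lemma \ref{auxilemma}. Hence $\Hh_\pm:=E_{T^2}(\Gamma_\pm)\Hh$ are interchanged by $V$ and $E_{T^2}(\dR)\Hh$ is $V$-invariant.

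Step 2 treats the real part. On $\Hh_r:=E_{T^2}(\dR)\Hh$, $W$ is self-adjoint unitary. Split $\Hh_r=\Hh_{r}^{+}\oplus\Hh_r^{-}$ according to $W=\pm1$, discarding the kernel of $|T|$, which can be absorbed into $\Hh_\sa$. On $\Hh_r^{+}$ we have $V^2=I$, so $V$ is an antiunitary involution, i.e.\ a conjugation $C_0$ commuting with $|T|$. Thus $T=C_0|T|$ there, which is the self-adjoint piece. On $\Hh_r^{-}$ we have $V^2=-I$, a quaternionic structure. I will set $J:=V$ and choose a splitting $\Hh_r^-=\Hh_1'\oplus V\Hh_1'$ invariant under $|T|$. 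This is possible by taking a spectral representation of $|T|$ on a maximal subspace $\Hh_1'$ with $\Hh_1'\perp V\Hh_1'$, obtained by a Zorn argument on $|T|$-reducing subspaces. Setting $N_1=i|T|$ on $\Hh_1'$ puts this piece into model form with spectrum on $i\dR_+$, which lies in $K$ since $\arg=\pi/2$ is allowed.

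Step 3 treats the nonreal part. On $\Hh_+\oplus\Hh_-$ take $\Hh_1:=E_{T^2}(\Gamma_+)\Hh$ and $\Hh_2:=V\Hh_1$. Let $N_1$ be the square root of $T^2|_{\Hh_1}$ with spectrum in $K$, defined via the bijection $K\to\Gamma_+$ in the functional calculus. Let $C_1:=V|_{\Hh_1}$ and $C_2:=V^{-1}|_{\Hh_2}$, so that $\cC$ is a conjugation. Set $N_2:=C_1N_1C_2$, and verify $T=\cN\cC$ using $T=|T|V$ and $|T|=|N_1|$ on $\Hh_1$. This requires checking that the phase of $V$ matches, which amounts to replacing $V$ by $V'$ with $V'^2=W'$ chosen so that the relevant square root corresponds. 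This adjustment of $V$, namely choosing $C_1$ so that $N_1C_2=C_2N_2$ and $T=\cN\cC$ holds exactly, is the main obstacle. I would first try to define $C_1:=\ov{\phi}(T^2)\,V$ with $\phi$ a unimodular Borel function on $\Gamma_+$ making $C_2C_1=I$.

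Finally, I would merge the pieces into $\Hh_\ns$ and $\Hh_\sa$, and note that complete non-self-adjointness of $T_\ns$ follows from Proposition \ref{nsap}.
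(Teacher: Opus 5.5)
Your skeleton matches the paper's: the polar form $T=V|T|=|T|V$ of Theorem \ref{charnpolar}, the symmetry $VE(M)=E(M^c)V$, a Zorn splitting of the part where $V^2=-I$ as in Lemma \ref{galindo}, and a square root of $V^2$ as the phase. Splitting by $E_{T^2}$ so that the kernel of $|T|$ goes into $\Hh_\sa$ is also a sensible refinement, since $0\notin K$. But the step you call the ``main obstacle'' is the heart of the proof, and you leave it open. Worse, the choice you actually write down fails.

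On $\Hh_1$ write $N_1=U_1|T|_1$, where $U_1$ is the unitary phase of $N_1$. Since $N_1^2=T^2=V^2|T|^2$ and $|N_1|=|T|$ is injective there, $U_1$ commutes with $|T|$ and $U_1^2=V^2\lceil\Hh_1$. With $C_1=V\lceil\Hh_1$ you get $\cN\cC x_1=C_1N_1x_1=VU_1|T|x_1$, which is not $Tx_1=V|T|x_1$. The $\Hh_2$-block fails in the same way. The same defect sits unnoticed in Step 2: with $N_1=i|T|$ and $C_1=V$ one gets $C_1N_1=-iT$. Moreover, the condition you want $\phi$ to secure, $C_2C_1=I$, is not the real constraint. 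Putting $C_2:=C_1^{-1}=C_1^\dagger$ makes it automatic, and makes $\cC$ a conjugation for any antiunitary $C_1\colon\Hh_1\to\Hh_2$. What must be arranged is $T=\cN\cC$ on both blocks.

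The missing idea is that the $\Hh_1$-block forces $C_1:=VU_1^{-1}$. In your notation this is $C_1=V\,\ov{\phi}(T^2)$ with $\phi(T^2)\lceil\Hh_1=U_1$: $\phi$ is not an arbitrary unimodular function, and $V$ acts last. The $\Hh_2$-block then holds exactly because $U_1$ is a square root of $V^2$ commuting with $|T|$:
\[
C_1N_1=VU_1^{-1}U_1|T|=T\ \textrm{on}\ \Hh_1,\qquad N_1C_1^{-1}=U_1|T|U_1V^{-1}=U_1^2V^{-1}|T|=V|T|=T\ \textrm{on}\ \Hh_2 .
\]
On the piece where $V^2=-I$ one has $U_1=i$, so $C_1=iV$.

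This is precisely the paper's device. It takes $U=U_1\oplus U_2$ with $U_2$ the conjugate square root of $V^2$ on $\Hh_2$, derives $V_\ns U=UV_\ns$ from (\ref{EVA2}), and checks $V_\ns^2=U^2$. It then sets $\cC:=V_\ns U^{-1}$, so that $\cC^2=V_\ns^2U^{-2}=I$ and $T_\ns=|T|_\ns V_\ns=|T|_\ns U\cC=\cN\cC$.

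You also need one identity for your Zorn step: $\langle Vf(|T|)x,x\rangle=0$ for real bounded $f$, which follows from $V^2=-I$. It is what lets a maximal $\Hh_1'$ exhaust the $V^2=-I$ part. With this identity and the choice of $C_1$ above inserted, your argument goes through.
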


Theorem \ref{normalform} will be proved in Section \ref{proof}. Here we  
discuss this result,  derive some consequences, and collect a number of useful facts and formulas for an antilinear normal operator $T$. For this we retain the notation of Theorem \ref{normalform}.

First we note that the operator $T_\ns$ is just the operator model treated in Section \ref{nmodel}.  Therefore, since $N_1$ is supported by the set $K$, all results and formulas from this section apply to $T_\ns$. Let  us briefly recall some of them.

 First, $T_\ns$ is completely non-self-adjoint according to Proposition \ref{nsap} which explaines our notation $T_\ns$.  Formula  (\ref{n2n1}) yields
\begin{align*}
 N_2=C_1N_1C_2\quad \textrm{and}\quad  N_1= C_2N_2C_1.
\end{align*}
Next,  by Proposition \ref{Tinvertible} we have
\ $\cN(T_\ns-\lambda I)=\{0\}$  for all $ \lambda \in \dC.$  Even more, under the stronger condition $\sigma (N_1)\subseteq K$ the resolvent set $\rho(T_\ns)$ 
is whole complex plane $\dC$ and the resolvent of $T_\ns$ is given by the formulas (\ref{nc4}) and (\ref{nc5}).
Finally,
Proposition \ref{uniequi} is about the unitary equivalence of operators $T_\ns$ and Proposition \ref{irre}  characterizes when  $T_\ns$ is irreducible.

Next we state some useful formulas.
On the Hilbert space  $\Hh= \Hh_\sa\oplus \Hh_\ns\equiv \Hh_\ns\oplus(\Hh_1\oplus \Hh_2)$ we define a conjugation 
$$C =C_0\oplus \cC= C_0\oplus \left(
\begin{array}{ll}
 ~ 0 & C_2\\
C_1 &  ~ 0
\end{array}\right)$$
and a normal linear operator 
$$ N=S\oplus \cN= S\oplus \left(
\begin{array}{ll}
  N_1 & ~ 0\\
 ~ 0 & N_2
\end{array}\right).$$
 From $T_\sa =SC_0=C_0S$ and $T_\ns =\cN\cC=\cC\cN$ we obtain
\begin{align}\label{formNC}T =NC=CN=T_\sa\oplus T_\ns= SC_0\oplus \left(
\begin{array}{ll}
  ~ 0 & N_1C_2\\
N_2C_1 & ~0 
\end{array}\right).
\end{align}
Formula (\ref{formNC}) describes the structure of the antilinear normal operator $T$.

Since $N_1$ is supported by $K$, the spectral measure $E_N$ of the normal operator $N$ is supported by the closed right half plane $H:=\{z\in \dC: {\rm Re\, z}\, \geq 0\}$. Then
$$\Hh=\Hh_\sa\oplus \Hh_1\oplus \Hh_2=E_N([0,+\infty))\Hh\oplus E_N(K)\Hh\oplus E_N(K^c)\Hh$$ and $N$ is given by
\begin{align}
N=S \oplus N_1\oplus N_2\equiv \int_0^{+\infty}t\, dE_N(t) \oplus \int_K z\, dE_N(z) \oplus \int_{K^c} z\, dE_N(z).
\end{align}
The conjugation $C$ leaves $\Hh_\sa$ invariant and gives a bijection of $\Hh_1$ and $\Hh_2$.
Moreover, 
\begin{gather}\label{matrixtbt2}
(T_\ns)^\dagger =\left(
\begin{array}{ll}
~~~ 0 & N_1^*C_2\\
N_2^*C_1 & ~~~0
\end{array}\right), \quad (T_\ns)^2=\left(
\begin{array}{ll}
N_1^2 & ~~~0\\
0 & N_2^2
\end{array}\right), \\
  (T_\ns)^\dagger T_\ns =\left(
\begin{array}{ll}
N_1^*N_1 & ~~~0\\
~~~ 0 & N_2^*N_2
\end{array}\right),\quad | T_\ns |=\left(
\begin{array}{ll}
 |N_1| & ~~0\\
 ~~0 & |N_2|
\end{array}\right).\label{matrixtbt3}
\end{gather}
Since $T_\ns=\cN\cC=\cC\cN$ implies $T_\ns ^\dagger=\cN^*\cC=\cC\cN^*$, we compute for $k,n\in \dN_0$,
\begin{gather*}
((T_\ns)^\dagger)^k(T_\ns)^n =(\cN^*)^k\cN^n\cC^{k+n}=\left(
\begin{array}{ll}
(N_1^*)^kN_1^n & ~~~0\\
0 & (N_2)^*)^kN_2^n
\end{array}\right) \cC^{k+n} .
\end{gather*}
Note that $\cC^{k+n}=I_{\Hh_\ns }$ if $k+n$ is even and  $\cC^{k+n}=\cC$ if $k+n$ is odd.

Since $\cN=\cC\cN\cC$, it follows from Lemma \ref{auxilemma} that for the spectral measure $E_\cN$ of  $\cN$ and each Borel subset $M$ of $\dC$ we have
 \begin{align}\label{enc}\cC E_\cN (M)\cC=E_\cN(M^c).
\end{align}

Now we formulate some consequences of Theorem \ref{normalform}.
First we restate the special case of Theorem \ref{normalform} for antiunitaries.

\begin{thp}
An antilinear  operator $T$  on $\Hh$ is unitary if and only if $T$ is the form (\ref{formNC}), where $S=I_{\Hh_\sa}$ and $N_1$ and $N_2$ are  unitary linear operators on $\Hh_1$ and $\Hh_2$, respectively, such that $E_{N_1}$ is supported by $K_\dT$.
\end{thp}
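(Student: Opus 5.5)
We derive this from Theorem \ref{normalform}, reading ``unitary'' as antiunitary in the sense of Definition \ref{operatorclasses}; the point is that antiunitarity is equivalent to normality together with $|T|=I$. First, an antiunitary $T$ is bounded, everywhere defined and closed, and $\|Tx\|=\|x\|$ gives $T^\dagger T=I$ by polarization. Since $T$ is a bijection, $T^\dagger=T^{-1}$, so also $TT^\dagger=I$. Hence $T$ is antinormal with $|T|=I$. Conversely, if $T$ is closed, antinormal and $|T|=I$, then $\cD(T)=\cD(|T|)=\Hh$ and $\|Tx\|=\|x\|$ by (\ref{danorm}). Moreover $TT^\dagger=I$ shows that $T$ is surjective, so $T$ is antiunitary.

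\emph{Only if.} Let $T$ be antiunitary. By Theorem \ref{normalform}, $T=T_\sa\oplus T_\ns$ with $T_\sa=SC_0=C_0S$ and $T_\ns=\cN\cC=\cC\cN$, where $N_1$ is $K$-supported.
\begin{itemize}
\item On $\Hh_\sa$: $|T_\sa|^2=T_\sa^\dagger T_\sa=C_0S^2C_0=S^2$ (using $SC_0=C_0S$), so $|T_\sa|=S$. From $|T|=I$ and the direct sum decomposition, $S=I_{\Hh_\sa}$.
\item On $\Hh_\ns$: by (\ref{matrixtbt3}), $|T_\ns|=|N_1|\oplus|N_2|$, so $|N_1|=I$ and $|N_2|=I$. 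A normal operator with $|N_j|=I$ is unitary, since $N_j^*N_j=N_jN_j^*=I$.
\item Support: $E_{N_1}$ is supported by $\dT$ (because $|N_1|=I$) and by $K$ (by hypothesis). Hence it is supported by $K\cap\dT=K_\dT$.
\end{itemize}

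\emph{If.} Suppose $T$ has the form (\ref{formNC}) with $S=I$, $N_1,N_2$ unitary and $E_{N_1}$ supported by $K_\dT\subseteq K$. Then $N_1$ is $K$-supported, so by Theorem \ref{normalform} $T$ is antinormal. By (\ref{matrixtbt3}), $|T|=I\oplus|N_1|\oplus|N_2|=I$. The first paragraph then shows $T$ is antiunitary.

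The step needing most care is the converse in the first paragraph, namely that $|T|=I$ together with normality forces surjectivity. This is handled by $TT^\dagger=T^\dagger T=I$, with $T^\dagger$ everywhere defined because $|T^\dagger|=|T|=I$ by Lemma \ref{ttchara}.
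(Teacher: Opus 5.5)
Your proof is correct and follows the paper's route: you apply Theorem \ref{normalform} and check component by component that antiunitarity amounts to $S=I_{\Hh_\sa}$ and $N_1,N_2$ unitary. You also make explicit what the paper's two-line proof leaves as ``obvious'': that antiunitaries are exactly the closed antinormal operators with $|T|=I$, and that $E_{N_1}$ is supported by $K\cap\dT=K_\dT$.
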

\begin{proof}
Obviously, $T$ is an antiunitary if and only if $T_\sa$ and $T_\ns$ are.  Clearly, this holds if and only if $S=I_{\Hh_\sa}$ (by $S\geq 0$) and $N_1$ and $N_2$ are  unitaries.
\end{proof}
The next proposition deals with irreducible antilinear normal operators.
\begin{thp}
Each irreducible antilinear normal
 operator $T$ is unitarily equivalent to an operator of the following list. Two operators of this list are not unitarily equivalent.
\begin{itemize}
\item[\em (i)] $S_t=t$ defined by (\ref{defist}) on $\dC$ for some $t\in [0,+\infty)$.
\item[\em (ii)] $T_z$ defined by (\ref{defTz}) on $\dC^2$ for $z\in \dC$, $0<\arg z\leq \frac{\pi}{2}.$
\end{itemize}
\end{thp}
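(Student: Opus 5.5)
Let $T$ be an irreducible antilinear normal operator. By Theorem \ref{normalform}, $T=T_\sa\oplus T_\ns$ on $\Hh=\Hh_\sa\oplus\Hh_\ns$. Irreducibility forces one summand to be trivial, so there are two cases.

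\emph{Case $\Hh_\ns=\{0\}$.} Then $T=T_\sa=SC_0=C_0S$ is an irreducible antilinear self-adjoint operator. By the proposition on irreducible self-adjoint operators (the one containing (\ref{defist})), $T$ is unitarily equivalent to $S_t$ for a unique $t\in[0,+\infty)$.

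\emph{Case $\Hh_\sa=\{0\}$.} Then $T=T_\ns=\cN\cC$ has exactly the form studied in Section \ref{nmodel}, with $N_1$ $K$-supported. Proposition \ref{irre}(iii) shows that $T$ is unitarily equivalent to some $T_z$ with $0<\arg z\le\pi/2$. This settles existence.

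It remains to show that distinct entries of the list are not unitarily equivalent.

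\emph{Type (i) versus type (ii).} These cannot be equivalent because the underlying dimensions differ: $S_t$ acts on $\dC$, while $T_z$ acts on $\dC^2$.

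\emph{Two operators of type (i).} Suppose $S_t\cong S_{t'}$. Then $|S_t|=t$ and $|S_{t'}|=t'$ are unitarily equivalent, so $t=t'$.

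\emph{Two operators of type (ii).} Take $z,v$ with $0<\arg z,\arg v\le\pi/2$. Both lie in $\ov{K}$, so Lemma \ref{tzequivalence} gives that $T_z$ and $T_v$ are not unitarily equivalent when $z\ne v$.

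\textbf{Main obstacle.} The only delicate point is the reduction to a single summand in the first step. The decomposition (\ref{Tsumsansa}) is an orthogonal sum of operators on the nonzero spaces $\Hh_\sa$ and $\Hh_\ns$, so if both were nonzero, $T$ would be reducible in the sense of the definition of irreducibility. Both summands therefore cannot be present, and each case above inherits irreducibility from $T$.

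We also need that $T_z$ with $0<\arg z\le\pi/2$ is irreducible, so that the list consists only of irreducible operators. This follows from Proposition \ref{irre}(ii), since $z$ is not real.
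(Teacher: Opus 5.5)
Your proof is correct and takes the same route as the paper. You split $T=T_\sa\oplus T_\ns$ via Theorem \ref{normalform}, use irreducibility to rule out one summand, and then invoke the classification of irreducible antilinear self-adjoint operators and Proposition \ref{irre}(iii). You also spell out the non-equivalence part, which the paper's one-line proof leaves implicit: the dimension count, $|S_t|=t$, and Lemma \ref{tzequivalence}.
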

\begin{proof} Since $T=T_\sa\oplus T_\ns$ by (\ref{Tsumsansa}),  the assertion follows at once from Propositions \ref{irrselfadjoint} and
 \ref{irre}.
\end{proof}
Finally, we characterize the unitary equivalence of antinormal operators.
\begin{thp}
Let $T$ and $T'$ be two antinormal operators with corresponding operators $N_1, N_2, C_1, C_2, C_0$ and $N_1', N_2', C_1', C_2',C_0'$, respectively, as in Theorem \ref{normalform}.
Then $T$ and  $T'$ are unitarily equivalent if and only if there exists unitary operators $u_0,u_1, u_2$ of $\Hh_\sa, \Hh_1, \Hh_2$ on $\Hh_\sa', \Hh_1', \Hh_2'$, respectively, such that
\begin{align*}
u_0 Su_0^{-1} =S',\, u_1 N_1u_0^{-1} =N_1', \, u_2 N_2u_2^{-1} =N_2', u_0 C_0 u_0^{-1} =C_0', u_2 =C_1' u_1 C_2. 
\end{align*}
\end{thp}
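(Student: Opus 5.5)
The plan is to reduce unitary equivalence of $T$ and $T'$ to unitary equivalence of their self-adjoint and non-self-adjoint parts separately, and then to invoke Proposition \ref{uniequi} and the structure of antilinear self-adjoint operators. (I read the second condition of the statement as $u_1N_1u_1^{-1}=N_1'$, the $u_0^{-1}$ being a misprint.)

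\emph{Sufficiency.} This direction is a direct computation. Put $U:=u_0\oplus u_1\oplus u_2$. As in the proof of (ii)$\to$(i) of Proposition \ref{uniequi}, the relations for $u_1,u_2$ give $U\cN U^*=\cN'$ and $U\cC U^*=\cC'$, hence $UT_\ns U^*=T_\ns'$. On $\Hh_\sa$ we have $u_0SC_0u_0^{-1}=(u_0Su_0^{-1})(u_0C_0u_0^{-1})=S'C_0'$, i.e. $u_0T_\sa u_0^{-1}=T'_\sa$. By (\ref{formNC}) it follows that $UTU^*=T'$.

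\emph{Necessity.} Let $U$ be unitary with $UTU^*=T'$. Then $UT^2U^*=(T')^2$, so $U$ intertwines the spectral measures of the normal operators $T^2$ and $(T')^2$. By (\ref{formNC}), $T^2=S^2\oplus N_1^2\oplus N_2^2$. Here $S^2$ is supported by $[0,+\infty)$. As in the proof of Proposition \ref{nsap}, $N_1^2$ is supported by $\{0<\arg z\le\pi\}$ and $N_2^2$ by $\{\pi\le \arg z<2\pi\}$, so both are supported off $[0,+\infty)$. Hence $\Hh_\sa=E_{T^2}([0,+\infty))\Hh$ and likewise $\Hh_\sa'=E_{(T')^2}([0,+\infty))\Hh'$. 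Consequently $U$ maps $\Hh_\sa$ onto $\Hh_\sa'$ and $\Hh_\ns$ onto $\Hh_\ns'$, so $U=u_0\oplus U_\ns$ with $u_0T_\sa u_0^{-1}=T_\sa'$ and $U_\ns T_\ns U_\ns^*=T_\ns'$.

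Proposition \ref{uniequi}, which applies because $N_1$ and $N_1'$ are $K$-supported, now yields $u_1,u_2$ satisfying the three relations involving $N_1,N_2,C_1,C_2$.

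It remains to split $u_0T_\sa u_0^{-1}=T'_\sa$ into separate relations for $S$ and $C_0$.
\begin{itemize}
\item Since $u_0$ is unitary, $u_0T_\sa^\dagger T_\sa u_0^{-1}=(T_\sa')^\dagger T_\sa'$. Because $T_\sa^\dagger T_\sa=C_0S^2C_0=S^2$, this gives $u_0S^2u_0^{-1}=S'^2$, and taking positive square roots gives $u_0Su_0^{-1}=S'$.
\item Then $u_0SC_0u_0^{-1}=S'C_0'$ combined with the previous relation gives $S'(u_0C_0u_0^{-1})=S'C_0'$. So $u_0C_0u_0^{-1}=C_0'$ holds on $\ov{\cR(S')}=\cN(S')^\perp$.
\end{itemize}

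The main obstacle is the kernel $\cN(S)$, on which $C_0$ is arbitrary and not determined by $T$. There $u_0$ need not intertwine $C_0$ and $C_0'$ (take $T=0$). The fix is to modify $u_0$. Since $C_0$ commutes with $E_S$ by Theorem \ref{charselfadjoint}, it leaves $\cN(S)$ and $\cN(S)^\perp$ invariant. On $\cN(S)$, $u_0$ restricts to a unitary onto $\cN(S')$, and I would replace that restriction by a unitary $w$ with $wC_0w^{-1}=C_0'$. Such a $w$ exists because any two conjugations on Hilbert spaces of equal dimension are unitarily equivalent: map an orthonormal basis fixed by $C_0$ onto one fixed by $C_0'$. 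The modified $u_0$ still satisfies $u_0Su_0^{-1}=S'$, and it now intertwines $C_0$ with $C_0'$ on all of $\Hh_\sa$.

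Accordingly, the necessity direction proves the statement in the form ``there exist $u_0,u_1,u_2$'', with $u_0$ chosen suitably rather than simply taken as the block of $U$.
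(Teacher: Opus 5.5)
Your proof is correct and follows the paper's route: split $T$ into $T_\sa\oplus T_\ns$, handle $T_\ns$ by Proposition \ref{uniequi}, and handle $T_\sa$ through $S$ and $C_0$. The paper only asserts the splitting and the characterization for $T_\sa$ (``clearly''). Your two additions fill in what it omits: you justify the splitting via $\Hh_\sa=E_{T^2}([0,+\infty))\Hh$, and you note that $u_0$ must be modified on $\cN(S)$, where $C_0$ is not determined by $T$.
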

\begin{proof}
Clearly,  $T$ and $T'$ are unitary equivalent if and only their self-adjoint parts and their non-self-adjoint parts are. For the self-adjoint parts this holds if and only if $u_0 Su_0^{-1}=S'$, $ u_0 C_0 u_0^{-1} =C_0'$. For the non-self-adjoint  parts we can apply  Proposition \ref{uniequi}, because $N_1$ and $N_1'$ are $K$-supported by 
Theorem \ref{normalform}.
\end{proof}
\section{Proof of Theorem \ref{normalform}}\label{proof}
The following auxiliary lemma is used in the proof of Theorem \ref{normalform}.
 It is a slight generalization of a lemma in \cite{galindo}:

\begin{thl}\label{galindo}
Let $C$ be a conjugation or  an skew-conjugation on a Hilbert space $\cG$ and $S$ a bounded self-adjoint operator on $\cG$. Suppose   that  $CS=S C$ and 
\begin{align}\label{asscbotx}
\langle  CS^m x,x\rangle=0 ~~~\textrm{for all}~~x\in \cG, ~ m\in \dN_0.
\end{align} Then there exists a closed linear subspace $\cR$ of $\cG$ such that $S\cR\subseteq \cR$ and $$\cG=\cR\oplus C\cR.$$
\end{thl}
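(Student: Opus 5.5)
The plan is to build $\cR$ as an orthogonal sum of cyclic subspaces of $S$, using Zorn's lemma. Write $C^2=\varepsilon I$ with $\varepsilon=\pm1$. Since $C$ is antiunitary, $C^{-1}=\varepsilon C$ and $\langle Cu,Cw\rangle=\langle w,u\rangle$. Consider the form $B(u,v):=\langle Cu,v\rangle$ on $\cG$. It is continuous and conjugate-linear in each variable.

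\textbf{Step 1: the cyclic subspace of $x$ is orthogonal to its image under $C$.} For $x\in\cG$ let $Z(x)$ be the closed linear span of $\{S^mx: m\in\dN_0\}$. Since $S$ is self-adjoint and $CS=SC$, the hypothesis (\ref{asscbotx}) gives, for all $j,k\in\dN_0$,
\begin{align*}
B(S^jx,S^kx)=\langle CS^jx,S^kx\rangle=\langle S^kCS^jx,x\rangle=\langle CS^{j+k}x,x\rangle=0 .
\end{align*}
Because $B$ is conjugate-linear in both variables and continuous, $B$ vanishes on $Z(x)\times Z(x)$. This means exactly that $CZ(x)\perp Z(x)$. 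This is the step that needs care: the complex spans must be handled via the conjugate-linearity of $B$, and the closures via its continuity. Everything else is bookkeeping.

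\textbf{Step 2: $Z(x)\oplus CZ(x)$ reduces both $S$ and $C$.} Clearly $SZ(x)\subseteq Z(x)$. Also $CZ(x)=Z(Cx)$ is $S$-invariant, because $SC=CS$. Hence $M(x):=Z(x)\oplus CZ(x)$ is a closed subspace invariant under $S$. It is invariant under $C$ because $C^2Z(x)=\varepsilon Z(x)=Z(x)$. Since $S$ is self-adjoint and $C$ is antiunitary with $C^{-1}=\varepsilon C$, the orthogonal complement $M(x)^\bot$ is also invariant under $S$ and $C$. Indeed, for $y\perp M(x)$ and $u\in M(x)$ we have $\langle Cy,u\rangle=\langle C^{-1}u,y\rangle=0$. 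Consequently the restrictions of $S$ and $C$ to $M(x)^\bot$ again satisfy all hypotheses of the lemma.

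\textbf{Step 3: exhaust $\cG$ by Zorn's lemma.} Consider families $\{x_i\}_{i\in I}$ for which the subspaces $M(x_i)$ are nonzero and pairwise orthogonal, ordered by inclusion. Zorn's lemma gives a maximal family. Let $M$ be the closure of $\bigoplus_i M(x_i)$. If $M\neq\cG$, pick $0\neq x\in M^\bot$. By Step 2, $M^\bot$ is invariant under $S$ and $C$, so $M(x)\subseteq M^\bot$ and $M(x)\ni x$ is nonzero. This contradicts maximality. Hence $\cG=\bigoplus_i\big(Z(x_i)\oplus CZ(x_i)\big)$.

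Set $\cR:=\bigoplus_i Z(x_i)$, the closed orthogonal sum. Then $S\cR\subseteq\cR$. Since $C$ is isometric and bijective, it maps orthogonal sums to orthogonal sums, so $C\cR=\bigoplus_i CZ(x_i)$. The spaces $Z(x_i)$ and $CZ(x_j)$ are orthogonal: for $i\neq j$ because $M(x_i)\perp M(x_j)$, and for $i=j$ by Step 1. Therefore $\cG=\cR\oplus C\cR$, as claimed.
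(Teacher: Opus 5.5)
Your proof is correct and follows essentially the same route as the paper: Zorn's lemma, the cyclic-subspace identity $\langle CS^jx,S^kx\rangle=\langle CS^{j+k}x,x\rangle=0$, and the invariance of orthogonal complements under $S$ and $C$. The only difference is bookkeeping: you maximize over families of vectors with mutually orthogonal blocks $Z(x_i)\oplus CZ(x_i)$, whereas the paper maximizes over closed $S$-invariant subspaces $\cR$ with $\cR\perp C\cR$ and enlarges a maximal one by a cyclic subspace; your treatment of the closures and of the $C$-invariance of the complement is more careful than the paper's.
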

\begin{proof}
Let $\cS$ be the collection of closed subspaces $\cR$ of $\cG$ such that $\cR\bot C\cR$ and $S\cR\subseteq \cR$ . A straightforward application of Zorn's lemma shows that $\cS$ has a maximal element,  say $\cR_0$, with respect to the inclusion. Let  $\cG_0:=\cR_0\oplus C\cR_0.$ It suffices to show that $\cG_0=\cG$.

Suppose that $x\in \cG_0^\bot.$ Let $\cR_x$ denote the smallest $S$-invariant linear subspace which contains $x$ and let $\cG_1$ be the closure of $\cR_x+ \cR_0$. Clearly, $\cR_x$ and $\cG_1$ are invariant under $S$. We show that $\cR_1\bot C\cR_1$. For this it suffices to prove that 
\begin{align}\label{gbotg}
\cR_x+\cR_0\, \bot \, (C\cR_x+C \cR_0).
\end{align}

Clearly, $\cR_0\bot C\cR_0$, because $\cR_0\in \cS$. Since $SC=CS$ and $\cR_0$ is $S$-invariant, $C\cR_0$ is $S$-invariant as well. Thus, $\cG_1$ is $S$-invariant. Since $S$ is self-adjoint and bounded, $\cG_1^\bot$ is also $S$-invariant.  Hence, since $x\in \cG_0^\bot$, $\cR_x\subseteq \cR_0^\bot$, so that  $\cR_x\bot \cR_0$ and $\cR_x\bot C\cR_0$. Therefore, since $C$ is a conjugation or an anto-conjugation, $C\cR_x\bot \cR_0$.

 It remains to prove that $C\cR_x\bot \cR_x$. Using the assumptions on $S$, especially (\ref{asscbotx}), we derive
$$
\langle CS^kx,S^nx\rangle =\langle S^n CS^k x,x\rangle =\langle CS^{k+n}x,x\rangle =0,
$$
so that $CS^k x\bot S^n x$ for $k,n\in \dN_0$. 
This implies that $C\cR_x\bot \cR_x$.

In the preceding two paragraphs we have proved that (\ref{gbotg}) holds. Therefore, $\cR_1\in \cS$ and $\cR_0\subseteq \cR_1.$ From the maximality of $\cR_0$ it follows that  $\cR_0=\cR_1$. Hence $\cG_0^\bot=\{0\}$, that is, $ \cR_0\oplus C\cR_0\equiv \cG_0=\cG$.
\end{proof}

The implication (ii)$\to$(i) of Theorem \ref{normalform} is easily verified.
Now we begin with the proof of the main implication (i)$\to$(ii).

By Theorem \ref{charnpolar}, $T^2$ is a normal linear operator and there exist an antiunitary operator $V_T$ and a positive self-adjoint linear operator $|T|$ on $\Hh$ such that
\begin{align}\label{tsquared}
T^2=V_T^2 |T|^2=|T|^2V_T^2 .
\end{align}
Since $V_T$ is  an antilinear unitary operator, $V_T^2$ is a unitary linear operator on $\Hh$. 

Our first main aim is to  desribe the structure of the antiunitary operator $V_T$, that is, we treat the case  $T=V_T$. The crucial ingredient is the spectral theory of the linear unitary  operator $V_T^2$.
By the spectral theorem  there is a spectral measure 
$E_{V_T^2}$ on $\dT$ such that 
\begin{align}
V_T^2=\int_{\dT}  z\, dE_{V_T^2}(z).\label{spectralvt2}
\end{align}

 Let $\lambda \in \rho(V_T^2)$. Using that $V_T$ is antilinear  we derive
\begin{align*}
V_T(V_T^2-\lambda I)x=(V_T^2-\ov{\lambda} I)V_T x
\end{align*}
 for all $x\in \Hh$. Setting $y=(V_T^2-\lambda I)x$ we obtain
\begin{align}\label{tresolvent}
(V_T^2-\ov{\lambda} I)^{-1} V_Ty=V_T(V_T^2-\lambda I)^{-1} y \quad \textrm{for all}~~y\in \Hh, \,\lambda \in \rho(V_T^2).
\end{align}
Recall that $M^c:= \{ \ov{z}: z\in M\}$ denotes the complex onjugate of a set $M$.
From (\ref{tresolvent}) it follows that for each Borel subset $M$ of $\dT$ we have
\begin{align}\label{EVA2}E_{V_T^2}(M^c )\, V_T=V_T E_{V_T^2}(M).
\end{align}

First we treat the restriction of $V_T$ to the subspace $$\cG:=\cN(V_T^2+I)= \E( \{-1 \})\Hh.$$ The map $J:= V_T\lceil \cG$ of the Hilbert space $\cG$ into itself is antilinear and isometric. Since $J^2x=V_T^2x=-x$ for $x\in \cG$,  we have $J^2=-I_\cG$, that is, $J$ is a skew-conjugation on $\cG$. 

For later use  we suppose that $S$ is a bounded self-adjoint linear operator on $\cG$ which commutes with $J$. When we treat the case of a general antilinear normal operator in the later stages of the proof  this operator $S$ will be specified. 
Using that $V_T$ is antiunitary we derive
\begin{align*}
\langle & JS^m x,x\rangle=\langle Jx,J^2S^m x\rangle=\langle Jx,S^mJ^2x\rangle\\ &=\langle Jx,-S^m x
\rangle =- \langle S^mJx,x\rangle =- \langle JS^mx,x\rangle,
\end{align*}
so that $\langle JS^mx,x\rangle=0$ for $x\in \cG$ and $m\in \dN_0$.
This shows that the assumptions of Lemma \ref{galindo} are satisfied. By this result there exists a closed linear subspace $\cR$
of $\cG$ such that $S\cR\subseteq \cR$,  $\cR\, \bot\, J\cR$ and $\cG=\cR\oplus J \cR$. We define closed linear subspaces  $\cK_1$ and $\cK_2$  of $\cG$ by
\begin{align}\label{defk1k2}
\cK_1:=J\cR\equiv  V_T\cR, \quad \cK_2:=\cR.
\end{align}
Then  $\cG\equiv \cN(V_T^2+I)=\cK_1\oplus \cK_2$. Clearly,
$V_T:\cK_1\to \cK_2$ and $V_T:\cK_2\to \cK_1$.

Now we define 
\begin{align}\label{defhsahnsa}
\Hh_\sa :=E_{V_T^2}(\{ 1\})\Hh~~ \textrm{and}~~\Hh_\ns:=E_{V_T^2}(\dT \backslash \{1\})\Hh.
\end{align}
Since we consider the case $T=V_T$, we then have $\Hh=\Hh_\sa \oplus \Hh_\ns.$

Let us abbreviate
$$C_+:=\{z\in \dT:0<\arg z<\pi\},~~ C_-:=\{z\in \dT:\pi<\arg z<2\pi\}.$$ 
We  define two  unitary linear operators  $U_1$ and $U_2$  on the Hilbert subspaces 
\begin{align} \Hh_1 :=\E(C_+)& \Hh\oplus  \cK_1,~ \Hh_2:=\E(C_-)\Hh\oplus \cK_2,
\end{align}
respectively,  of $\Hh_\ns$ by
\begin{align} U_1:=&\int_{C_+}  z_+^{1/2} \, d\E(z)\oplus {\rm i}\,I_{\cK_1},\label{u1} \\ U_2:=&\int_{C_-}  z_+^{1/2} \, d\E(z)\oplus (- {\rm i})I_{\cK_2},\label{u2} ,
\end{align}
where $z_+^{1/2}$ denotes the square root of $z\in \dC\backslash \dR$ which is uniquely determined by the property that $\sign~ \Imt\, z =\sign~ \Imt\, z_+^{1/2}$. 
Clearly, $\Hh_1\bot \Hh_2$  and since $\cK_1\oplus \cK_2=\cN(V_T^2+I)=\E(\{-1\})\Hh$, 
we have  $$\Hh_\ns =\Hh_1\oplus \Hh_2.$$

Let $U_1'$ and $U_2'$ denote the operator integrals in (\ref{u1}) and (\ref{u2}) acting on the corresponding Hilbert subspaces $\Hh_1'$ and $\Hh_2'$, respectively.

 From the transformation formula of spectral measures (see e.g. \cite[Proposition 4.24]{schm12}) it follows that the spectral measure of the unitary operator $U_1'$ on $\Hh_1'$ is supported by the set $\{z\in \dT:0<\arg z<\pi/2\}$. Since $U_1\lceil \cK_1=
{\rm i}\, I_{\cK_1}$, $U_1$ is supported  by  $\{z\in \dT:0<\arg z\leq \pi/2\}=K_\dT$.

Let $x\in \Hh_1'$. We consider  the integral $U_1'x$ as a limit of sums of terms  $(z_j)_+^{1/2} E_{V_T^2}(M_j)x$, with $z_j\in C_+$ and $M_j\subseteq C_+$.  Using (\ref{EVA2}) and the fact that $V_T$ is antilinear 
 we obtain   
\begin{align*}
V_T  \big((z_j)_+^{1/2} E_{V_T^2}(M_j)\big) x=\big( \ov{z_j}_+^{1/2} E_{V_T^2}(M_j^c )\big) V_T x.
\end{align*}
Since $\ov{z_j}_+^{1/2}  E_{V_T^2}(M_j^c )V_T x$ are  the corresponding terms of a sum for the integral $U_2'(V_T x)$, we conclude that $V_Tx\in \Hh_2'$ and $V_TU_1'x=U_2' V_T x$ for $x\in \Hh_1'$.
Further, $V_T$
maps  ${\rm i}x \in \cK_1$ to $(-{\rm i}) V_1x\in \cK_2$ for $x\in \cK_1$. Therefore $V_T:\Hh_1\to \Hh_2$ and
\begin{align}\label{uvrelation1}
V_TU_1x=U_2V_Tx, \quad x\in \Hh_1.
\end{align}
A similar reasoning yields that $V_T:\Hh_2\to \Hh_1$ and
\begin{align}\label{uvrelation2}
V_TU_2y=U_1V_Ty, \quad y\in \Hh_2.
\end{align}

Let $V_\ns$ denote the restriction of $V_T$ to $\Hh_\ns$. Then $V_\ns:\Hh_\ns\to \Hh_\ns$ and 
  the antlinear operator $V_\ns$ on $\Hh_\ns$
 is given by a operator block matrix
\begin{gather} 
V_\ns=\left(
\begin{array}{ll}
~0 & V_2\\
V_1 & ~0
\end{array}\right).
\end{gather}
From their definitions it is clear that $U_1$ and $U_2$ are linear unitary operators on the Hilbert spaces $\Hh_1$ and $\Hh_2$, respectively.
Let $U$ denote the unitary linear operator on $\Hh_\ns=\Hh_1\oplus \Hh_2$ defined by the  block matrix
\begin{gather}\label{defiU}
U=\left(
\begin{array}{ll}
U_1 & ~ 0\\
~ 0 & U_2
\end{array}\right).
\end{gather}
From (\ref{uvrelation1}) and (\ref{uvrelation2}) we obtain
\begin{align}\label{uvrelation3}
V_\ns U=UV_\ns.
\end{align}

Next we  verify that
\begin{align}\label{vnsu2}
V_\ns^2 =U^2.
\end{align}
Indeed, since $V_T^2x=-x$ for $x\in \cK_1$, it follows from the definitions of $U_1$ and $\Hh_1$ and the spectral decomposition (\ref{spectralvt2}) of $V_T^2$ that 
\begin{align}
U_1^2=\int_{C_+}  z \, d\E(z)\oplus ( -I_{\cK_1}) =V_T^2\lceil \Hh_1=V_\ns^2\lceil \Hh_1.
\end{align}
Similarly, $U_2^2=V_\ns^2\lceil \Hh_2$. Since $\Hh_\ns=\Hh_1\oplus \Hh_2$,  these equalities imply (\ref{vnsu2}).

Define
\begin{gather} 
\cC:= V_\ns U^{-1}=\left(
\begin{array}{ll}
~~ 0 & V_2U_2^{-1}\\
V_1U_1^{-1} & ~~ 0
\end{array}\right).
\end{gather}
Using that $V_\ns U^{-1}=U^{-1}V_\ns$ by (\ref{uvrelation3}) and (\ref{vnsu2})  we derive 
$$\cC^2=V_\ns U^{-1}V_\ns U^{-1}=V_\ns^2 U^{-2}= I_{\Hh_\ns}.$$
Further, since $V_\ns $ is antiunitary and $U^{-1}$ is unitary, $\cC$ is antilinear and we have
$\|\cC x\|=\|V_\ns U^{-1}x\|=\|U^{-1}x\|=\|x\|$ for $x\in \Hh_\ns$, so $\cC$ is isometric. Hence $\cC$ is a conjugation on the Hilbert space $\Hh_\ns$ such that the operators $V_\ns$, $U$,  $\cC$ pairwise commute and
\begin{align}\label{vnsnsuc}
V_\ns= U\cC=\cC U.
\end{align}
This proves the assertion for $V_\ns$  in the case $T=V_T$, that is, for the special case of an antiunitary operator.\smallskip

Now we show the assertion for $T_\ns$  in the case of a general antinormal operator $T$. Recall from Theorem  \ref{charnpolar} that the positive self-adjoint operator
 $|T|$ commutes with $V_T$ and so with $V_T^2$, because $T$ is antinormal. Hence $|T|$ commutes with the spectral projections of $V_T^2$, that is, $$E_{V_T^2}(M)\, |T|\subseteq|T| E_{V_T^2}(M)$$ for each Borel subset $M$ of $ \dT$. Therefore, the closed subspaces $\Hh_\sa =E_{V_T^2}(\{ 1\})\Hh$,  
$\Hh_\ns=E_{V_T^2}(\dT \backslash \{1\})\Hh$,   $E_{V_T^2}(C_+)\Hh$,\, and $E_{V_T^2}(C_-)\Hh$ reduce the self-adjoint operator $|T|$.

Also, the closed subspace $\cG := E_{V_T^2}(\{-1\})\Hh$ reduces the operator $|T|$. Let $S$ denote the inverse of the operator $(I+|T|)\lceil \cG$ acting on the Hilbert space $\cG$.
Since $V_T$ commutes with $|T|$,  $J:=V_T\lceil \cG$ commutes with the  bounded self-adjoint operator $S$ on $\cG$.  Therefore,  Lemma \ref{galindo} applies and  the  correponding subspace $\cR$ is invariant under $S$. Hence the closed  subspaces $\cK_1$ and $\cK_2$ of $\cG$ defined by (\ref{defk1k2}) are invariant under $S$. This implies that $\cK_1$ and $\cK_2$ reduce the operator $|T|$. Hence $\Hh_1=E_{V_T^2}(C_+)\Hh\oplus \cK_1$ and $\Hh_2=E_{V_T^2}(C_-)\Hh\oplus \cK_2$ reduce $|T|$. From the preceding we conclude that 
$$
|T|\lceil \Hh_\ns= |T|\lceil \Hh_1\oplus | T|\lceil \Hh_2~~~\textrm{on}~~ \Hh_\ns=\Hh_1\oplus \Hh_2.
$$
Let us abbreviate $|T|_\ns:=|T|\lceil \Hh_\ns, |T|_1:=|T|\lceil \Hh_1, |T|_2:= | T|\lceil \Hh_2.$ Then
\begin{gather} 
|T|_\ns= \left(
\begin{array}{ll}
|T|_1 & ~0\\
~0 & |T|_2
\end{array}\right).
\end{gather}
Since $|T|$ commutes with the spectral projections $E_{V_T^2}(\cdot)$, it also commutes with the spectral integrals $U_1'$ and $U_2'$. Obviously, $|T|$ commutes with ${\rm i}\, I_{\cK_1}$ and $-{\rm i}\, I_{\cK_2}$, so it  commutes with the operators $U_1$ and $U_2$. Therefore, the block matrix $|T|_\ns$  commutes with the unitary operator block matrix $U$ defined by (\ref{defiU}), that is, $U|T|_\ns \subseteq |T|_\ns U.$ 
Hence $U|T|_\ns U^*\subseteq |T|_\ns.$ Since $U|T|_\ns U^*$ is self-adjoint, we get
$U|T|_\ns U^*=|T|_\ns$. Then $|T|_\ns =U^*|T|_\ns U$.  Thus, 
\begin{align}\label{comutns}
U|T|_\ns = |T|_\ns U~~ \textrm{and}~~ U^*|T|_\ns = |T|_\ns U^*.
\end{align}
Define $N_1:=U_1|T|_1$,~  $N_2:=U_2|T|_2$, and
\begin{gather} 
\cN:= \left(
\begin{array}{ll}
N_1 & ~0\\
~0 & N_2
\end{array}\right) =U|T|_\ns =|T|_\ns U,
\end{gather}
where the last equality follows from  (\ref{comutns}). Since $T=V_T|T|=|T|V_T$  and $V_\ns=U\cC =\cC U$ by (\ref{vnsnsuc}),  we conclude that 
$T_\ns= |T|_\ns V_\ns =|T|_\ns U \cC =\cN \cC$ and $T_\ns =V_\ns |T|_\ns =\cC U |T|_\ns =\cC \cN.$

Now we show that the linear operators $N_1$ and $N_2$ are normal. Using  (\ref{comutns}) we derive
\begin{align}\label{n1n2normal}\cN^*\cN=|T|_\ns U^*U|T|_\ns=|T|_\ns^2,~~
\cN \cN^*=|T|_\ns U U^*|T|_\ns =|T|_\ns^2.
\end{align}
Hence the diagonal operator block matrix $\cN$ is normal and so are their diagonal entries $N_1$ and $N_2$.

Next we prove that $N_1$ is supported by the set $K$.  Since $N_1=U_1|T|_1$ and the unitary operator $U_1$  and the positive self-adjoint operator $|T|_1$ commute, using the product property of  the spectral measure $E_{N_1}$ of the normal operator $N_1$ we obtain
$$E_{N_1}(\dC\backslash K)=E_{N_1}((\dT\backslash K_{\dT})\times [0,+\infty))=E_{U_1}(\dT\backslash K_\dT)E_{|T|_1}([0,+\infty)).$$
Since $U_1$ is supported by $K_\dT$ as shown above,  $E_{U_1}(\dT\backslash K_\dT)=0$ and hence $E_{N_1}(\dC\backslash K)=0$, that is, $N_1$ is supported by $K$.

The formulas (\ref{matrixtbt2}) and (\ref{matrixtbt3}) follow by straightforward computations. Note that (\ref{n1n2normal}) is just  the first formula of   (\ref{matrixtbt3}). 
All assertions about $T_\ns$ are proved. \smallskip

Now we turn to the case of $T_\sa$ which is much simpler. Recall that $ \Hh_\sa =E_{V_T^2}(\{ 1\})\Hh$ by (\ref{defhsahnsa}). From (\ref{EVA2}) it follows that $V_T$ leaves $\Hh_\sa$ invariant.  Since $V_T^2 x=x$ for $x\in \Hh_\sa$ and $V_T$ is antiunitary, $C_0:=V_T\lceil \Hh_\sa$ is a conjugation and $S:=|T|\lceil \Hh_\sa$ is a positive self-adjoint operator on $\Hh_\sa$. Since $T=V_T |T|=|T|V_T$, we have $T_\sa=SC_0=C_0S,$ see also Theorem \ref{charselfadjoint}.

This completes the proof of Theorem \ref{normalform}.
\bibliographystyle{plain}

\end{document}